\newcommand{\NN}{\mathbb{N}}
\newcommand{\TT}{\mathbb{T}}
\newcommand{\RR}{\mathbb{R}}
\newcommand{\DD}{\mathbb{D}}
\newtheorem{theorem}{Theorem}[section]
\newtheorem{proposition}{Proposition}[section]
\newtheorem{lemma}{Lemma}[section]
\newtheorem{remark}{Remark}[section]
\newcommand{\dd}{\mathrm{d}}
\begin{document}

\title[Time periodic solutions for gSQG equation in the disc]{Emergence of time periodic solutions for the generalized surface quasi-geostrophic equation in the disc}
\author{Taoufik Hmidi}
\address{IRMAR, Universit\'e de Rennes 1, Campus de Beaulieu, 35042 Rennes cedex, France}
\email{thmidi@univ-rennes1.fr}
\author{Liutang Xue}
\address{Laboratory of Mathematics and Complex Systems (MOE), School of Mathematical Sciences, Beijing Normal University, Beijing 100875, P.R. China}
\email{xuelt@bnu.edu.cn}
\author{Zhilong Xue}
\address{Laboratory of Mathematics and Complex Systems (MOE), School of Mathematical Sciences, Beijing Normal University, Beijing 100875, P.R. China}
\email{zhilongxue@mail.bnu.edu.cn}

{\thanks{T. Hmidi has been supported by Tamkeen under the NYU Abu Dhabi Research Institute grant.
L. Xue has been partially supported by National Key Research and Development Program of China (No. 2020YFA0712900) and National Natural Science Foundation of China (Nos. 11771043, 12271045).}}

\subjclass[2010]{ 35Q35, 35Q86, 76U05, 35B32, 35P30}
\keywords{Generalized surface quasi geostrophic equation, periodic solutions, bifurcation theory, Green functions}

\maketitle
\begin{abstract}
In this paper we address the existence of time periodic solutions for the generalized inviscid SQG equation in the unit disc  with homogeneous Dirichlet boundary condition when  $\alpha\in (0,1)$. We show the existence of a countable family of bifurcating curves from the radial patches.  In contrast with the preceding studies in active scalar equations,  the Green function  is no longer  explicit  and we circumvent this issue by  a suitable splitting  into a singular explicit part (which coincides with the planar one) and a smooth implicit  one induced by the boundary of the domain. Another problem is connected to the analysis of the linear frequencies which admit a complicated form through a discrete sum involving  Bessel functions and their zeros. We overcome this difficulty by using Sneddon's formula leading to a suitable integral representation of the  frequencies.     
\end{abstract}

\tableofcontents

\section{Introduction}
In this paper we investigate some special structures of the vortical motions for the inviscid generalized surface quasi-geostrophic (abbr. gSQG) equation in the unit disc $\mathbb{D}\subset \mathbb{R}^2$. This model describes the evolution of the potential temperature $\theta$
governed by the transport equation,
\begin{equation}\label{Eq-gSQG}
\left\{ \begin{array}{ll}
  \partial_{t}\theta+u\cdot\nabla\theta=0,\qquad\quad(t,x)\in[0,\infty) \times\mathbb{D}, &\\
  u=-\nabla^\perp(-\Delta)^{-1+\frac{\alpha}{2}}\theta, \\
  \theta_{|t=0}(x) = \theta_0(x).
\end{array} \right.
\end{equation}
Here $u=(u_1,u_2)$ refers to the velocity field, $\nabla^{\perp}=(-\partial_2,\partial_1)$,
$\alpha\in [0,1)$ is a real parameter. The fractional Laplacian operator $(-\Delta)^{-1+\frac{\alpha}{2}}$
is defined via the eigenfunction expansion of the Laplacian in $\DD$ with homogeneous Dirichlet boundary condition
(see \eqref{Kern-For0}-\eqref{Kern-For} below). This model was introduced in \cite{CFMR} for  the flat case $\RR^2$ as an interpolation between 2D Euler equation and the surface quasi-geostrophic (abbr. SQG) model, corresponding to $\alpha=0$ and $\alpha=1$ in \eqref{Eq-gSQG}, respectively.
Notice that the SQG equation was used as a simplified model to track the atmospheric circulation near the the tropopause  \cite{Juck94,HPGS} and the ocean dynamics in the upper layers \cite{LK06}. A strong analogy  with the vorticity formulation of the  3D incompressible Euler equations was discussed in \cite{CMT94}.

These aforementioned active scalar equations have attracted a lot of attention in the past decades and important progress has been settled in various directions. As to
the local well-posedness of classical solutions in the whole space, it was  performed in various function spaces. For instance, we  refer to \cite{CCCGW} where the solutions are constructed in the framework of Sobolev spaces. The global well-posedness issue is still  open except for the Euler case $\alpha=0$. However,
 $L^2$-weak solutions in the whole space  are known to be global, see
\cite{Resnick,Mar08,LX19}. The nonuniqueness of weak solutions to SQG equation  has been explored recently  in \cite{Buck, Isett21}.
Another class of solutions widely discussed in the literature  is described by the patches where the initial data takes the form  of the characteristic function of a smooth bounded domain $D,$ that is, $\theta_0={\bf{1}}_{D}.$ In this case, the patch  structure is  preserved for a short time  and the boundary evolves according to a suitable contour dynamics equation, see \cite{CCCGW,Rod05,Gan08}.
Similar studies have been achieved  for a half plane
\cite{KRYZ,KYZ17,GanP21} and for any   smooth bounded \mbox{domain \cite{Kiselev19}.}
The global in time persistence of the boundary regularity is only known for the case $\alpha=0$
according to Chemin's  result \cite{Chemin93}, see also Bertozzi and Constantin \cite{BerC93} for another proof.
Notice that  some numerical experiments show strong evidence for the singularity formation in finite time,
see for instance \cite{CFMR,SD14,SD19}. 
For the patch problem associated to  gSQG equation, a  finite-time singularity result with multi-signed patches has been established by Kiselev et al \cite{KRYZ} for the case $0<\alpha < \frac{1}{12}$
and Gancedo et al \cite{GanP21} for $0<\alpha <\frac{1}{3}$.

The analysis of SQG type equations  in bounded smooth domains  is much involved than the flat case due in part to the  Green function which is not explicit. This study    was initiated by Constantin and Ignatova \cite{CI16a,CI16b}.
They considered the SQG equation with critical dissipation and obtained the global existence of $L^2$-weak solutions with a global Lipschitz a priori interior estimates. We also refer to the papers  \cite{CI20,Ign19,SV20} for  more results and discussions.
Concerning  the inviscid model \eqref{Eq-gSQG} in smooth bounded domains, the $L^2$ global weak solutions was constructed by Constantin and Nguyen \cite{ConN18b}
for the SQG case $\alpha=1$,
and later generalized in \cite{NHQ18} to the case $\alpha\in (1, 2)$ with more singular constitutive law in the velocity.
The local well-posedness issue  in the framework of classical solutions for the inviscid SQG equation (in bounded smooth domains) was performed in \cite{ConN18a}. We point out that
with  some slight modification, the results of \cite{ConN18a,ConN18b,NHQ18}
can be extended to the gSQG equation with $\alpha\in (0,1)$.

The aim of this work is to construct time periodic  solutions in the patch form  for  the  gSQG model in bounded smooth domains.
We will in particular focus on the class of V-states or rotating patches,
whose dynamics is described by a rigid body transformation. In this setting,
the problem reduces to finding some domains $D$ subject to uniform rotation around their centers of mass.
Observe that during the motion the support $D_t$ of the  patch solution  does not change the shapes and is determined by
$D_t = \mathbf{R}_{x_0,\Omega t} \,D$, where $ \mathbf{R}_{x_0,\Omega t}$ denotes the planar rotation with center $x_0$
and angle $\Omega t$. The parameter $\Omega \in \mathbb{R}$ is called the angular velocity of the rotating domain.


The V-states problem has a long history and it is still  the subject of an intensive research, and many important contributions have been achieved in the  last few decades at the analytical and numerical levels. The first example of rotating patches  for the 2D Euler equation in the plane  was discovered by Kirchhoff \cite{Kirch} who proved that an ellipse of semi-axes $a$ and $b$ rotates perpetually with the uniform angular velocity
$\Omega = \tfrac{ab}{(a+b)^2}$, we also refer to \cite[p. 232]{Lamb} and \cite[p. 304]{MB02}.
One century later, Deem and Zabusky \cite{DZ78} 
gave numerical evidence of the existence of  implicit V-states with  $m$-fold symmetry.
Afterwards, \mbox{Burbea \cite{Burbea82}} provided  an analytical proof of this fact using local bifurcation theory and conformal parametrization.  In particular, he proved that for each symmetry $m\geqslant2$ a curve of non trivial V-states bifurcates from Rankine vorticity (the radial shape) at  the angular velocities $\Omega = \frac{m-1}{2m}$.
See also Hmidi, Mateu and Verdera \cite{HMV13}, where the $C^{\infty}$ boundary regularity and the convexity of these bifurcated V-states are established.
Real analyticity of the boundary was further obtained by Castro, C\'ordoba and G\'omez-Serrano \cite{CCG16b}.
Hassainia, Masmoudi and Wheeler in \cite{HMW20}
 studied through some global bifurcation arguments the analyticity of the V-states along the whole  bifurcating branches. Besides the preceding results,  several  families  of V-states with different topological structures were recently explored. For instance, it was shown in \cite{CCG16b,HM16} that  a second family of countable branches bifurcate  from   Kirchhoff's ellipses. Rotating patches with only one hole exist near the annulus as proved in
 \cite{DHMV16,HM16b}, concentrated multi vortices centered at regular n-gons or distributed according to suitable periodic spatial  patterns are analyzed in \cite{HM17,Hassainia-Mil,Garcia20,Garcia21}. The  study of V-states in radial domains was performed for Euler equation in   \cite{DHHM}.
In particular, De la Hoz, Hassainia, Hmidi and  Mateu  \cite{DHHM} proved the existence of
$m$-fold symmetric V-states for the Euler equation in the unit disc, which bifurcates from the trivial solution
${\bf{1}}_{b \DD}$ at the angular velocity $\Omega = \frac{m-1+b^{2m}}{2m}$ for any  $m\geqslant 1$ and $b\in (0,1)$.

The existence of the V-states for the gSQG equation \eqref{Eq-gSQG} starts with the work of  Hassainia and Hmidi \cite{HH15}  where they showed  similar results to Burbea curves in the whole plane and for  $\alpha\in (0,1)$. Later, Castro, C\'ordoba and G\'omez-Serrano \cite{CCG16} proved the existence of V-states for the range $\alpha\in [1,2)$ and obtained  the $C^{\infty}$-regularity of their boundary for all $\alpha\in (0,2),$ see also \cite{CCG16b} for the real analyticity of the patch boundary.
For other connected topics we refer to the papers \cite{Ao,CQZZ,CCG16,DHH16,DHHM,Gom19,Gravejat,HM17}  and the references therein.
\vskip1mm

In this paper, we shall focus on the existence of the V-states for the gSQG equation \eqref{Eq-gSQG}  with $\alpha\in(0,1)$ in the unit disc $\DD$. More precisely, we want to construct rigid periodic solutions  around radial stationary patches ${\bf{1}}_{b \DD}, b\in(0,1)$  using bifurcation tools.  We remind that the case $\alpha=0$ was discussed in \cite{DHHM}. 
As a by-product  we construct an  infinite family of non-stationary global solutions  for the gSQG equation in the bounded  domain $\DD$, although the global well-posedness/blow up issue is not well understood and remains an open problem. The situation in bounded domains turns out to be  more tricky  due to the non explicit form of Green function associated to the spectral fractional  Laplacian. This has an impact in the study of the regularity of the functional that will describe the V-states. Later,  in Section \ref{sec:Time periodic motion of interfaces}
we shall explore   how to recover the boundary equation  of the  V-states close to the patch ${\bf{1}}_{b\mathbb{D}}$ in terms of  polar coordinates   $\theta\in\RR\mapsto \sqrt{b^2+2r(\theta)}e^{i\theta}$. In this regard,
the deformation radius  $r$ solves a nonlinear integro-differential equation of the type,
\begin{equation}\label{Equa-Nonlin}
  F(\Omega,r(\theta))\triangleq \Omega\, r'(\theta)+ \partial_\theta
  \bigg(\int_0^{2\pi} \int_0^{R(\eta)}K^\alpha\big(R(\theta)e^{i\theta},\rho e^{i\eta} \big) \rho\,
  \dd\rho \dd\eta\bigg)=0 ,\quad R(\eta)\triangleq \sqrt{b^2 + 2r(\eta)},
\end{equation}
where $K^{\alpha}$ stands for  the Green  function of  the spectral fractional Laplacian
$(-\Delta)^{-1+\tfrac{\alpha}{2}}$ on the unit disc $\DD$, defined via the relation
\begin{equation*}
	(-\Delta)^{-1+\frac{\alpha}{2}}f(x)=\int_{\mathbb{D}}K^{\alpha}(x,y)f(y)\dd y.
\end{equation*}
One may easily verify that $F(\Omega,0)=0$ for every $\Omega\in\RR$ and therefore the next step is to check that the bifurcation tools such as the Crandall-Rabinowitz theorem \cite{C-R71} applies in this framework. %
To state our main result, we shall introduce the following angular velocities,
\begin{align}\label{def:Omg-m-alp}
  \Omega_{m,b}^\alpha \triangleq 2\sum_{ k\geqslant 1}x_{0,k}^{\alpha-2}\frac{J_1^2\big(x_{0,k} b \big)}{J_1^2(x_{0,k} )}
  -2\sum_{k\geqslant 1}x_{m,k}^{\alpha-2}\frac{J_m^2(x_{m,k}b)}{J_{m+1}^2(x_{m,k})},
\end{align}
where $J_m$ denotes the Bessel functions of order $m$ and $x_{m,k}$ denotes the $k$-th positive root of $J_m(x)=0$, see Section \ref{Sect-Bess} for more details. Now, we are in a position to give our main result.

\begin{theorem}\label{main-theorem}
 Let  $(\alpha,b,m)$ satisfy one of the following conditions
\begin{align}
  & \alpha\in \,(0,1),\; b\in\, \left(0, (\tfrac{1-\alpha}{2-\alpha/2})^{\frac{1}{2}}\right],\;m\geqslant 1,
  \label{case1} \\
  & \alpha\in \,(0,1),\; b\in \,(0,1), \; m \geqslant m^*, \label{case2} \\
  & \alpha\in \,(0,\alpha^*),\;b\in \,(0,1),\; m\geqslant 1, \label{case3}
\end{align}
with $m^*=m^*(\alpha,b)\in\NN$
$($a rough bound is $m^* \leqslant \frac{1}{\log b} \big(\log \frac{1-\alpha}{1-\alpha/2- (e\log b)^{-1}}\big))$
and $\alpha^* =\alpha^*(b)>0$ a small number. Then there exists a family of m-fold symmetric $V$-states $(V_m)_{m\geqslant  1}$
for the gSQG equation \eqref{Eq-gSQG} bifurcating from the trivial solution $\theta_0={\bf{1}}_{b\mathbb{D}}$
at the angular velocity $\Omega_{m,b}^\alpha$ given by \eqref{def:Omg-m-alp}.
In addition, the boundary of the $V$-states belongs to the H\"older class $C^{2-\alpha}$.

More precisely, there exist a constant $a > 0$ and two continuous functions $\Omega:\,(-a,a)\rightarrow \mathbb{R}$, $r:\,(-a,a)\rightarrow C^{2-\alpha}(\mathbb{T})$  satisfying $\Omega(0)=\Omega_{m,b}^{\alpha}$, $r(0)=0$,
such that $(r_s)_{-a<s< a }$ 
 is a one-parameter non-trivial solution of the equation \eqref{Equa-Nonlin} describing $V$-states.
Moreover, $r_s$ admits the expansion
\begin{align*}
  \forall \theta\in\RR,\quad r_s(\theta)=s\cos(m\theta) + s\sum_{n\geqslant 1} b_{nm}(s) \cos(nm\theta),\quad  b_{nm}=O(s),
\end{align*}
and the mapping $\theta\mapsto \sqrt{b^2+2r_s(\theta)}\,e^{i\theta}$ maps the torus  $\mathbb{T}$ to the boundary of an $m$-fold rotating patch with the angular velocity $\Omega(s)$.
\end{theorem}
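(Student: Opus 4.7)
The plan is to view \eqref{Equa-Nonlin} as a functional equation $F:\RR\times U\to Y$ on suitable open neighborhoods of the origin in Banach spaces of $m$-fold symmetric, zero-mean functions on $\TT$, and to apply the Crandall-Rabinowitz bifurcation theorem at $(\Omega, r) = (\Omega_{m,b}^\alpha, 0)$. The natural source space is built on $C^{2-\alpha}$ (the expected boundary regularity of the V-states) with the target one derivative below; the $m$-fold symmetry is preserved by $F$ because the Green function $K^\alpha(x,y)$ is invariant under joint rotation, and the trivial branch $F(\Omega,0)\equiv 0$ follows from the same rotational invariance evaluated on the disc $b\DD$.

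The central analytic difficulty is that $K^\alpha$ is not explicit. Following the hint in the abstract, I would work with the decomposition
\begin{equation*}
K^\alpha(x,y) \,=\, c_\alpha\, |x-y|^{-\alpha}\,+\,H^\alpha(x,y),
\end{equation*}
where the first term is the classical Riesz-type kernel whose contribution to the V-state equation was analyzed in the planar case in \cite{HH15}, and $H^\alpha$ is a smooth boundary correction induced by the Dirichlet condition. The singular piece supplies the $C^1$ regularity of $F$ and the $C^{2-\alpha}$ boundary regularity through the $(1-\alpha)$-smoothing of the fractional integral, while $H^\alpha$ contributes only a smoothing perturbation to $\partial_r F$, modifying the eigenvalues but not the Fredholm structure.

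Next I would linearize at $r=0$. Expanding a test perturbation $h(\theta)=\sum_{n\geqslant 1} h_n \cos(n\theta)$ in the angular Fourier basis, the linearized operator should act diagonally as
\begin{equation*}
\partial_r F(\Omega,0)[h](\theta) \,=\, \sum_{n\geqslant 1} n\bigl(\Omega - \Omega_{n,b}^\alpha\bigr)\, h_n \sin(n\theta),
\end{equation*}
with the frequencies $\Omega_{n,b}^\alpha$ identified by expanding $K^\alpha$ in the Dirichlet eigenbasis $\{J_n(x_{n,k}\rho)\,e^{in\theta}\}$ of $-\Delta$ on $\DD$: integrating these eigenfunctions against $\mathbf{1}_{b\DD}$ and projecting onto the $n$-th angular mode on the circle $\{|x|=b\}$ reproduces exactly the Bessel series \eqref{def:Omg-m-alp}. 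Transversality $\partial_\Omega\partial_r F(\Omega_{m,b}^\alpha,0)[\cos(m\theta)]\notin \mathrm{Range}$ is then immediate from the $\Omega r'$ term.

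The main obstacle is the simplicity of the kernel, which amounts to showing $\Omega_{jm,b}^\alpha\neq\Omega_{m,b}^\alpha$ for every $j\geqslant 2$. Since \eqref{def:Omg-m-alp} is opaque as a discrete Bessel sum, this is precisely where I would invoke Sneddon's formula to convert the two series into a single integral representation, from which monotonicity and sharp asymptotics of $m\mapsto\Omega_{m,b}^\alpha$ can be extracted. The three regimes \eqref{case1}--\eqref{case3} should reflect three distinct strategies to certify this monotonicity: a direct uniform bound exploiting the smallness of $b$, a large-$m$ comparison with the planar Burbea-type spectrum of \cite{HH15}, and a small-$\alpha$ perturbation argument starting from the Euler spectrum of \cite{DHHM}. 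Once monotonicity is in hand, the Fredholm property follows from the diagonal form of the principal part together with the compactness of the $H^\alpha$-perturbation, and Crandall-Rabinowitz delivers the one-parameter branch, the expansion with $\cos(m\theta)$ as leading order, and, through the smoothing properties of the singular kernel, the $C^{2-\alpha}$ regularity of the resulting V-state boundary.
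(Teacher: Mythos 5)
Your proposal is correct and follows essentially the same route as the paper: the splitting $K^\alpha = c_\alpha|x-y|^{-\alpha} + K_1^\alpha$ with a smooth remainder (Lemma \ref{lem:ker-exp}, derived from heat-kernel estimates), the contour-dynamics reformulation and $C^1$ regularity on $X_m=C^{2-\alpha}\to Y_m=C^{1-\alpha}$, the diagonal linearized operator with multipliers $-nm(\Omega-\Omega^\alpha_{nm,b})$, Sneddon's formula to turn the Bessel sum into an integral, and the three monotonicity regimes matching \eqref{case1}--\eqref{case3} before invoking Crandall--Rabinowitz. The only heuristic shortcut worth noting is that the paper does not actually prove codimension-one range by a compact-perturbation argument off the planar operator; it does so directly from the asymptotics $\alpha_{m,b} \sim \frac{2^{\alpha-1}\Gamma(1-\alpha)}{b^\alpha\Gamma^2(1-\alpha/2)} m^{\alpha-1}$ (Lemma \ref{lem:Omg-m}-(ii)), which ensure the inverse multipliers cost exactly one derivative, matching $X_m$ to $Y_m$.
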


Before giving the ideas of the proof, we shall make some comments.
\begin{remark}
Theorem \ref{main-theorem} shows the existence of global solutions near Rankine vortices. This issue is open for general initial data.
\end{remark}
\begin{remark}
When the domain of the fluid is the ball $B(0, R)$, with $R>1$, then by a scaling argument and applying the preceding theorem,
the bifurcation from the unit disc $\DD$ occurs at the angular velocities (see \cite[Proposition 3]{HH15})
\begin{align*}
  \widetilde{\Omega}^\alpha_{m,R} \triangleq R^{-\alpha} \Omega_{m,R^{-1}}^\alpha .
\end{align*}
According to \eqref{eq:Omg-al-m}, \eqref{V-1-transfer} and \eqref{eq:alp-m-exp}, and using \eqref{eq:alpha-m-21} and \eqref{eq:V10-rem}
to control the remainder terms, we obtain
\begin{align}\label{eq:limit1}
  \lim_{R\rightarrow \infty} \widetilde{\Omega}^\alpha_{m,R}
  = \frac{\Gamma(1-\alpha)}{2^{1-\alpha}\Gamma^2(1-\frac{\alpha}{2})}
  \Big( \frac{\Gamma(1+\frac{\alpha}{2})}{\Gamma(2-\frac{\alpha}{2})}
  - \frac{\Gamma(m+\frac{\alpha}{2})}{\Gamma(m+1-\frac{\alpha}{2})} \Big),
\end{align}
which corresponds to Hassainia-Hmidi's result in \cite{HH15}.

On the other hand, when $\alpha\rightarrow 0$, as indicated in Lemma \ref{lem:Omg-m}-(i),
we recover the result of De la Hoz, Hassainia, Hmidi and Mateu \cite{DHHM} in the limit.
\end{remark}

\begin{remark}
  For the SQG equation corresponding to the case  $\alpha=1$  the situation is more delicate.
By reformulating the boundary equation of V-states to relax the violent singularity of the kernel,
we can give a full description of the linearized operator $\partial_r F(\Omega,0)$ and the associated dispersion relation.
It turns out that the spectrum $\Omega^1_{m,b}$ coincides with the limit of $\Omega^\alpha_{m,b}$ when
$\alpha\rightarrow 1$ as \eqref{eq:Omeg-alp->1} shows.
However, the function spaces used here are not well-adapted due to a logarithmic loss in frequency  when  $\alpha=1$. We believe that the $L^2$ weighted spaces introduced in \cite{CCG16} could be used in order to generate the  V-states in the critical case $\alpha=1$.%
\end{remark}

In the proof of Theorem \ref{main-theorem}, the first difficulty that one should  face is related to  the kernel function $K^{\alpha}$ which
has no  explicit  form  as in  the whole space  for  the gSQG equation \cite{HH15}
or Euler equation in the disc \cite{DHHM}. This makes the regularity problem of the functional $F$ introduced in \eqref{Equa-Nonlin} more complicated and to  circumvent this issue we establish in Lemma \ref{lem:ker-exp} the following decomposition in $\DD\times\DD$
\begin{align*}
  K^\alpha (x,y) = \frac{c_{\alpha}}{|x-y|^{\alpha}} + K_1^\alpha(x,y)
\end{align*}
where $K_1^\alpha$ is a smooth function in $\DD\times\DD$. We emphasize that this splitting is valid for any smooth bounded domain and extends a classical  result for Euler equation \cite{DHHM,Evans10}.
Remark that for the specific case of the unit disc one can recover this kernel from the eigenfunctions of the Laplacian which are explicitly described through Bessel functions, see \eqref{lem:disc-exp} below. However, it is not at all clear how to deal with this series and deduce the splitting mentioned above.
%
Then by virtue of this  decomposition together with Lemma \ref{lem:reg-1} dealing with singular kernel integrals on the torus,
we can prove the desired regularity properties of $F$ needed in the Crandall-Rabinowitz theorem (see Theorem \ref{thm:C-R}).
The spectral problem is carefully studied in Section \ref{sec:spect-Stu} and one important delicate point  lies on the analysis of angular velocity sequence \{$\Omega_{m,b}^\alpha, m\geqslant 1\}$. In order to get a one dimensional kernel we need  to check the monotonicity of these frequencies. The formula \eqref{def:Omg-m-alp} seems to be out of use due to the complexity of the sum. Surprisingly, this complicated form admits a nice integral representation by virtue of  Sneddon's formula \cite{Sne66}, from which we conduct  a careful analysis and  manage to show the key monotonicity property of the sequence $\{\Omega^\alpha_{m,b},m\geqslant 1\}$ under some  constraints on $\alpha, b$ and the symmetry. For this discussion, we refer to  Lemma \ref{lem:sned-form} and the proof of Lemma \ref{lem:Omg-m}.
%
Notice that the rest of  the conditions of  Crandall-Rabinowitz theorem are satisfied allowing to get an affirmative answer for the existence of the V-states for the gSQG equation \eqref{Eq-gSQG} in the disc $\DD$
when $\alpha\in (0,1)$.


The reminder of the paper is organized as follows.
In the next section, we shall present some technical results related to the spectral fractional Laplacian and the associated Green function in bounded smooth domain, and then we shall introduce some estimates on singular kernel integrals and also recall Sneddon's formula.
In Section \ref{sec:Time periodic motion of interfaces}, we shall write down the boundary equation of V-states in the unit disc.
The Sections \ref{sec:reg-F} and \ref{sec:spect-Stu} are devoted to the proof of Theorem \ref{main-theorem}.
In \mbox{Section \ref{sec:reg-F},} 
we study the linearization and regularity of the nonlinear functionals in the boundary equation.
In \mbox{Section \ref{sec:spect-Stu},} we conduct the spectral study of the linearized operator around zero and
under suitable assumptions we obtain a Fredholm operator of zero index.
Finally, in the last section we recall the Crandall-Rabinowitz theorem and give the proof of some auxiliary lemmas used in the paper.

\vskip1mm
\noindent \textbf{Notation.} Throughout this space we shall use the following convention and notation.
\begin{enumerate}[$\bullet$]
\item  $C$ denotes a positive constant that may change its value from line to line.
\item The set $\NN=\{0,1,2,\cdots\}$ is the set of nonnegative numbers,
and $\mathbb{N}^{+}=\{1,2,..\}$ denotes the set of all positive integers.
\item Let $X$ and $Y$ be two Banach spaces.
We denote by $\mathcal{L}(X, Y )$ the space of all continuous linear maps $T:X\rightarrow Y$
endowed with its usual strong topology.
\item We denote by  $\mathbf{D}$ a bounded open domain with smooth boundary of the Euclidean space $\mathbb{R}^d$,
while we use the notation $\mathbb{D}$ to denote the unit disc of the Euclidean space $\mathbb{R}^2$.
\end{enumerate}

\section{Tools}\label{sec:tools}
This section is devoted to some technical results related to the structure of the heat semi-flow and the Green function of fractional Laplacian in bounded smooth domains. We shall also discuss some estimates on integrals with singular periodic kernels and recall  Sneddon's  formula which plays a central role on the spectral study.
\subsection{Spectral fractional Laplacian and  Green function}
The main goal of this subsection is to explore the structure of Green function associated to Dirichlet  fractional Laplacian. We shall first recall how to define the spectral fractional Laplacian in bounded domains.
Let $\mathbf{D}\subset \RR^d$ $(d\ge 2)$ be a bounded open domain with smooth boundary. The $L^2(\mathbf{D})$-normalized eigenfunctions of the operator $-\Delta$ supplemented with Dirichlet boundary condition  are denoted by  $\phi_j$, and the associated  eigenvalues
counted with their multiplicities are positive real numbers  $\lambda_j$ such that
\begin{align}\label{def:phi-j}
\hbox{For}\quad j\geqslant1,\,	-\Delta\phi_j=\lambda_j\phi_j,\quad \phi_j |_{\partial \mathbf{D}} =0,\quad  \int_{\mathbf{D}}\phi_j^2(x) \dd x=1.
\end{align}
It is a classical fact that
\begin{align*}
  0<\lambda_1<\lambda_2\leqslant...\leqslant\lambda_j\rightarrow\infty.
\end{align*}
Now, according to the functional calculus, the spectral fractional Laplacian $ (-\Delta)^{-1+\frac{\alpha}{2}}$ with $\alpha\in(0,2)$  is defined through
\begin{equation}\label{Kern-For0}
	\begin{split}
		(-\Delta)^{-1+\frac{\alpha}{2}}f(x)&=\frac{1}{\Gamma(1-\frac{\alpha}{2})}
		\int_0^{\infty} t^{-\frac\alpha2} e^{t\Delta} f(x) \dd t \\
		&=\int_{\mathbf{D}}K^\alpha(x,y) f(y) \dd y,
	\end{split}
\end{equation}
with $\Gamma$ being the classical  Gamma function and $K^\alpha$ the Green function. Let $H_\mathbf{D}(t,\cdot,\cdot)$ denote  the kernel of the heat semigroup $e^{t\Delta}$ on the domain $\mathbf{D}$ with Dirichlet boundary condition, then
\begin{align*}
  e^{t\Delta}f(x) = \int_\mathbf{D} H_\mathbf{D}(t,x,y) f(y)\dd y.
\end{align*}
It is a classical fact that this kernel can be reconstructed from the eigenfunctions \eqref{def:phi-j} as follows
\begin{align}\label{Heat-Str1}
\forall\ t\geqslant0, x,y\in\mathbf{D},\quad  H_\mathbf{D}(t,x,y)=\sum_{j\geqslant 1} e^{-\lambda_j t}\phi_j(x)\phi_j(y).
\end{align}
Consequently, the Green function $K^\alpha$ admits  different representations
\begin{align}
	K^\alpha(x,y) &
	= \frac{1}{\Gamma(1-\frac{\alpha}{2})} \int_0^\infty t^{-\frac{\alpha}{2}} H_{\mathbf{D}}(t,x,y) \dd t\label{eq:K-H-rela} \\
	& = \frac{1}{\Gamma(1-\frac{\alpha}{2})} \int_0^{\infty} t^{-\frac\alpha2}\sum_{j\geqslant 1} e^{-\lambda_j t}\phi_j(x)\phi_j(y) \dd t \nonumber \\
	& = \sum_{j\geqslant1} \lambda_j^{\frac\alpha2-1}\phi_j(x)\phi_j(y).\label{Kern-For}
\end{align}
There is an abundant literature dealing with the analytic properties of the heat kernels, see for instance \cite{CI16a,Davies89,Zhang06}.
Here, we shall restrict the discussion to some of them that will be needed later.
In view of the points  (31)-(32) in \cite{CI16a} or \cite{Davies89,Zhang06},
there exists a time $T_0>0$ and  positive constant $C$ depending only on the domain  $\mathbf{D}$  such that for all $0\leqslant t\leqslant T_0$ and $x,y\in \mathbf{D},$
\begin{equation}\label{eq:H_D-es0}
  0 < H_\mathbf{D}(t,x,y)\leqslant C\min\Big\{ \tfrac{\phi_1(x)}{|x-y|},1\Big\}
  \min\Big\{\tfrac{\phi_1(y)}{|x-y|},1\Big\} t^{-\frac{d}{2}}e^{-\frac{|x-y|^2}{C\,t}}
\end{equation}
and
\begin{equation}\label{eq:H_D-es1}
  \tfrac{|\nabla_x H_\mathbf{D}(t,x,y)|}{H_\mathbf{D}(t,x,y)} \leqslant
  \begin{cases}
    \frac{C}{d(x)}, \quad & \textrm{if}\;\; \sqrt{t}\geqslant d(x), \\
    \frac{C}{\sqrt{t}}\Big(1+\frac{|x-y|}{\sqrt{t}}\Big), \quad & \textrm{if}\;\; \sqrt{t}\leqslant d(x),
  \end{cases}
\end{equation}
with $d(x) \triangleq d(x,\partial \mathbf{D})$ being the Euclidean distance between $x$ and the boundary $\partial \mathbf{D}$. The function  $\phi_1$ is  the first eigenfunction of $-\Delta$ as in \eqref{def:phi-j}.\\
Before stating the main result of this section, see Lemma \ref{lem:ker-exp},  on the Dirichlet Green function $K^\alpha$,
we give two auixilary results on the Dirichlet heat kernel.
The first one is on the higher differentiability  of $H_\mathbf{D}(t)$ inside the domain $\mathbf{D}$  for a short time whose  proof is classical and will be postponed later in the Appendix. Its statement reads as follows.
\begin{lemma}\label{lem:estimate_H}
We have that for every $(x,y)\in \mathbf{D}\times \mathbf{D}$, $n\in\NN$
and for every $0< t \leqslant \min\{ d(x)^2,T_0\}$,
\begin{align*}
  |\nabla^n_x H_\mathbf{D}(t,x,y)| \leqslant C t^{-\frac{n+d}{2}} e^{-\frac{|x-y|^2}{C t}},
\end{align*}
with $C$ depending on $d,n$ and $T_0$.
\end{lemma}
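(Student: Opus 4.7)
The plan is to apply classical interior parabolic regularity for the heat equation on a parabolic cylinder that stays strictly inside $\mathbf{D}$, and then convert the resulting sup-norm bound into the stated Gaussian tail via \eqref{eq:H_D-es0}. Since $0<t\leqslant d(x)^2$, the ball $B(x,\sqrt{t}/2)$ is contained in $\mathbf{D}$, so the parabolic cylinder $Q:=B(x,\sqrt{t}/2)\times(3t/4,t]$ lies inside $\mathbf{D}\times(0,T_0]$.

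First I would invoke the standard interior derivative estimate for the heat equation: for any classical solution $v$ of $\partial_s v=\Delta_z v$ on a cylinder $B(x_0,r)\times(t_0-r^2,t_0]$ and any $n\in\mathbb{N}$,
\begin{equation*}
|\nabla_z^n v(t_0,x_0)|\leqslant \frac{C_{n,d}}{r^n}\sup_{(s,z)\in B(x_0,r)\times(t_0-r^2,t_0]}|v(s,z)|.
\end{equation*}
This is a classical consequence of parabolic Schauder theory, or more elementarily of iterating a cutoff argument against the whole-space heat kernel. Applied to $v(s,z):=H_\mathbf{D}(s,z,y)$, which solves the heat equation in $z$ for fixed $y$, at $(t_0,x_0)=(t,x)$ with $r=\sqrt{t}/2$, it yields
\begin{equation*}
|\nabla_x^n H_\mathbf{D}(t,x,y)|\leqslant \frac{C_n}{t^{n/2}}\sup_{(s,z)\in Q}H_\mathbf{D}(s,z,y).
\end{equation*}

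Next I would plug in the Gaussian upper bound \eqref{eq:H_D-es0}. For $(s,z)\in Q$ one has $3t/4\leqslant s\leqslant t\leqslant T_0$, hence $s^{-d/2}\leqslant Ct^{-d/2}$ and
\begin{equation*}
H_\mathbf{D}(s,z,y)\leqslant Ct^{-d/2}e^{-|z-y|^2/(Ct)}.
\end{equation*}
The hard part --- and in fact the only step requiring real care --- will be the standard Gaussian recentering. Since $|z-x|\leqslant\sqrt{t}/2$, the triangle inequality gives $|z-y|\geqslant |x-y|-\sqrt{t}/2$, and splitting into the cases $|x-y|\leqslant\sqrt{t}$ (where $e^{-|x-y|^2/(C't)}$ is bounded below by a universal constant) and $|x-y|>\sqrt{t}$ (where $|z-y|\geqslant |x-y|/2$) yields $e^{-|z-y|^2/(Ct)}\leqslant C e^{-|x-y|^2/(C't)}$ after enlarging constants. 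Combining the three displays produces the claim.

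I expect no substantive obstacle: the interior derivative estimate is entirely classical and the Gaussian upper bound is provided as a hypothesis. The only mild care needed is matching constants across the two cases in the recentering step, which is routine.
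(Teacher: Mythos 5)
Your proof is correct, but it follows a genuinely different route from the paper's. You invoke the classical interior parabolic derivative estimate
\begin{equation*}
|\nabla^n_z v(t_0,x_0)|\leqslant \tfrac{C_{n,d}}{r^n}\sup_{B(x_0,r)\times(t_0-r^2,t_0]}|v|
\end{equation*}
on the cylinder $B(x,\sqrt t/2)\times(3t/4,t]\subset\mathbf{D}\times(0,T_0]$ (this is where the hypothesis $t\leqslant d(x)^2$ enters), then feed in the Gaussian upper bound \eqref{eq:H_D-es0} and recenter the Gaussian from $z$ to $x$ by the two-case argument. The paper instead starts from a Green's-formula representation
$H_\mathbf{D}(t,x,y)=G_t(x-y)-\int_0^t\int_{\partial B(\bar x,\delta)}[\,\partial_{\mathbf n}G_{t-s}(x-z)\,h(s,z)-\partial_{\mathbf n}h(s,z)\,G_{t-s}(x-z)\,]\,\dd\sigma\,\dd s$,
differentiates $n$ times in $x$ (the derivatives fall on the explicit $G_{t-s}$ factors, so only the first-order bound \eqref{eq:H_D-es1} on $H_\mathbf{D}$ is needed), and estimates the nonsingular boundary integral. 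Your route is more economical for this lemma: it uses only \eqref{eq:H_D-es0}, treats the interior estimate as a well-known black box, and avoids the case-splitting on the choice of $\delta$ and the surface-integral bookkeeping. The paper's route costs more but stays closer to the explicit representation formulas for $H_\mathbf{D}-G_t$ that are reused in Lemma~\ref{lem:es-kern}, where one genuinely needs to capture the smooth remainder $H_\mathbf{D}-G_t$ and not merely bound each term; in that sense the Green/Duhamel machinery pays off in the next lemma, while your argument is the cleaner standalone proof of the present bound.
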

Notice that the latter  estimate in the preceding lemma  degenerates on the diagonal $x=y$ for small time $t$ in a similar way to the Gauss kernel of the heat semigroup $e^{t\Delta}$ on the whole space $\RR^d$ explicitly given by   $(x,y)\mapsto G_t(x-y) = \frac{1}{(4\pi t)^{d/2}} e^{-\frac{|x-y|^2}{4t}}$.
The second auxiliary result deals with the description of this defect and shows that $H_\mathbf{D}(t)$ differs  from $G_t$  by a smooth contribution uniformly for small time. The  proof will be provided  in the Appendix.
\begin{lemma}\label{lem:es-kern}
Let  $(x,y)\in \mathbf{D} \times \mathbf{D}$, $k,l\in\NN$
and fix  $0< t \leqslant \min\{d(x)^2, d(y)^2, T_0\}$. Then, we have
\begin{equation*}
  \left|\nabla_x^k \nabla_y^l \big(H_\mathbf{D}(t,x,y) -G_t(x-y) \big) \right| \leqslant C,
\end{equation*}
where $C>0$ depends on $d,k,l,T_0, d(x)$ and  $d(y)$.
\end{lemma}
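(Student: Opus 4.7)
The strategy is to express the difference $w(t,x,y) := H_\mathbf{D}(t,x,y) - G_t(x-y)$ as a boundary integral via Green's identity and then exploit the separation $|z-y| \geqslant d(y)$, $|x-z| \geqslant d(x)$ for $z \in \partial\mathbf{D}$ to obtain uniform bounds. The point is that on the diagonal both $H_\mathbf{D}$ and $G_t$ are singular of the same order as $t\to 0$, but their singular parts cancel because the full-space heat kernel captures the short-time behavior of the Dirichlet heat kernel exactly, up to an exponentially small boundary correction.

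First I set up the initial-boundary value problem for $w$. For fixed $y \in \mathbf{D}$, viewing $w(t,x,y)$ as a function of $(t,x) \in (0,\infty) \times \mathbf{D}$, both $H_\mathbf{D}(t,x,y)$ and $G_t(x-y)$ satisfy the homogeneous heat equation and both converge to $\delta_y$ as $t \to 0^+$. Hence $w$ solves $(\partial_t - \Delta_x) w = 0$ with zero initial data and Dirichlet data $w(t,x,y) = -G_t(x-y)$ for $x \in \partial\mathbf{D}$. Since $|x-y| \geqslant d(y) > 0$ on $\partial\mathbf{D}$, this boundary data (together with all its $(t,x)$-derivatives) is uniformly bounded on $(0, T_0] \times \partial\mathbf{D}$ and vanishes to infinite order as $t \to 0^+$.

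Second, I derive an explicit boundary integral formula. Apply Green's second identity to the pair $\bigl(H_\mathbf{D}(t-s,x,z),\ G_s(z-y)\bigr)$, viewed as functions of $(s,z)$, integrated over $(\varepsilon,t-\varepsilon) \times \mathbf{D}$. These satisfy the adjoint heat equations $(\partial_s + \Delta_z) H_\mathbf{D}(t-s,x,z) = 0$ and $(\partial_s - \Delta_z) G_s(z-y) = 0$, while $H_\mathbf{D}$ vanishes on $\partial\mathbf{D}$, killing one of the two boundary terms. Passing $\varepsilon \to 0^+$ and using the delta-function limits $H_\mathbf{D}(t-s) \to \delta_x$, $G_s \to \delta_y$ yields
\begin{equation*}
H_\mathbf{D}(t,x,y) - G_t(x-y) = \int_0^t \oint_{\partial\mathbf{D}} G_s(z-y)\, \partial_{\nu_z} H_\mathbf{D}(t-s,x,z)\, \dd\sigma(z) \dd s.
\end{equation*}
Since $z \in \partial\mathbf{D}$ is separated from $x,y \in \mathbf{D}$, the integrand is smooth in $(x,y)$ and $\nabla_x^k \nabla_y^l$ can be taken inside the integral. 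The Gaussian factor obeys $|\nabla_y^l G_s(z-y)| \leqslant C s^{-(d+l)/2} e^{-d(y)^2/(Cs)}$, which is bounded uniformly in $s\in(0,T_0]$ because $s^{-p} e^{-c/s}$ is bounded for any $p, c > 0$. For the heat kernel factor, I combine Lemma \ref{lem:estimate_H} applied to $\nabla_x^k H_\mathbf{D}(t-s,x,z)$ (valid since $t-s \leqslant t \leqslant d(x)^2$) with the boundary vanishing of $H_\mathbf{D}$ encoded through the factor $\phi_1(z)/|x-z|$ in \eqref{eq:H_D-es0}, producing a uniform bound $|\partial_{\nu_z} \nabla_x^k H_\mathbf{D}(t-s,x,z)| \leqslant C (t-s)^{-(k+d)/2} e^{-d(x)^2/(C(t-s))}$. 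Multiplying and integrating over $(0,t) \times \partial\mathbf{D}$, a set of finite measure $\leqslant T_0|\partial\mathbf{D}|$, yields the claimed estimate.

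The main obstacle is the boundary control of the mixed derivative $\partial_{\nu_z} \nabla_x^k H_\mathbf{D}(u,x,z)$ with $z \in \partial\mathbf{D}$: Lemma \ref{lem:estimate_H} bounds $\nabla_x^k H_\mathbf{D}$ only at interior $x$, while \eqref{eq:H_D-es0}--\eqref{eq:H_D-es1} asymmetrically control derivatives in the first spatial argument rather than the normal derivative in the second argument at $\partial\mathbf{D}$. Extracting $\partial_{\nu_z}$ requires combining the linear vanishing $H_\mathbf{D} \sim d(z)$ near $\partial\mathbf{D}$ (from the $\phi_1(z)$ factor) with an iteration in $x$ via the semigroup identity $H_\mathbf{D}(u,x,z) = \int_\mathbf{D} H_\mathbf{D}(u/2,x,w) H_\mathbf{D}(u/2,w,z) \dd w$, which moves the awkward $z$-derivative onto an interior factor where Lemma \ref{lem:estimate_H} applies. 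An alternative that avoids this technicality is to bypass the boundary integral entirely and invoke interior parabolic Schauder estimates directly for the BVP satisfied by $w$, using only the uniform smoothness and exponential flattening at $t=0$ of its boundary data $-G_t(\cdot - y)$.
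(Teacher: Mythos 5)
Your approach is genuinely different from the paper's. You derive a boundary-layer representation
\[
H_\mathbf{D}(t,x,y) - G_t(x-y) = \int_0^t\oint_{\partial\mathbf{D}} G_s(z-y)\,\partial_{\nu_z}H_\mathbf{D}(t-s,x,z)\,\dd\sigma(z)\dd s,
\]
which is correct and follows as you say from Green's identity with the adjoint pair $\bigl(G_s(\cdot-y), H_\mathbf{D}(t-s,x,\cdot)\bigr)$, using the Dirichlet condition $H_\mathbf{D}(\cdot,x,z)|_{z\in\partial\mathbf{D}}=0$ to kill one boundary term. The paper instead inserts a cutoff $\chi$ equal to $1$ near $x,y$ and vanishing near $\partial\mathbf{D}$, applies Duhamel to $\chi(z)G_t(z-y)$, and arrives at the formula
\[
H_\mathbf{D}(t,x,y) - G_t(x-y) = \int_0^t\int_\mathbf{D} H_\mathbf{D}(t-s,x,w)\bigl[\Delta\chi(w)G_s(w-y) + 2\nabla\chi(w)\cdot\nabla G_s(w-y)\bigr]\dd w\dd s,
\]
where the $w$-integral is supported on an interior annulus at distance $\geqslant \tfrac{1}{3}d_0$ from both $\overline{\mathbf{D}_0}$ and $\partial\mathbf{D}$. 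That is the entire point: all evaluations of $H_\mathbf{D}$ and $G$ then occur at well-separated interior points, so Lemma \ref{lem:estimate_H} and Gaussian bounds apply directly, and no boundary derivative of the heat kernel is ever needed.

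The gap in your argument is precisely the one you flag, and it is not easily patched with the tools the paper has established. To bound $\partial_{\nu_z}\nabla_x^k H_\mathbf{D}(u,x,z)$ at $z\in\partial\mathbf{D}$ you would need an estimate on a normal derivative at the boundary; but \eqref{eq:H_D-es0} only gives an upper bound on $H_\mathbf{D}$ that vanishes like $\phi_1(z)$, which controls the size of $H_\mathbf{D}$ near the boundary and says nothing quantitative about $\partial_{\nu_z}H_\mathbf{D}$ there (an upper bound vanishing to first order does not bound the first derivative). Estimate \eqref{eq:H_D-es1} controls $\nabla_x H_\mathbf{D}/H_\mathbf{D}$, again the first spatial argument, not the normal derivative in the second argument, and Lemma \ref{lem:estimate_H} is restricted to $x$ at a positive distance from $\partial\mathbf{D}$. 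Your semigroup identity $H_\mathbf{D}(u,x,z)=\int_\mathbf{D}H_\mathbf{D}(u/2,x,w)H_\mathbf{D}(u/2,w,z)\dd w$ does not ``move the awkward $z$-derivative onto an interior factor'': applying $\partial_{\nu_z}$ still lands on $H_\mathbf{D}(u/2,w,z)$ with $z\in\partial\mathbf{D}$, which is exactly the same unboundedly many times differentiated boundary normal derivative you were trying to avoid (worse, now $w$ ranges over all of $\mathbf{D}$, including points arbitrarily close to $\partial\mathbf{D}$, so Lemma \ref{lem:estimate_H} cannot be applied uniformly to the other factor either). The Schauder-estimate alternative is in principle viable --- the boundary data $-G_t(z-y)|_{z\in\partial\mathbf{D}}$ and all its $(t,z,y)$-derivatives are uniformly bounded for $t\in(0,T_0]$ and vanish to infinite order at $t=0$ since $|z-y|\geqslant d(y)$ --- but it requires invoking interior-up-to-$t=0$ parabolic regularity theory, which is a substantially heavier tool than anything the paper uses and would need to be set up carefully (in particular to produce derivatives in the parameter $y$ as well). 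The paper's cutoff device is precisely the elementary trick that avoids all boundary estimates; if you want a self-contained proof along your lines you would need to supply a boundary-gradient Gaussian estimate $|\partial_{\nu_z}H_\mathbf{D}(u,x,z)|\leqslant C u^{-(d+1)/2}e^{-|x-z|^2/Cu}$ and its $x$-differentiated analogues as additional lemmas.
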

Now, we are ready to state the main result of this section dealing with
 a natural decomposition of Dirichlet Green function $K^\alpha$. It can be split into a singular one that coincides with the
 whole-space Green function and a smooth term  with bounded derivatives inside the domain $\mathbf{D}\times \mathbf{D}$.
\begin{lemma}\label{lem:ker-exp}
Let $\mathbf{D}\subset \RR^d$ $(d\ge 2)$ be a bounded open domain with a smooth boundary.
Let $\alpha\in (0,2)$ and $K^\alpha$ be the kernel function given by \eqref{Kern-For}.
Then we have that for every $x\neq y \in \mathbf{D} \times \mathbf{D}$,
\begin{align}\label{eq:Kalp-exp}
  K^\alpha(x,y) = \frac{c_\alpha }{|x-y|^{\alpha+ d -2}} + K^\alpha_1(x,y),
\end{align}
with $c_\alpha =\frac{4^{\alpha/2-1}\Gamma (\frac{\alpha}{2}+\frac{d}{2}-1)}{\pi^{d/2} \Gamma(1-\frac{\alpha}{2})}$ and $K_1^\alpha\in C^\infty (\mathbf{D}\times \mathbf{D})$.
\end{lemma}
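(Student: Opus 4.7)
The plan is to express the singular part of $K^\alpha$ as the same spectral integral with the Dirichlet heat kernel replaced by the whole-space Gaussian, and then show that the remainder, driven by the difference of the two heat kernels, is smooth on the interior. More precisely, I would first verify by direct computation that
\begin{align*}
  \frac{1}{\Gamma(1-\frac{\alpha}{2})}\int_0^\infty t^{-\frac{\alpha}{2}}\, G_t(x-y)\, \dd t
  = \frac{c_\alpha}{|x-y|^{\alpha+d-2}},
\end{align*}
via the substitution $u=|x-y|^2/(4t)$, which reduces the integral to a Gamma-function evaluation and recovers exactly the constant $c_\alpha$ announced in the lemma. Combining this with \eqref{eq:K-H-rela} then gives the natural candidate
\begin{align*}
  K_1^\alpha(x,y) \;=\; \frac{1}{\Gamma(1-\frac{\alpha}{2})}\int_0^\infty t^{-\frac{\alpha}{2}} \bigl(H_\mathbf{D}(t,x,y)-G_t(x-y)\bigr)\,\dd t,
\end{align*}
and the whole task reduces to proving $K_1^\alpha \in C^\infty(\mathbf{D}\times \mathbf{D})$.

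To study regularity I would fix an arbitrary compact set $K \subset \mathbf{D}\times \mathbf{D}$, set $\delta = \tfrac{1}{2}\inf_{(x,y)\in K}\min\{d(x),d(y)\}>0$ and $T_1 = \min\{\delta^2, T_0\}$, and split the time integral at $t=T_1$. For the short-time piece $\int_0^{T_1}$, which is the delicate regime since both $H_\mathbf{D}(t,x,y)$ and $G_t(x-y)$ blow up individually as $t\to 0^+$ when $x=y$, I would invoke Lemma \ref{lem:es-kern}, which gives, uniformly in $(x,y)\in K$ and $0<t\leqslant T_1$, the bound
\begin{align*}
  \bigl|\nabla_x^k\nabla_y^l\bigl(H_\mathbf{D}(t,x,y)-G_t(x-y)\bigr)\bigr|\leqslant C_{k,l,K}.
\end{align*}
Since $\alpha<2$ makes $t^{-\alpha/2}$ integrable on $(0,T_1]$, the differentiated integrand is dominated by an integrable function, and Lebesgue's differentiation under the integral sign gives any derivative of the short-time piece, uniformly on $K$.

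For the long-time piece $\int_{T_1}^\infty$, I would use the spectral expansion \eqref{Heat-Str1}. Standard interior elliptic regularity for the Dirichlet eigenfunctions yields polynomial-in-$\lambda_j$ bounds $|\nabla^{k}\phi_j(x)|\leqslant C_{K,k}\,\lambda_j^{N(k,d)}$ on $K$, and the exponential factor $e^{-\lambda_j t}$ for $t\geqslant T_1>0$ absorbs this polynomial growth (recall $\lambda_j\to\infty$), so the termwise-differentiated series converges absolutely and uniformly on $K\times[T_1,\infty)$ with integrable majorant $e^{-\lambda_1 t/2}$ times a summable tail. Simultaneously, $G_t(x-y)$ is smooth in $(x,y)$ for $t\geqslant T_1$ with derivatives uniformly bounded by $C_{k,l}(1+t^{-(k+l+d)/2})e^{-|x-y|^2/(4t)}$, so the integral $\int_{T_1}^\infty t^{-\alpha/2}\nabla_x^k\nabla_y^l G_t(x-y)\,\dd t$ is easily controlled. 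Putting the two ranges together proves that every partial derivative of $K_1^\alpha$ exists and is continuous on $K$, hence $K_1^\alpha\in C^\infty(\mathbf{D}\times\mathbf{D})$ as $K$ was arbitrary.

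The main obstacle is the small-time regime, where subtle cancellations between the boundary-aware kernel and the Gaussian are needed to keep each derivative bounded; this is precisely where Lemma \ref{lem:es-kern} does the work, so once that lemma is in hand the rest of the proof is essentially a dominated convergence/parameter differentiation argument together with a Gamma-function identity.
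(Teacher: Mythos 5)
Your approach is essentially the same as the paper's at its core — identify $\int_0^\infty t^{-\alpha/2}G_t(x-y)\,\dd t$ with the Riesz-type singular term, invoke Lemma~\ref{lem:es-kern} for the small-time difference, and use Lemma~\ref{lem:phi} plus the exponential eigenvalue decay for the large-time piece — but the bookkeeping differs in one place. The paper integrates the Gaussian only over $[0,d_0^2]$ and absorbs what is missing into the incomplete Gamma term $K_{13}^\alpha$ of \eqref{eq:K13}, so the large-time regime involves \emph{only} $H_\mathbf{D}$, for which eigenfunction decay is immediately available. You instead integrate the Gaussian over all of $(0,\infty)$ to get the clean closed form for $c_\alpha/|x-y|^{\alpha+d-2}$, which is elegant, but it leaves you with the extra task of controlling $\int_{T_1}^\infty t^{-\alpha/2}\nabla_x^k\nabla_y^l G_t(x-y)\,\dd t$.

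That extra task is where your write-up has a genuine, though easily repaired, slip. The bound you state for the Gaussian derivatives, $C_{k,l}\bigl(1+t^{-(k+l+d)/2}\bigr)e^{-|x-y|^2/(4t)}$, reduces for $t\geqslant T_1$ (and in particular near the diagonal) to a constant, and $\int_{T_1}^\infty t^{-\alpha/2}\,\dd t$ diverges for $\alpha\in(0,2)$; so as written the large-time Gaussian contribution is not controlled. What saves you is the sharper estimate $\bigl|\nabla_x^k\nabla_y^l G_t(x-y)\bigr|\leqslant C_{k,l}\,t^{-(k+l+d)/2}$, without the additive $1$, which follows from the scaling form $t^{-(k+l+d)/2}Q\bigl((x-y)/\sqrt{t}\bigr)e^{-|x-y|^2/(4t)}$ with $Q$ polynomial; then $\int_{T_1}^\infty t^{-\alpha/2-(k+l+d)/2}\,\dd t$ converges precisely because $\alpha+d>2$ (this is also what makes your closed-form Gamma evaluation of the singular part, and the very definition of $c_\alpha$, possible in the first place). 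With that one correction, your argument closes and is a valid alternative organization of the paper's proof; the paper's split at $d_0^2$ simply avoids ever having to make this Gaussian-tail estimate.
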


\begin{remark}
This result is compatible with the classical one known for $\alpha=0$. For instance  when $\alpha=0$ and  $d = 2$, we have
\begin{equation*}
  K^0(x,y)=-\frac{1}{2\pi}\log |x-y| + K_1^{0}(x,y),
\end{equation*}
where $K_1^{0}$ is a smooth function in the open domain $\mathbf{D} \times \mathbf{D}$ and partially  harmonic in $x$ and $y$ (\cite{Evans10}).
\end{remark}

\begin{proof}[Proof of Lemma $\ref{lem:ker-exp}$]
  For $x,y\in\mathbf{D}$ fixed, there exists an open domain $\mathbf{D}_0 \subset \overline{\mathbf{D}_0}\subset \mathbf{D}$
such that $x,y\in \mathbf{D}_0$.
Denote $d_0 \triangleq \min\{\sqrt{T_0},d(\overline{\mathbf{D}_0}, \partial \mathbf{D})\}>0$.
Then,  we get from \eqref{eq:K-H-rela} the decomposition
\begin{align*}
  K^\alpha(x,y) & = \frac{1}{\Gamma(1-\frac{\alpha}{2})}
  \int_0^{d_0^2} t^{-\frac{\alpha}{2}} H_\mathbf{D}(t,x,y) \dd t
  + \frac{1}{\Gamma(1-\frac{\alpha}{2})} \int_{d_0^2}^\infty t^{-\frac{\alpha}{2}} H_\mathbf{D}(t,x,y) \dd t \nonumber\\
  & =\frac{1}{\Gamma(1-\frac{\alpha}{2})}  \int_0^{d_0^2} t^{-\frac{\alpha}{2}} G_t(x-y) \dd t +
  \frac{1}{\Gamma(1-\frac{\alpha}{2})}
  \int_0^{d_0^2} t^{-\frac{\alpha}{2}} \big(H_\mathbf{D}(t,x,y) - G_t(x-y)\big)\dd t \nonumber \\
  & \quad   +  \frac{1}{\Gamma(1-\frac{\alpha}{2})}
  \int_{d_0^2}^\infty t^{-\frac{\alpha}{2}} H_\mathbf{D}(t,x,y) \dd t \nonumber \\
  & \triangleq \frac{1}{\Gamma(1-\frac{\alpha}{2})} \int_0^{d_0^2} t^{-\frac{\alpha}{2}} G_t(x-y) \dd t +
  K^\alpha_{11}(x,y) + K^\alpha_{12}(x,y).
\end{align*}
To deal with the first term, we
use a  change of variables allowing to get
\begin{align*}
  \frac{1}{\Gamma(1-\frac{\alpha}{2})}\int_0^{d_0^2} t^{-\frac{\alpha}{2}} G_t(x-y)\dd t & =
  \frac{1}{\Gamma(1-\frac{\alpha}{2})}\int_0^{d_0^2} t^{-\frac{\alpha}{2}}
  \frac{1}{(4\pi t)^{d/2}} e^{-\frac{|x-y|^2}{4t}} \dd t \\
  & = \frac{4^{\alpha/2 -1}}{\pi^{d/2}\Gamma(1-\frac{\alpha}{2})} \frac{1}{|x-y|^{\alpha+d -2}}
  \int_{\frac{|x-y|^2}{4d_0^2}}^\infty \tau^{\frac{\alpha}{2}+ \frac{d}{2} -2} e^{-\tau} \dd \tau \\
  & = \frac{4^{\alpha/2-1}\Gamma (\frac{\alpha}{2}+\frac{d}{2}-1)}{\pi^{d/2} \Gamma(1-\frac{\alpha}{2})}
  \frac{1}{|x-y|^{\alpha + d-2}}
  - K_{13}^\alpha(x,y),
\end{align*}
with
\begin{align}\label{eq:K13}
  K^\alpha_{13}(x,y) \triangleq \frac{4^{\alpha/2-1}}{\pi^{d/2}\Gamma(1-\frac{\alpha}{2})}  \frac{1}{|x-y|^{\alpha + d-2}}
  \int_0^{\frac{|x-y|^2}{4d_0^2}} \tau^{\frac{\alpha}{2} + \frac{d}{2}-2} e^{-\tau} \dd \tau.
\end{align}
Therefore, we may decompose $K^\alpha$ as in \eqref {eq:Kalp-exp} with
\begin{align*}
  K^\alpha_1(x,y) = K^\alpha_{11}(x,y) + K^\alpha_{12}(x,y) - K^\alpha_{13}(x,y).
\end{align*}
In what follows we intend to  show that  all these functions  are $C^\infty$-smooth in $\mathbf{D}\times \mathbf{D}$.
For $K^\alpha_{11}$, by virtue of Lemma \ref{lem:es-kern}, we infer that for every $k,l\geq 0$ and $\alpha\in(0,2)$,
\begin{align*}
  |\nabla_x^k \nabla_y^l K^\alpha_{11}(x,y)| &\leqslant C \int_0^{d_0^2}
  t^{-\frac{\alpha}{2}}\dd t\\
  &\leqslant C.
\end{align*}
The smoothness of $K^\alpha_{12}$ is a direct consequence of Lemma \ref{lem:phi}. Indeed, using \eqref{Heat-Str1} we infer
\begin{align*}
  |\nabla_x^k\nabla^l_yK^\alpha_{12}(x,y)| & = \frac{1}{\Gamma(1-\frac{\alpha}{2})} \Big|\int_{d_0^2}^\infty t^{-\frac{\alpha}{2}}
  \Big(\sum_{j\geqslant 1} e^{-\lambda_j t} \nabla^k_x \phi_j(x) \nabla^l_y \phi_j(y) \Big) \dd t \Big| \\
  & \leqslant C \sum_{j\geqslant 1} \int_{d_0^2}^\infty t^{-\frac{\alpha}{2}} e^{-\lambda_j t}
  \|\nabla^k \phi_j\|_{L^\infty} \|\nabla^l \phi_j\|_{L^\infty} \dd t.
  \end{align*}
  Applying the estimates of Lemma \ref{lem:phi} with Sobolev embeddings we get in view of Weyl's law stating that  $\lambda_j \sim j$ as $j\rightarrow \infty$,
\begin{align*}
  |\nabla_x^k\nabla^l_yK^\alpha_{12}(x,y)|
  & \leqslant C \Big(\sum_{j\geqslant 1} e^{-\frac{d_0^2}{2}\lambda_j} (1+\lambda_j)^{\frac{k+l}{2} +d +1} \Big)
  \int_{d_0^2}^\infty t^{-\frac{\alpha}{2}} e^{-\frac{\lambda_1}{2}t}\dd t\\
  & \leqslant C.
\end{align*}
Now we shall move to $K^\alpha_{13}$ defined in \eqref{eq:K13} and show that it is smooth. According to Taylor expansion of $e^{-\tau}$, we find
\begin{align*}
  K^\alpha_{13}(x,y) & = \frac{4^{\alpha/2-1}}{\pi^{d/2}\Gamma(1-\frac{\alpha}{2})}  \frac{1}{|x-y|^{\alpha + d-2}}
  \int_0^{\frac{|x-y|^2}{4d_0^2}} \tau^{\frac{\alpha}{2} + \frac{d}{2}-2} \Big(\sum_{n=0}^\infty \frac{(-\tau)^n}{n!}
  \Big)\dd \tau \\
  & = \frac{1}{2^{2n+d}\pi^{d/2}\Gamma(1-\frac{\alpha}{2})d_0^{\alpha+d-2}}
  \sum_{n=0}^\infty \frac{(-1)^n|x-y|^{2n}}{n!(n+\frac{\alpha}{2}+\frac{d}{2}-1) d_0^{2n}},
\end{align*}
which is an absolutely convergent power series and is $C^\infty$-smooth in $x$ and $y$.
Hence, gathering the above estimates leads to the desired splitting  \eqref{eq:Kalp-exp}
with $K^\alpha_1\in C^\infty(\mathbf{D}\times \mathbf{D})$.
\end{proof}

The next goal is to explore how the symmetry of the  domain can be reflected on the Green function,  in the sense that
if the domain $\mathbf{D}$ is invariant under suitable planar transformations then the kernel functions $H_{\mathbf{D}}$ and $K^\alpha$
will enjoy an  adequate  symmetry property. More precisely, we shall establish the following result.
\begin{lemma}\label{lem:HD-sym}
Let $\mathbf{D}\subset \RR^2$ be a bounded open domain with a smooth boundary.
Let $H_{\mathbf{D}}(t,\cdot,\cdot)$ be  the kernel of the heat semigroup $e^{t\Delta}$ on the domain $\mathbf{D}$
and $K^\alpha(\cdot,\cdot)$ be the Green function defined \mbox{via \eqref{Kern-For0}.} Then the following statements hold true.
\begin{enumerate}
\item Let $\bar{x}=(x_1,-x_2)$ be the reflection point of $x=(x_1,x_2)$.
If $\mathbf{D}$ is invariant by reflection with respect to the real axis, then
$$
\forall\, x,y\in \mathbf{D},\quad
H_{\mathbf{D}}(t,\bar{x},\bar{y}) = H_{\mathbf{D}}(t,x,y)\quad\hbox{and}\quad  K^\alpha(\bar{x},\bar{y}) =K^\alpha(x,y).
$$
\item If $e^{i\theta}\mathbf{D}=\mathbf{D}$ for some $\theta\in \mathbb{R}$, then
$$\forall\, x,y\in\mathbf{D},
\quad H_{\mathbf{D}}(t,e^{i\theta}x,e^{i\theta}y) = H_{\mathbf{D}}(t,x,y)\quad\hbox{and}\quad K^\alpha(e^{i\theta}x,e^{i\theta}y) = K^\alpha(x,y).
$$
\end{enumerate}
\end{lemma}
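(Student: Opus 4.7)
The plan is to exploit the fact that both planar isometries appearing in the lemma, the reflection $x \mapsto \bar x$ and the rotation $x \mapsto e^{i\theta}x$, commute with the Laplacian and, when they preserve $\mathbf{D}$, act unitarily on $L^2(\mathbf{D})$. This will force them to preserve each Dirichlet eigenspace of $-\Delta$, which in turn will yield the claimed symmetries of the heat kernel and, by the subordination identity \eqref{eq:K-H-rela}, of $K^\alpha$.

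Concretely, I would let $R$ denote either isometry and introduce the unitary operator $U_R\colon L^2(\mathbf{D})\to L^2(\mathbf{D})$ defined by $U_R f = f\circ R^{-1}$. Since $R(\mathbf{D})=\mathbf{D}$ and $R$ preserves both Lebesgue measure and Euclidean distances, $U_R$ is unitary and commutes with the Dirichlet Laplacian. Therefore $U_R$ maps each (finite-dimensional) eigenspace $V_{\lambda_j}$ into itself, and if $P_\lambda$ denotes the orthogonal projection onto $V_\lambda$, I obtain the intertwining $U_R P_\lambda = P_\lambda U_R$.

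I would then translate this intertwining into a pointwise statement on the kernel $K_\lambda(x,y) \triangleq \sum_{\phi_j\in V_\lambda}\phi_j(x)\phi_j(y)$ of $P_\lambda$, which is independent of the chosen orthonormal basis of $V_\lambda$. A straightforward change of variables applied to the identity $(U_R P_\lambda f)(x) = (P_\lambda U_R f)(x)$ gives $K_\lambda(Rx,Ry) = K_\lambda(x,y)$ for every $x,y\in\mathbf{D}$. Multiplying by $e^{-\lambda_j t}$ and summing over $j$, the eigenfunction representation \eqref{Heat-Str1} yields $H_{\mathbf{D}}(t,Rx,Ry) = H_{\mathbf{D}}(t,x,y)$ for all $t>0$. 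Inserting this identity into \eqref{eq:K-H-rela} and integrating in $t$ transfers the symmetry to $K^\alpha$, proving both (i) and (ii) simultaneously.

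An alternative route, in case one prefers to avoid eigenfunction expansions, is to invoke uniqueness of the Dirichlet fundamental solution: for $y\in\mathbf{D}$ fixed, the two functions $(t,x)\mapsto H_{\mathbf{D}}(t,x,y)$ and $(t,x)\mapsto H_{\mathbf{D}}(t,Rx,Ry)$ are bounded classical solutions of the heat equation on $(0,\infty)\times\mathbf{D}$ with vanishing Dirichlet trace (using $R(\partial\mathbf{D})=\partial\mathbf{D}$) and with common initial datum $\delta_y$ (using that $R$ is volume-preserving), whence uniqueness forces them to coincide. I do not anticipate any genuine obstacle. The only mild subtlety is that one should avoid trying to identify $\phi_j\circ R$ with $\pm\phi_j$, since eigenvalues may have multiplicity greater than one; working at the level of the projection kernels $K_\lambda$, or equivalently through the uniqueness argument, bypasses this difficulty cleanly.
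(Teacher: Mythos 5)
Your proposal is correct, and in fact you have given two valid proofs. Your ``alternative route'' --- fixing $y$, noting that $(t,x)\mapsto H_{\mathbf{D}}(t,Rx,Ry)$ solves the Dirichlet heat equation on $(0,\infty)\times\mathbf{D}$ with initial datum $\delta_y$ (using that $R$ preserves $\mathbf{D}$, its boundary, and Lebesgue measure), and concluding by uniqueness of the initial-boundary value problem --- is precisely the argument the paper gives, and the paper likewise passes to $K^\alpha$ through \eqref{eq:K-H-rela}.

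Your primary route, via the unitary $U_R f = f\circ R^{-1}$ on $L^2(\mathbf{D})$ and the projection kernels $K_\lambda$, is a genuinely different and equally legitimate argument. It is slightly more structural: it makes explicit that the symmetry is a consequence of $U_R$ commuting with the Dirichlet Laplacian and hence with every function of it (in particular with $e^{t\Delta}$ and with $(-\Delta)^{-1+\alpha/2}$ directly, without ever passing through the heat kernel if one wishes). You were right to flag the multiplicity subtlety; working with the basis-independent projection kernels $K_\lambda$ rather than with individual eigenfunctions $\phi_j$ is exactly the correct way to sidestep it. The only thing worth being explicit about in that version is that the series \eqref{Heat-Str1} converges absolutely and uniformly on compact subsets of $(0,\infty)\times\mathbf{D}\times\mathbf{D}$ (which follows from Weyl's law together with the eigenfunction bounds of Lemma~\ref{lem:phi}), so that the termwise substitution $x\mapsto Rx$, $y\mapsto Ry$ is justified at the level of the kernel rather than merely in $L^2$. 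The uniqueness argument avoids this point entirely, which is presumably why the paper adopts it.
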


\begin{proof}[Proof of Lemma $\ref{lem:HD-sym}$]
\textbf{(i)} Observe that for each $ y\in \mathbf{D}$,
\begin{equation}\label{eq:heat-kernal}
\begin{cases}
  \partial_t H_\mathbf{D}(t,x,y) - \Delta_x H_\mathbf{D}(t,x,y) = 0, \quad &  x\in \mathbf{D}, x\neq y,\\
  H_\mathbf{D}(t,x,y) = 0, \quad & x\in \partial \mathbf{D}, \\
  H_\mathbf{D}(0,x,y) = \delta(x-y),
\end{cases}
\end{equation}
where $\delta(\cdot)$ is the Dirac $\delta$-function centered at the origin.
Now we view $H_\mathbf{D}(t,\bar{x},\bar{y})$ as a function of $x$ and  $y$ as a parameter. Then, we can check  that
\begin{align*}
  H_\mathbf{D}(0,\bar{x},\bar{y})=\delta(\bar{x}-\bar{y})=\delta(x-y)=H_\mathbf{D}(0,x,y)
\end{align*}
 and $\Delta_x\big(H_\mathbf{D}(t,\bar{x},\bar{y})\big)
= \big(\Delta_x H_\mathbf{D}\big)(t,\bar{x},\bar{y})$. In addition, using the fact that
$\bar{x}\in \partial\mathbf{D} \Leftrightarrow x\in \partial \mathbf{D}$, we have $H_\mathbf{D}(t,\bar{x},\bar{y})=0$
for any $x\in \partial \mathbf{D}$. Hence, for each $y\in \mathbf{D}$, the mapping $x\in\mathbf{D}\mapsto  H_\mathbf{D}(t,\bar{x},\bar{y})$ solves \eqref{eq:heat-kernal}. Owing to the uniqueness of the initial-boundary value problem \eqref{eq:heat-kernal},
we conclude that $H_\mathbf{D}(t,x,y) = H_\mathbf{D}(t,\bar{x},\bar{y})$.
In view of \eqref{eq:K-H-rela}, the desired equality $K^\alpha(\bar{x},\bar{y})= K^\alpha(x,y)$ directly follows.

\textbf{(ii)} The proof of statement (ii) is quite similar as above, and thus we omit the details.
\end{proof}

Hereafter we shall give a precise description of the Green function $K^\alpha$ when  the domain $\mathbf{D}$ is the planar unit  disc $\mathbb{D}$. Actually, in this radial case    the eigenvalues and eigenfunctions of the spectral Laplacian $-\Delta$ on $\DD$
have precise expression formula through Bessel functions and thus the Dirichlet Green function $K^\alpha$ might be explicitly calculated.
We actually have the following result.
\begin{lemma}\label{lem:disc-exp}
Let $\mathbf{D}=\mathbb{D}$ be the unit disc of $\RR^2$ and let $K^\alpha$ given by \eqref{Kern-For}. Then
the eigenvalues and the eigenfunctions solving the spectral problem \eqref{def:phi-j} are described by  double index families $\{\lambda_{n,k}\}_{n\in \NN,k\geqslant 1}$ and $\{(\phi_{n,k}^{(1)}, \phi_{n,k}^{(2)})\}_{n\in\NN,k\geqslant 1}$ such that
\begin{align*}
  \lambda_{n,k}=x_{n,k}^2, \quad \phi_{n,k}^{(1)}(x)=J_n(x_{n,k}|x|)A_{n,k}\cos(n\theta),
  \quad \phi_{n,k}^{(2)}(x)=J_n(x_{n,k}|x|)A_{n,k}\sin(n\theta),
\end{align*}
where
\begin{align}\label{eq:Ank}
  \pi A_{0,k}^2=\frac{1}{J_{1}^2(x_{0,k})}\quad\hbox{and}\quad
  \pi A_{n,k}^2=\frac{2}{J_{n+1}^2(x_{n,k})},\quad\forall n\geqslant 1,
\end{align}
and $J_n$  denotes the first kind Bessel function of order $n$  and $\big\{x_{n,k}, k\geqslant1\big\}$ are its  zeroes.
Furthermore, we have
\begin{align}\label{eq:K-ss}
  K^\alpha(x,y)=\sum_{n\in\NN, k\geqslant 1}x_{n,k}^{\alpha-2}
  \Big( \phi_{n,k}^{(1)}(x)\phi_{n,k}^{(1)}(y) + \phi_{n,k}^{(2)}(x)\phi_{n,k}^{(2)}(y) \Big).
\end{align}
\end{lemma}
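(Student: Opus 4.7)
The plan is to perform separation of variables in polar coordinates. Writing $\phi(r,\theta)=R(r)\Theta(\theta)$, the Dirichlet eigenvalue problem $-\Delta \phi=\lambda\phi$ on $\mathbb{D}$ with $\phi|_{\partial\mathbb{D}}=0$ decouples via
\[
-\Theta''(\theta)=\mu\,\Theta(\theta),\qquad r^2 R''(r)+rR'(r)+(\lambda r^2-\mu)R(r)=0,\qquad R(1)=0,
\]
together with the requirement that $\Theta$ be $2\pi$-periodic and $R$ be bounded at $r=0$. The angular problem forces $\mu=n^2$ with $n\in\NN$, yielding the two-dimensional eigenspace $\mathrm{span}\{\cos(n\theta),\sin(n\theta)\}$ for $n\geqslant 1$ and the one-dimensional constant eigenspace for $n=0$. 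The radial ODE is Bessel's equation of order $n$, and the unique (up to scalar) solution bounded at $0$ is $r\mapsto J_n(\sqrt{\lambda}\,r)$. The boundary condition $R(1)=0$ then forces $\sqrt{\lambda}$ to be a positive zero $x_{n,k}$ of $J_n$, whence $\lambda_{n,k}=x_{n,k}^2$.

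Next I would pin down the normalization constants $A_{n,k}$ by computing $\int_\DD|\phi_{n,k}^{(\ell)}(x)|^2\,\dd x$ in polar coordinates. The angular integral gives $2\pi$ for $n=0$ and $\pi$ for $n\geqslant 1$. For the radial factor I use the classical Lommel/Bessel orthogonality identity
\[
\int_0^1 r\,J_n^2(x_{n,k}r)\,\dd r=\tfrac{1}{2}J_{n+1}^2(x_{n,k}),
\]
which may be proved by multiplying Bessel's equation by $2rR'$, integrating by parts on $[0,1]$, and using $J_n'(x_{n,k})=-J_{n+1}(x_{n,k})$ together with $J_n(x_{n,k})=0$. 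Imposing $\|\phi_{n,k}^{(\ell)}\|_{L^2}=1$ then yields precisely $\pi A_{0,k}^2=J_1^{-2}(x_{0,k})$ and $\pi A_{n,k}^2=2J_{n+1}^{-2}(x_{n,k})$ for $n\geqslant 1$, as stated in \eqref{eq:Ank}.

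The main step that is not merely computational is verifying that the doubly-indexed family $\{\phi_{n,k}^{(1)},\phi_{n,k}^{(2)}\}_{n\in\NN,\,k\geqslant 1}$ exhausts the spectrum and forms a complete orthonormal basis of $L^2(\DD)$. For this I would appeal to the spectral theorem for $(-\Delta)^{-1}$ with Dirichlet condition, which is a compact self-adjoint operator on $L^2(\DD)$, so its eigenfunctions form a complete orthonormal basis. The $\cos$/$\sin$ functions form a Hilbert basis of $L^2(\mathbb{T})$, and via Fourier decomposition in the angular variable $\theta$, any Dirichlet eigenfunction must be a finite linear combination of terms of the form $R_n(r)\cos(n\theta)$ and $R_n(r)\sin(n\theta)$ with $R_n$ solving the radial Bessel problem; the full list of such radial solutions has already been obtained. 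Orthogonality across different $(n,k)$ follows either from the self-adjointness when $\lambda_{n,k}\neq \lambda_{n',k'}$ or from orthogonality of $\cos$/$\sin$ in the angular variable and the Lommel relation for different zeros of $J_n$ in the radial variable.

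Finally, substituting these eigenpairs into the spectral series \eqref{Kern-For},
\[
K^\alpha(x,y)=\sum_{j\geqslant 1}\lambda_j^{\frac{\alpha}{2}-1}\phi_j(x)\phi_j(y),
\]
and re-indexing by $(n,k)$ with $\lambda_{n,k}^{\frac{\alpha}{2}-1}=x_{n,k}^{\alpha-2}$, summing the two contributions $\phi_{n,k}^{(1)}(x)\phi_{n,k}^{(1)}(y)+\phi_{n,k}^{(2)}(x)\phi_{n,k}^{(2)}(y)$, yields \eqref{eq:K-ss}. The main obstacle is the completeness statement; once that is granted, everything else reduces to the classical Bessel identities.
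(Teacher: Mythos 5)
Your proposal is correct and fills in, via the standard separation-of-variables argument in polar coordinates together with the Lommel normalization identity and completeness from the spectral theorem, precisely the classical computation that the paper simply delegates to Courant--Hilbert (Section 5.5 of Chapter V in \cite{CH09}). Once the eigenpairs are identified, both your proof and the paper's obtain \eqref{eq:K-ss} by direct substitution into \eqref{Kern-For}, so the two are mathematically the same argument, differing only in that you carry out the cited derivation explicitly.
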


\begin{proof}[Proof of Lemma $\ref{lem:disc-exp}$]
The explicit formula of the eigenvalues and  the normalized eigenfunctions of $-\Delta$ on the disc $\mathbb{D}$
are well-known,
and for instance one can refer to Section 5.5 of Chapter V in \cite{CH09} for the proof.
As a result, the formula \eqref{eq:K-ss} is an immediate consequence of the identity \eqref{Kern-For}.
\end{proof}
\begin{remark}
Let $x=\rho_1 e^{i\theta}, y=\rho_2e^{i\eta}\in\mathbb{D}$, then  by setting  $K^\alpha(x,y)=  G(\rho_1,\theta,\rho_2,\eta)$,
 we get from the expression \eqref{eq:K-ss}
\begin{align}\label{eq:G-exp}
  K^\alpha(x,y) & = G(\rho_1,\theta,\rho_2,\eta) =\sum_{n\in\NN, k\geqslant 1\atop  1\leqslant  j\leqslant  2}x_{n,k}^{\alpha-2}\phi^{(j)}_{n,k}
  \big(\rho_1 e^{i\theta}\big)\phi^{(j)}_{n,k}\big(\rho_2 e^{i\eta}\big) \\
  & = \sum_{n\in\NN\atop  k\geqslant 1} x_{n,k}^{\alpha-2} A_{n,k}^2 J_n(x_{n,k}\rho_1) J_n(x_{n,k}\rho_2)
  \big(\cos(n\theta)\cos(n\eta) + \sin(n\theta)\sin(n\eta) \big) \nonumber \\
  & = \sum_{n\in\NN\atop k\geqslant 1} x_{n,k}^{\alpha-2} A_{n,k}^2 J_n(x_{n,k}\rho_1) J_n(x_{n,k}\rho_2)
  \cos \big(n(\theta-\eta)\big).\nonumber
\end{align}
Straightforward computations yield
\begin{align}\label{eq:K-der}
		\nabla_xK^\alpha(x,y)=\begin{pmatrix}
		\partial_{\rho_1}G\cos \theta - \partial_{\theta}G \frac{\sin \theta}{\rho_1}\\
		\partial_{\rho_1}G\sin \theta + \partial_{\theta}G  \frac{\cos \theta}{\rho_1}
	\end{pmatrix},\quad
     \nabla_y K^\alpha (x,y)=\begin{pmatrix}
		\partial_{\rho_2}G \cos \eta - \partial_{\eta}G \frac{\sin \eta}{\rho_2}\\
		\partial_{\rho_2}G\sin \eta+\partial_{\eta}G  \frac{\cos \eta}{\rho_2}
	\end{pmatrix}.
\end{align}
These identities will be useful later in the explicit computation of the linearized operator at the equilibrium state, see the proof of Proposition  \ref{prop:other-condition}.

\end{remark}

\subsection{Singular kernel integrals on the torus}
In this subsection, we intend  to deal with  integrals with singular kernels  of the following type
\begin{equation}\label{eq:K-f}
  \mathcal{T}(f)(\theta) \triangleq \int_{\mathbb{T}}K(\theta,\eta)\, f(\eta)\dd\eta,
\end{equation}
where $\mathbb{T}$ is the periodic torus (identified with $[0,2\pi)$), $K:\mathbb{T}\times\mathbb{T}\to \mathbb{C}$
is a suitable singular kernel,
and $f: \TT\rightarrow \mathbb{C}$ is a $2\pi$-periodic function.
This structure will appear later when we will  explore  the regularity of the nonlinear operator in the rotating patches formalism, see Section \ref{sec:reg-F}.
The result that we shall present below is more or less classical and is analogous to \cite{HH15,MOV09}, and we shall provide a complete proof for the self-containing of the paper.

\begin{lemma}\label{lem:reg-1}
Let $0\leqslant \alpha<1$ and assume that there exists $C_0>0$ such that $K:\mathbb{T}\times\mathbb{T}\to \mathbb{C}$
satisfies the following properties.
\begin{enumerate}
\item $K$ is measurable on $\mathbb{T}\times\mathbb{T}\backslash\{(\theta,\theta),\, \theta\in \mathbb{T}\}$ and
\begin{equation}\label{eq:lem-k-1}
  |K(\theta,\eta)|\leqslant \frac{C_0}{|\sin \frac{\theta-\eta}{2}|^{\alpha}},\quad \forall \eta \neq \theta\in \TT.
\end{equation}
\item For each $\eta\in \mathbb{T}$, the mapping $\theta\mapsto K(\theta,\eta)$ is differentiable in
$\mathbb{T}\backslash\{\eta\}$ and
\begin{equation}\label{eq:lem-k-2}
  |\partial_{\theta}K(\theta,\eta)|\leqslant \frac{C_0}{|\sin \frac{\theta-\eta}{2}|^{1+\alpha}},
  \quad \forall \theta\neq \eta \in \TT.
\end{equation}
\end{enumerate}
Then the linear operator $\mathcal{T}$ given by \eqref{eq:K-f} is continuous from $L^\infty(\mathbb{T})$
to $C^{1-\alpha}(\mathbb{T})$. More precisely, there exists a constant $C_\alpha>0$ depending only on $\alpha$ such that
\begin{align}\label{eq:T-bdd}
  \Vert \mathcal{T}(f)\Vert_{C^{1-\alpha}}\leqslant C_\alpha C_0\Vert f\Vert_{L^\infty}.
\end{align}
\end{lemma}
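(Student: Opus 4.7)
The plan is to establish the two halves of the Hölder norm separately. The uniform bound $\|\mathcal{T}(f)\|_{L^\infty} \leqslant C_\alpha C_0 \|f\|_{L^\infty}$ follows immediately from assumption \eqref{eq:lem-k-1}: since $\alpha<1$, the integral $\int_\mathbb{T} |\sin\tfrac{\theta-\eta}{2}|^{-\alpha}\dd\eta$ is finite and independent of $\theta$ by translation invariance on $\mathbb{T}$.

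The heart of the proof is the Hölder seminorm: for $\theta_1,\theta_2\in\mathbb{T}$, set $\delta\triangleq|\sin\tfrac{\theta_1-\theta_2}{2}|$ (which is comparable to the geodesic distance on $\mathbb{T}$). I would use the standard "near-far" decomposition:
\begin{equation*}
  \mathcal{T}(f)(\theta_1)-\mathcal{T}(f)(\theta_2) = \int_{A_\delta} [K(\theta_1,\eta)-K(\theta_2,\eta)]f(\eta)\dd\eta + \int_{A_\delta^c}[K(\theta_1,\eta)-K(\theta_2,\eta)]f(\eta)\dd\eta,
\end{equation*}
where $A_\delta\triangleq\{\eta\in\mathbb{T}:|\sin\tfrac{\theta_1-\eta}{2}|\leqslant 2\delta\}$ is the near region and $A_\delta^c$ the far region.

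On $A_\delta$ I would simply apply the pointwise bound \eqref{eq:lem-k-1} to each of $K(\theta_1,\eta)$ and $K(\theta_2,\eta)$. The triangle-like inequality $|\sin\tfrac{\theta_2-\eta}{2}|\leqslant |\sin\tfrac{\theta_1-\eta}{2}|+\delta\leqslant 3\delta$ on $A_\delta$ shows that both integrals are controlled by
\begin{equation*}
  C_0\|f\|_{L^\infty}\int_0^{C\delta}\frac{\dd s}{s^\alpha} \leqslant C_\alpha C_0\|f\|_{L^\infty}\,\delta^{1-\alpha},
\end{equation*}
giving the right Hölder exponent. On $A_\delta^c$ I would apply the mean value theorem: for some $\theta^*$ on the short arc joining $\theta_1$ to $\theta_2$,
\begin{equation*}
  |K(\theta_1,\eta)-K(\theta_2,\eta)| \leqslant |\theta_1-\theta_2|\cdot|\partial_\theta K(\theta^*,\eta)| \leqslant \frac{C\,C_0\,\delta}{|\sin\tfrac{\theta^*-\eta}{2}|^{1+\alpha}}.
\end{equation*}
The key geometric observation is that on $A_\delta^c$ one has $|\sin\tfrac{\theta^*-\eta}{2}|\geqslant c\,|\sin\tfrac{\theta_1-\eta}{2}|$ for a universal $c>0$, since $\theta^*$ is within distance $\delta$ of $\theta_1$ while $|\sin\tfrac{\theta_1-\eta}{2}|\geqslant 2\delta$. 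Then
\begin{equation*}
  \int_{A_\delta^c}|K(\theta_1,\eta)-K(\theta_2,\eta)|\,|f(\eta)|\dd\eta \leqslant C\,C_0\|f\|_{L^\infty}\,\delta\int_{2\delta}^{\pi}\frac{\dd s}{s^{1+\alpha}} \leqslant C_\alpha C_0 \|f\|_{L^\infty}\,\delta^{1-\alpha},
\end{equation*}
where convergence at $s=2\delta$ uses $\alpha>0$ (while $\alpha=0$ is handled by a direct logarithmic computation that still yields a $\delta^{1-\alpha}=\delta$ bound up to a harmless constant, actually here $\alpha\geqslant 0$ and if $\alpha=0$ the upper integral gives a finite constant, so the bound becomes $\delta$; one must be slightly careful to verify the endpoint case separately). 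Combining the two pieces and using $\delta\leqslant C|\theta_1-\theta_2|$, I obtain the desired Hölder estimate \eqref{eq:T-bdd}.

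The main technical point, which I expect to be the only real subtlety, is the geometric comparability $|\sin\tfrac{\theta^*-\eta}{2}|\asymp|\sin\tfrac{\theta_1-\eta}{2}|$ on the far region $A_\delta^c$; this must be verified on the torus (not just locally on $\mathbb{R}$), so one should argue using the chordal distance $|\sin\tfrac{\cdot}{2}|$ directly, which satisfies a triangle inequality up to multiplicative constants. Everything else is routine integration of the explicit singular functions $s^{-\alpha}$ and $s^{-1-\alpha}$, where the exponents $\alpha<1$ and $\alpha\geqslant 0$ are used at the expected places.
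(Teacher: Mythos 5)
Your proof is essentially the same as the paper's: the same near--far decomposition of the torus around the singularity, the pointwise bound \eqref{eq:lem-k-1} on the near ball, the mean value theorem plus \eqref{eq:lem-k-2} on the complement, and the same geometric comparability $|\sin\tfrac{\theta^*-\eta}{2}|\gtrsim|\sin\tfrac{\theta_1-\eta}{2}|$ on the far region (which the paper proves via the chordal estimate $2|\sin\tfrac{\theta_1-\eta+(1-\kappa)(\theta_2-\theta_1)}{2}|\geqslant|\sin\tfrac{\eta-\theta_1}{2}|$). One small remark on the endpoint: your parenthetical claim that at $\alpha=0$ the far integral ``gives a finite constant'' is not correct --- $\int_{2\delta}^\pi s^{-1}\dd s=\log\tfrac{\pi}{2\delta}$ diverges logarithmically as $\delta\to 0$, so the far contribution is $O(\delta\log\tfrac1\delta)$ rather than $O(\delta)$; this means the proof (and indeed the paper's own final step $\int_r^1 w^{-1-\alpha}\dd w\lesssim r^{-\alpha}$) only yields the stated $C^{1-\alpha}$ conclusion for $\alpha\in(0,1)$, which is what is actually used downstream.
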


\begin{proof}[Proof of Lemma $\ref{lem:reg-1}$]
We first prove that $\mathcal{T}(f)$ is bounded on $\mathbb{T}$.
For every $\theta\in\mathbb{T}$, thanks to \eqref{eq:lem-k-1}, we see that
\begin{align*}
  \vert \mathcal{T}(f)(\theta)\vert \leq  C_0 \Vert f\Vert_{L^\infty}\Big\vert \int_{\mathbb{T}} \frac{\dd\eta}{\vert \sin(\frac{\theta-\eta}{2})\vert^\alpha}\Big\vert
  = C_0 \|f\|_{L^\infty} \int_{-\pi}^{\pi}\frac{\dd\eta}{|\sin \frac{\eta}{2}|^{\alpha}}
  \leqslant  C_\alpha C_0 \Vert f\Vert_{L^\infty}.
\end{align*}
Next, for every $\theta_1,\theta_2\in \mathbb{T}$ such that $0<|\theta_1-\theta_2|\leqslant \pi$,
it is obvious that
\begin{align*}
  \tfrac{2}{\pi}|\theta_1 - \theta_2| \leqslant |e^{i\theta_1}- e^{i\theta_2}| = 2 \big|\sin \tfrac{\theta_1 -\theta_2}{2}\big| \leqslant |\theta_1-\theta_2|.
\end{align*}
Set $r\triangleq |e^{i\theta_1}-e^{i\theta_2}|$ and define $B_r(\theta) \triangleq \big\{ \eta\in \mathbb{T}:\,\vert 2\sin (\frac{\eta-\theta}{2})\vert = |e^{i\theta}-e^{i\eta}|  \leq r\big\}$. Then we have
\begin{align*}
  \Big\vert \mathcal{T}(f)(\theta_1)-\mathcal{T}(f)(\theta_2)\Big\vert
  \leqslant &\Big\vert \int _{B_{3r}(\theta_1)}\vert f(\eta)\vert \vert K(\theta_1,\eta)\vert \dd\eta\Big\vert+
  \Big\vert \int _{B_{3r}(\theta_1)}\vert f(\eta)\vert \vert K(\theta_2,\eta)\vert \dd\eta\Big\vert \\
  & +\Big\vert \int _{B^c_{2r}(\theta_1)}\vert f(\eta)\vert \vert K(\theta_1,\eta)
  -K(\theta_2,\eta)\vert \dd\tau\Big\vert\\
  \triangleq & J_1+J_2+J_3.
\end{align*}
Applying \eqref{eq:lem-k-1} together with a change of  variables allow to get the estimate
\begin{align*}
  J_1+J_2 \leqslant & C_0 \Vert f\Vert_{L^\infty}
  \bigg(  \Big\vert \int _{B_{3r}(\theta_1)}\frac{\dd\eta}{\vert \sin (\frac{\eta -\theta_1}{2})\vert^\alpha}\Big\vert
  +\Big\vert \int _{B_{3r}(\theta_2)}\frac{\dd\eta}{\vert \sin(\frac{\eta -\theta_2}{2})\vert^\alpha}\Big\vert\bigg)\\
  \leqslant &C_{\alpha}C_0 \Vert f\Vert_{L^\infty}
   \Big\vert \int_0^{\frac{3}{2}r} \frac{ \dd w}{\vert w \vert^\alpha \sqrt{1- w^2}} \Big\vert
  \leqslant C_\alpha C_0\Vert f\Vert_{L^\infty}\vert \theta_1-\theta_2\vert^{1-\alpha}.
\end{align*}
To estimate the third term $J_3$, noting that for every $\eta \in B_{2r}^c(\theta_1)$ and for any $\kappa \in [0,1]$,
\begin{align*}
  2 \big|\sin \tfrac{\theta_1-\eta+(1-\kappa)(\theta_2-\theta_1)}{2}\big|
  & = |e^{i(\theta_1-\eta)}-e^{i(1-\kappa)(\theta_1-\theta_2)}| \\
  &\geqslant |e^{i(\theta_1-\eta)}-1|-|e^{i(1-\kappa)(\theta_1-\theta_2)}-1|\\
  &\geqslant |e^{i(\theta_1-\eta)}-1|-|e^{i(\theta_1-\theta_2)}-1| = |e^{i\eta} - e^{i\theta_1}| - |e^{i\theta_1} -e^{i\theta_2}| \\
  &\geqslant \tfrac{1}{2}|e^{i\eta} - e^{i\theta_1}| = \big|\sin \tfrac{\eta -\theta_1}{2} \big|.
\end{align*}
Therefore, applying  the mean value theorem, \eqref{eq:lem-k-2} and the preceding estimate  we  find
\begin{align*}
  |K(\theta_1,\eta)-K(\theta_2,\eta)| \leqslant C_\alpha C_0 \frac{|\theta_1-\theta_2|}{|\sin \frac{\eta -\theta_1}{2}|^{1+\alpha}},
  \quad \forall \,\eta\in B_{2r}^c(\theta_1).
\end{align*}
Consequently, we obtain
\begin{align*}
  J_3 \leqslant &C_{\alpha} C_0\Vert f\Vert_{L^\infty} \int _{B^c_{2r}(\theta_1)}
  \frac{\vert \theta_1-\theta_2\vert}{\vert \sin \frac{\eta -\theta_1}{2} \vert^{1+\alpha}}\dd \eta  \\
  \leqslant & C_\alpha  C_0\Vert f\Vert_{L^\infty} \int_r^1
  \frac{\vert \theta_1-\theta_2\vert}{\vert w \vert^{1+\alpha} \sqrt{1-w^2} } \dd w \\
  \leqslant & C_\alpha C_0\Vert f\Vert_{L^\infty}\vert \theta_1-\theta_2\vert^{1-\alpha}.
\end{align*}
Hence gathering the above estimates concludes the proof of \eqref{eq:T-bdd}.
\end{proof}

\subsection{Special functions}\label{Sect-Bess}
The  main task of this subsection is to recall Sneddon's formula which is very crucial in the spectral problem associated to the linearization of  the vortex patch equations around radial solutions that will be explored in \mbox{Section \ref{sec:spect-Stu}.} It allows in our context to derive a suitable representation of the angular velocities from which periodic solutions  bifurcate from Rankine vortices.
Before stating this formula, we need to remind some special functions and notations. First, the Gamma function $\Gamma:\mathbb{C}\setminus(-\NN)\rightarrow \mathbb{C}$
is the analytic continuation to the negative half plane of the usual Gamma function defined on $\{\mathrm{Re}\,z>0\}$
by the integral formula
\begin{align*}
  \Gamma(z) = \int_0^\infty t^{z-1} e^{-t} \dd t.
\end{align*}
It satisfies the classical   relation
\begin{align}\label{eq:Gam-prop}
  \Gamma(z + 1) = z \Gamma(z),\quad \forall z\in \mathbb{C}\setminus(-\NN).
\end{align}
On the other hand, for every $z\in \mathbb{C}$ we denote $(z)_n$ the Pokhhammer's symbol defined by
\begin{equation*}
(z)_n =
\begin{cases}
  z(z+1)\cdots (z+n-1),\quad &\textrm{if}\;\;n\geqslant 1,\\
  1,\quad & \textrm{if}\;\; n=0.
\end{cases}
\end{equation*}
The following relations are straightforward
\begin{align}\label{eq:(x)n}
  (z)_n = \frac{\Gamma(z+n)}{\Gamma(z)},\quad (z)_n = (-1)^n\frac{\Gamma(1-z)}{\Gamma(1-z-n)},
\end{align}
provided that all the right-hand quantities are well-defined. In what follows we intend to recall Bessel functions and some of their variations.
The Bessel functions of order $\nu\in\mathbb{C}$ is defined by
\begin{align*}
  J_{\nu}(z)=\sum_{m=0}^{\infty}\frac{(-1)^{m}\left(\frac{z}{2}\right)^{\nu +2m}}{m!\Gamma(\nu+m+1)},\quad|\mbox{arg}(z)|<\pi.
\end{align*}
Next, we shall introduce  Bessel functions of imaginary argument also called modified Bessel functions of first and second kind, (e.g. see p.66 of \cite{MOS66})
\begin{equation}\label{I-m-q}
  I_{\nu}(z)=\sum_{m=0}^{\infty}\frac{\left(\frac{z}{2}\right)^{\nu+2m}}{m!\Gamma(\nu+m+1)},\quad|\mbox{arg}(z)|<\pi
\end{equation}
and
\begin{align*}
  K_{\nu}(z)=\frac{\pi}{2}\frac{I_{-\nu}(z)-I_{\nu}(z)}{\sin(\nu\pi)},\quad\nu\in\mathbb{C}\setminus\mathbb{Z},\quad|\mbox{arg}(z)|<\pi.
\end{align*}
However, when $j\in\mathbb{Z},$ $K_{j}$ is defined through the formula  $K_{j}(z)=\displaystyle\lim_{\nu\rightarrow j}K_{\nu}(z).$
\\
It is known that for $\nu>0$ the  zeros of the Bessel function $J_\nu$ are given by a countable family $\{x_{\nu,k},k\in\NN\}$ of positive increasing numbers with the following asymptotics
\begin{align*}
  x_{\nu,k}=(k+\tfrac\nu2-\tfrac14)\pi+O(k^{-1}),\quad k\to \infty.
\end{align*}
We also recall that for any real numbers $c_1,c_2\in \mathbb{R}$, $c_3\in\mathbb{R}\setminus(-\mathbb{N})$
the hypergeometric function
$z\mapsto F(c_1,c_2; c_3;z) $ is defined on the open unit disc $\mathbb{D}$ by the power series
\begin{align*}
  F(c_1,c_2;c_3;z)= \sum_{n=0}^\infty \frac{(c_1)_n (c_2)_n}{(c_3)_n} \frac{z^n}{n!},\quad \forall z\in\mathbb{D},
\end{align*}
where $(x)_n$ is Pokhhammer's symbol. The hypergeometric series converges in the unit disc $\mathbb{D}$.
Assume that $\mathrm{Re}(c_3)> \mathrm{Re}(c_2)>0$, then the hypergometric function
has an integral representation as follows
\begin{align*}
  F(c_1,c_2;c_3;z) = \frac{\Gamma(c_3)}{\Gamma(c_2) \Gamma(c_3-c_2)}\int_0^1 x^{c_2-1} (1-x)^{c_3-c_2-1} (1-zx)^{-c_1 }\dd x,
  \quad |z|<1.
\end{align*}
When $\mathrm{Re}(c_1 + c_2 -c_3)<0$, the hypergeometric series is absolutely convergent on the closed unit disc and one has the following expression
\begin{align*}
  F(c_1,c_2;c_3;1) = \frac{\Gamma(c_3) \Gamma(c_3 -c_1-c_2)}{\Gamma(c_3-c_1) \Gamma(c_3-c_2)}.
\end{align*}

Now, we are ready to state Sneddon's formula that can be found for instance in $(2.2.9)$ \mbox{in \cite{Sne66}},
or $(24)$-$(25)$ in \cite{Mar22}.
\begin{lemma}\label{lem:sned-form}
  Let $0<a,b\leqslant 1$, $n,\beta,\gamma\in \NN$ and  $1<q <\beta + \gamma - 2n +2$. Then we have
\begin{equation}\label{eq:sned-form}
\begin{split}
  \sum_{k=1}^\infty \frac{J_\beta(a x_{n,k}) J_\gamma(b x_{n,k})}{x_{n,k}^q J_{n+1}^2(x_{n,k})}
  = \mathbb{J} + \frac{1}{\pi} \sin \big(\tfrac{\pi}{2}(\beta +\gamma -2n-q )\big)
  \int_0^\infty \rho^{1-q} I_\beta(a\rho) I_\gamma(b \rho) \frac{K_n(\rho)}{I_n(\rho)} \dd \rho,
\end{split}
\end{equation}
where
\begin{align}\label{def:J}
  \mathbb{J} & \triangleq  \frac{1}{\pi} \sin\big(\tfrac{\pi}{2}(\gamma-\beta + q)\big)
  \int_0^\infty \rho^{1-q} I_\beta(a\rho) K_\gamma(b\rho) \dd \rho \nonumber \\
  & = \frac{a^\beta \Gamma\big(1+\tfrac{\beta+\gamma -q}{2}\big)}{2^q b^{2+\beta-q}
  \Gamma(\beta+1)\Gamma(\tfrac{\gamma-\beta +q}{2})}
  F\big(1+\tfrac{\beta+\gamma -q}{2},1+\tfrac{\beta-\gamma -q}{2}; \beta+1;\tfrac{a^2}{b^2}\big).
\end{align}
In particular, if $a=b$, it holds that
\begin{align}\label{def:J2}
  \mathbb{J}|_{a=b} = \frac{\Gamma(1 + \tfrac{\beta+\gamma -q}{2}) \Gamma(q-1)}{2^q a^{2-q} \Gamma(\tfrac{\beta +\gamma + q}{2}) \Gamma (\tfrac{\gamma-\alpha +q}{2}) \Gamma(\tfrac{\beta-\gamma +q}{2})}.
\end{align}
\end{lemma}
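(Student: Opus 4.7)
The plan is to derive Sneddon's identity by complex contour integration in the spirit of Watson's treatise. The mechanism that generates the weights $1/J_{n+1}^2(x_{n,k})$ is the Wronskian relation between $J_n$ and the Bessel function of the second kind $Y_n$, namely $J_n(z)Y_n'(z) - J_n'(z) Y_n(z) = \tfrac{2}{\pi z}$. Evaluated at a zero $x_{n,k}$ of $J_n$ together with the identity $J_n'(x_{n,k}) = -J_{n+1}(x_{n,k})$, it forces $Y_n(x_{n,k}) = -\tfrac{2}{\pi x_{n,k} J_{n+1}(x_{n,k})}$; consequently $Y_n(z)/J_n(z)$ has a simple pole at each $x_{n,k}$ with residue exactly $-\tfrac{2}{\pi x_{n,k} J_{n+1}^2(x_{n,k})}$, which is the desired weight.

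Concretely, I would consider the meromorphic function
$$F(z) \triangleq z^{1-q}\, J_\beta(az)\, J_\gamma(bz)\,\frac{H_n^{(1)}(z)}{J_n(z)},\qquad H_n^{(1)} \triangleq J_n + iY_n,$$
and integrate it over a contour $\Gamma_{R,\varepsilon}$ that runs along $[\varepsilon, R]$ on the positive real axis (indented below each pole $x_{n,k}$), up the large quarter-arc of radius $R$ to the positive imaginary axis, down along $[iR, i\varepsilon]$, and back to $[\varepsilon,0]$ via a small quarter-arc. Cauchy's theorem then equates the residue sum, which reconstitutes $\sum_k \frac{J_\beta(a x_{n,k}) J_\gamma(b x_{n,k})}{x_{n,k}^q J_{n+1}^2(x_{n,k})}$ up to an explicit real constant, with the boundary integrals. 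The hypothesis $1 < q < \beta + \gamma - 2n + 2$ is precisely what ensures, via the standard asymptotics $J_\nu(z) \sim (z/2)^\nu/\Gamma(\nu+1)$ as $z\to 0$ and $J_\nu(z), H_n^{(1)}(z)\sim \sqrt{2/(\pi z)}\, e^{\pm iz}$ as $|z|\to\infty$, that both arcs contribute negligibly in the limits $\varepsilon\to 0$, $R\to\infty$ (the constraint $a+b\leq 2$ coming from $0<a,b\leq 1$ is used for the large arc). Working with $H_n^{(1)}$ rather than $Y_n$ is a convenient device: its exponential decay in the upper half-plane kills a spurious principal-value contribution along the real axis.

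The imaginary-axis segment is transformed by the substitution $z = i\rho$, $\rho > 0$, using $J_\nu(i\rho) = i^\nu I_\nu(\rho)$ and $H_n^{(1)}(i\rho) = \tfrac{2}{\pi i}\, i^{-n} K_n(\rho)$. This produces the integrand $\rho^{1-q} I_\beta(a\rho) I_\gamma(b\rho) K_n(\rho)/I_n(\rho)$ with overall phase $i^{\beta+\gamma-2n-q}$, whose imaginary part is the trigonometric prefactor $\sin\bigl(\tfrac{\pi}{2}(\beta+\gamma-2n-q)\bigr)$ on the right of \eqref{eq:sned-form}. The term $\mathbb{J}$ arises by regrouping, inside the integrand on the imaginary axis, the part of $Y_n$ proportional to $I_n$ against the remaining factor $I_\beta I_\gamma$: this produces an integral of the form $\int_0^\infty \rho^{1-q} I_\beta(a\rho) K_\gamma(b\rho)\,d\rho$ with a distinct phase $i^{\gamma-\beta+q}$ whose imaginary part gives $\sin\bigl(\tfrac{\pi}{2}(\gamma-\beta+q)\bigr)$, recovering the first equality in \eqref{def:J}. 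The closed-form evaluation of this Weber--Schafheitlin integral via Mellin transforms (or tables from \cite{Sne66}) then yields the hypergeometric expression in \eqref{def:J}; specialising $a=b$ and invoking Gauss's summation formula $F(c_1,c_2;c_3;1)=\tfrac{\Gamma(c_3)\Gamma(c_3-c_1-c_2)}{\Gamma(c_3-c_1)\Gamma(c_3-c_2)}$ produces \eqref{def:J2}.

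The main obstacle will be the careful bookkeeping of all phase factors and trigonometric prefactors through the contour deformation, together with the rigorous verification that the two arc integrals vanish (using the precise asymptotics of $J_\nu, H_n^{(1)}, I_\nu, K_\nu$) and that the small indentations around each real pole $x_{n,k}$ each contribute exactly half a residue. Once these analytic estimates are in place, identifying both sides of \eqref{eq:sned-form} reduces to purely algebraic regrouping.
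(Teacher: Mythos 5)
The paper itself does not prove \eqref{eq:sned-form}: it treats it as a known identity, citing formula $(2.2.9)$ of Sneddon's book and $(24)$--$(25)$ of Martin's 2022 paper, while Remark~\ref{rem:sned-form} derives the closed forms \eqref{def:J}--\eqref{def:J2} from the tabulated Weber--Schafheitlin integrals $6.576$ and $9.122$ of Gradshteyn--Ryzhik together with Gauss's evaluation of $F(c_1,c_2;c_3;1)$. You instead sketch a self-contained contour-integration proof of the full identity, which is the classical Kneser--Sommerfeld mechanism and in all likelihood the argument underlying the cited sources. The plan is sound: the Wronskian $J_nY_n'-J_n'Y_n=\tfrac{2}{\pi z}$ at a zero $x_{n,k}$, together with $J_n'(x_{n,k})=-J_{n+1}(x_{n,k})$, does give the residue of $Y_n/J_n$ there as $-\tfrac{2}{\pi x_{n,k}J_{n+1}^2(x_{n,k})}$, which is the Fourier--Bessel weight; using $H_n^{(1)}$ rather than $Y_n$ is exactly the right device for decay in the upper half-plane; the substitution $z=i\rho$ with $J_\nu(i\rho)=i^\nu I_\nu(\rho)$ and $H_n^{(1)}(i\rho)=\tfrac{2}{\pi i}i^{-n}K_n(\rho)$ produces $\rho^{1-q}I_\beta(a\rho)I_\gamma(b\rho)K_n(\rho)/I_n(\rho)$ with the correct phase; and the window $1<q<\beta+\gamma-2n+2$ is precisely what the arc estimates require, since $Y_n/J_n\sim z^{-2n}$ near the origin accounts for the $-2n$ and $q>1$ with $a+b\leqslant 2$ handles infinity. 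Two bookkeeping nits that do not affect viability: the Wronskian in fact yields $Y_n(x_{n,k})=+\tfrac{2}{\pi x_{n,k}J_{n+1}(x_{n,k})}$, not the minus sign you wrote (the residue you deduce from it is nonetheless correct), and ``indented below each pole'' (pole enclosed, full residue) is in tension with your later ``half a residue'' count --- you must commit to one convention. What your route buys is an internal, reference-free proof; what the paper's route buys is brevity by delegating the analytic subtleties to standard references. Since the lemma is purely auxiliary here, citation is the pragmatic choice, but your outline is the honest substitute if the sources were unavailable.
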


\begin{remark}\label{rem:sned-form}
For the explicit formula of  $\mathbb{J}$ stated  in the formula \eqref{def:J}-\eqref{def:J2},
it is enough to apply the following identities.
First, for every
$b > a >0$, $n,\beta,\gamma\in \NN$, $ q <2+\beta-\gamma$, $($see for instance $6.576$ of \cite{GR15}$)$
\begin{align*}
  \int_0^\infty \rho^{1-q} I_\beta(a\rho) K_\gamma(b\rho)\dd \rho
  =\frac{a^\beta \Gamma(1+\tfrac{\beta+\gamma-q}{2}) \Gamma(1-\frac{\gamma-\beta +q}{2})}{2^q b^{2+\beta-q}\Gamma(\beta+1)}
  F\big(1+ \tfrac{\beta+\gamma-q}{2},1+\tfrac{\beta-\gamma -q}{2};\beta+1;\tfrac{a^2}{b^2}\big).
\end{align*}
Second, it gives that in the particular case of
$a=b$, $n,\beta,\gamma\in \NN$, $1<q < 2+\beta-\gamma$, $($see for instance $9.122$ of \cite{GR15}$)$
\begin{align}\label{eq:J1-2}
  \int_0^\infty\rho^{1-q} I_\beta(a\rho) K_\gamma(a\rho) \dd \rho =
  \frac{\Gamma(1-\tfrac{\gamma -\beta + q}{2}) \Gamma(1+\tfrac{\beta +\gamma -q}{2}) \Gamma(q-1)}
  {2^q a^{2-q} \Gamma(\tfrac{\beta +\gamma + q}{2}) \Gamma(\tfrac{\beta -\gamma +q}{2})}\cdot
\end{align}
\end{remark}

\section{Boundary equation of rigid periodic patches}\label{sec:Time periodic motion of interfaces}
This section focuses on the vortex patch motion to the generalized SQG equation \eqref{Eq-gSQG}. In this setting, the solution takes at least for a short time  the form
 $\theta(t)={\bf{1}}_{D_t}$, where  $D_t\subset \DD$ is smooth and will be chosen close to the small disc  $b\DD$ ($0<b<1$).
Then identifying the complex plane $\mathbb{C}$ with $\mathbb{R}^{2}$, one might use the polar coordinates as follows,
see for instance \cite{HHM20},
\begin{equation}\label{def zk}
	z(t):\begin{array}[t]{rcl}
		\mathbb{T} & \mapsto & \partial D_{t}\\
		\theta & \mapsto & R(\theta)e^{i\theta}\triangleq\sqrt{b^{2}+2r(t,\theta)}
        e^{ i \theta}.
\end{array}
\end{equation}
We denote by $\mathbf{n}(t,z(t,\theta))={i}\partial_{\theta}z(t,\theta)$ an {inward normal }vector to the boundary $\partial D_{t}$ of the patch at the point $z(t,\theta)$.
According to \cite[p. 174]{HMV13}, the vortex patch equation writes
\begin{align*}
  \partial_{t}z(t,\theta)\cdot \mathbf{n}&=u(t,z(t,\theta))\cdot\mathbf{n}\\
  &=-\partial_{\theta}[\psi(t,z(t,\theta))],
\end{align*}
where $\psi(t,x) = (-\Delta)^{-1+ \frac{\alpha}{2}} \theta(t,x)$ is the stream function.
Notice that
\begin{align*}
  \psi(t,z(t,\theta))=\int_{{D}_t}K^\alpha (z(t,\theta),y)  \dd y.
\end{align*}
Now we shall write the patch equation in the particular case of  rotating domains $D_t=e^{it \Omega}D$ with some $\Omega\in\RR$,
that is,
\begin{align}\label{def:z-2}
  z(t,\theta)=e^{i t\Omega} z(\theta) = e^{it\Omega} \big(b^2 + 2r(\theta)\big)^{\frac{1}{2}}  e^{i\theta}.
\end{align}
Then making a change of variables we deduce from Lemma \ref{lem:HD-sym} that
\begin{align*}
  \psi(t,z(t,\theta))&=\int_D K^\alpha(e^{i t\Omega}z(\theta),e^{i t\Omega}y)  \dd y\\
  &=\int_D K^\alpha(z(\theta),y)  \dd y.
\end{align*}
In addition
\begin{align*}
  \partial_t z(t,\theta)=i\Omega z(t,\theta)=i\,\Omega\,e^{i t\Omega} \sqrt{b^{2}+2r(\theta)} e^{ i \theta},
\end{align*}
and
\begin{align*}
  \partial_{t}z(t,\theta)\cdot \mathbf{n}(t,z(t,\theta))&=\mbox{Im}\left(\partial_{t}z(t,\theta)\overline{\partial_{\theta}z(t,\theta)}\right)\\ & \stackrel{\eqref{def:z-2}}=\Omega r^\prime(\theta).
\end{align*}
Therefore we find the equation
\begin{align}\label{eq:rot-pat}
  \Omega  r^\prime(\theta)& =- \partial_\theta[\Psi(z(t,\theta))]\\
  &=- \partial_\theta\int_{{D}}K^\alpha(z(\theta),y) \dd y. \nonumber
\end{align}
Using the polar   coordinates yields
\begin{align}\label{F-psi}
\int_{{D}}K^\alpha(z(\theta),y) \dd y=\int_0^{2\pi} \int_0^{R(\eta)}K^\alpha\left(R(\theta)e^{i\theta},\rho e^{i\eta}\right) \rho
  \dd\rho \dd\eta ,\quad R(\eta)= \sqrt{b^{2}+2r(\eta)}.
\end{align}
According to Lemma \ref{lem:ker-exp}, we have the splitting
\begin{align}\label{eq:Kalp}
  K^\alpha(x,y) = K^\alpha_0(x,y) + K^\alpha_1(x,y) =  K^\alpha_0(x-y) + K^\alpha_1(x,y),
\end{align}
where $(x,y)\in \mathbb{D}^2\mapsto K^\alpha_1(x,y)$ is smooth and we make an abuse of notation
\begin{equation}\label{def:K0alp}
  K^\alpha_0(x,y) = K^\alpha_0(x-y) \triangleq \frac{4^{\alpha/2-1}\Gamma (\frac{\alpha}{2})}{\pi \Gamma(1-\frac{\alpha}{2})}\frac{1}{|x-y|^{\alpha}}
  =\frac{c_{\alpha}}{|x-y|^{\alpha}}\cdot
\end{equation}
Therefore, we get due to the fact $\nabla_xK_0^\alpha(x,y)=-\nabla_yK_0^\alpha(x,y)$,
\begin{align}\label{eq:P-F-psi} 	
  \partial_{\theta}\Psi(r(\theta))=
  &\int_0^{2\pi}\int_{0}^{R(\eta)}\nabla_x K^\alpha
  \left(R(\theta)e^{i\theta},\rho e^{i\eta}\right)\cdot
  \partial_\theta( R(\theta)e^{i\theta})\rho \dd\rho \dd\eta  \\
  =&\int_0^{2\pi} \int_0^{R(\eta)}\nabla_xK_1^\alpha
  \left(R(\theta)e^{i\theta},\rho e^{i\eta}\right)\cdot
  \partial_\theta(R(\theta)e^{i\theta}) \rho \dd\rho \dd\eta \nonumber \\
  &-\int_0^{2\pi} \int_{0}^{ R(\eta)}\nabla_yK_0^\alpha
  \left(R(\theta)e^{i\theta},\rho e^{i\eta}\right)\cdot
  \partial_\theta(R(\theta)e^{i\theta})\rho \dd\rho \dd\eta. \nonumber
\end{align}
To deal with the last integral term we apply Gauss-Green theorem,
\begin{align*}
  & \int_0^{2\pi} \int_0^{R(\eta)}\nabla_y K_0^\alpha
  \left(R(\theta)e^{i\theta},\rho e^{i\eta}\right) \cdot
  \partial_\theta\big(R(\theta)e^{i\theta}\big)\rho\, \dd\rho \dd\eta \\
  = & \iint_D \nabla_y K^\alpha_0\left(R(\theta) e^{i\theta}, y \right)\cdot
  \partial_\theta\big(R(\theta)e^{i\theta}\big)\, \dd y \\
  =&\int_0^{2\pi}K_0^\alpha \left( R(\theta)e^{i\theta},R(\eta) e^{i\eta}\right)
  \big(-i \partial_\eta( R(\eta)e^{i\eta})\big)\cdot \partial_\theta\big(R(\theta)e^{i\theta}\big) \dd\eta.
\end{align*}
Consequently, we find
\begin{equation}\label{eq:P-F-psi2}	
\begin{split}
  \partial_{\theta}\Psi(r(\theta))
  =& \int_0^{2\pi}\int_0^{R(\eta)}\nabla_xK_1^\alpha
  \left(R(\theta)e^{i\theta},\rho e^{i\eta}\right)\cdot
  \partial_\theta(R(\theta)e^{i\theta}) \rho \dd\rho \dd\eta \\
  &-  \int_0^{2\pi} K_0^\alpha\big(R(\theta)e^{i\theta} - R(\eta)e^{i\eta}\big)
  \big(-i \partial_\eta( R(\eta)e^{i\eta})\big)\cdot \partial_\theta\big(R(\theta)e^{i\theta}\big)\, \dd\eta.
\end{split}
\end{equation}
Hence the V-states equation reads as follows
\begin{equation}\label{eq-V-s}
\begin{split}
  F(\Omega,r(\theta))&\triangleq \Omega  r'(\theta)+\partial_{\theta}\Psi(r(\theta)) \\
  & \triangleq F_1(\Omega,r(\theta))+F_2(r(\theta)) =0,
\end{split}
\end{equation}
with
\begin{equation}\label{def:G1}
\begin{split}
  F_1(\Omega,r(\theta))
  & \triangleq \Omega r'(\theta) -  \int_0^{2\pi}
  K_0^\alpha\big(R(\theta)e^{i\theta} - R(\eta)e^{i\eta}\big)\,
  \big(-i \partial_\eta( R(\eta)e^{i\eta})\big)\cdot \partial_\theta\big(R(\theta)e^{i\theta}\big)\, \dd\eta \\
  & = \Omega r'(\theta) -  \int_0^{2\pi}
  K_0^\alpha\big(R(\theta)e^{i\theta} - R(\eta)e^{i\eta}\big)\,
  \mathrm{Im}\big(\partial_\eta(R(\eta)e^{i\eta}) \overline{\partial_\theta(R(\theta) e^{i\theta})}\big)\, \dd\eta
\end{split}
\end{equation}
and
\begin{align}\label{def:G2}
  F_2(r(\theta)) \triangleq \int_0^{2\pi}\int_0^{R(\eta)}\nabla_x K_1^\alpha
  \big(R(\theta)e^{i\theta},\rho e^{i\eta}\big)\cdot \partial_\theta(R(\theta)e^{i\theta}) \,\rho \dd\rho \dd\eta.
\end{align}
In the previous decomposition we recognize two terms:
the first one  $F_1$ is the same functional as in  the flat space $\mathbb{R}^2$ describing the induced patch  effect, see  (15) in \cite{HH15},
and the second one $F_2$ describes the rigid boundary effect on the patch.

\section{Linearization and regularity of the functional $F$}\label{sec:reg-F}

In order to apply the Crandall-Rabinowitz theorem stated in Theorem \ref{thm:C-R}, we need first to fix the function spaces and check the regularity of the functional $F$ introduced in \eqref{eq-V-s} with respect to these spaces. We should look for Banach spaces $X$ and $Y$ such that
$F: \RR\times X \rightarrow Y$ is well-defined and satisfies the assumptions of Theorem \ref{thm:C-R}.
Let $\alpha\in(0,1)$, $m\in\NN^+$ and consider the $m$-fold Banach  spaces
\begin{align*}
  X = X_m \triangleq \Big\{f\in C^{2-\alpha}(\mathbb{T}):\, f(\theta)=\sum_{n\geqslant 1}b_n\cos(nm\theta),\, b_n\in \RR,
  \, \theta\in\mathbb{T}\Big\}
\end{align*}
and
\begin{align*}
  Y = Y_m \triangleq \Big\{f\in C^{1-\alpha}(\mathbb{T}):\, f(\theta)=\sum_{n\geqslant 1}b_n\sin(nm\theta),\, b_n\in \RR,
  \, \theta\in\mathbb{T}\Big\}
\end{align*}
equipped  with their usual norms.
For $\epsilon_0\ll 1$, we denote by $B_{\epsilon_0}$ the open ball of $X_m$ with center $0$ and radius $\epsilon_0$,
\begin{align*}
  B_{\epsilon_0} \triangleq \big\{r\in X_m: \lVert r\rVert_{X_m} < \epsilon_0 \big\}.
\end{align*}
Recall from \eqref{eq:P-F-psi} and \eqref{eq-V-s} that
\begin{equation}\label{eq:F}
\begin{split}
  F(\Omega,r)
  & =\Omega \,r'(\theta)+\int_0^{2\pi}
  \int_0^{R(\eta)}\nabla_x K^\alpha\big(R(\theta) e^{i\theta},
  \rho e^{i\eta}\big) \cdot \Big(\frac{r'(\theta)}{R(\theta)} e^{i\theta}+ R(\theta) i e^{i\theta}\Big)
  \rho \dd\rho \dd\eta\\
  &= F_1(\Omega,r) + F_2(r)=0,
\end{split}
\end{equation}
where $\Omega \in \mathbb{R}$, $r\in X_m$ and the functionals $F_1,F_2$ are defined by \eqref{def:G1}-\eqref{def:G2}.\\
Notice that Rankine vortices are stationary solutions and therefore
\begin{equation}\label{eq:F(0)}
  F(\Omega,0)\equiv 0,\quad \forall\, \Omega \in \mathbb{R}.
\end{equation}
This can be analytically checked using the fact that the stream function is radial, which follows from the rotation invariance  of the Green function stated in Lemma \ref{lem:HD-sym} via the following identity
\begin{align}\label{eq:K-itheta}
  \nabla_x K^\alpha\big(be^{i\theta},\rho e^{i\eta}\big)\cdot (ie^{i\theta})
  = b^{-1}\partial_{\theta}G(b,\theta,\rho,\eta).
\end{align}
%
%
\subsection{Linearization}
The next goal is to linearize the nonlinear equation \eqref{eq:F} around an arbitrary small state $r.$ The computations below are formal that can be implemented from the Gateaux derivative
\begin{align*}
  \partial_r F(\Omega,r)h(\theta)=\left.\frac{\dd}{\dd s}F(\Omega,r+sh)\right|_{s=0}.
\end{align*}
They can  be rigorously checked in the Fr\'echet sense. Denoting by $B(r)(\theta)\triangleq \partial_{\theta}(R(\theta)e^{i\theta})$,
and via straightforward computations based on \eqref{eq:P-F-psi} and \eqref{eq-V-s}-\eqref{def:G2}  we get that for any $h\in X_m$,

\begin{align*}
  \partial_r F(\Omega,r)h(\theta) & = \Omega h'(\theta) +
  \int_0^{2\pi} \int_0^{R(\eta)} \nabla_x K^\alpha \big(R(\theta)e^{i\theta},
  \rho e^{i\eta}\big)\cdot \left.\frac{\dd}{\dd s}(B(r+sh)(\theta))\right|_{s=0} \rho\, \dd\rho \dd\eta \\
  & \quad + \frac{h(\theta)}{R(\theta)}\int_0^{2\pi} \int_0^{R(\eta)} \left(\nabla^2_x K_1^\alpha \big(R(\theta)e^{i\theta},
  \rho e^{i\eta}\big)\cdot e^{i\theta} \right)\cdot B(r)(\theta) \rho\, \dd\rho \dd\eta \\
  & \quad + \int_0^{2\pi}\nabla_x K_1^\alpha \big(R(\theta)e^{i\theta},R(\eta) e^{i\eta} \big)
  \cdot \partial_\theta\big( R(\theta)e^{i\theta}\big) h(\eta) \dd\eta\\
  &\quad -\int_0^{2\pi} K_0^\alpha \big(R(\theta)e^{i\theta} - R(\eta)e^{i\eta}\big)\bigg(-i \frac{\dd }{\dd s}B(r+sh)(\eta)\Big|_{s=0}\bigg)
  \cdot B(r)(\theta)\dd \eta \\
  & \quad - \int_0^{2\pi} \nabla_x K_0^\alpha\left(R(\theta)e^{i\theta} - R(\eta)e^{i\eta} \right)
  \cdot\Big(e^{i\theta} \frac{h(\theta)}{R(\theta)} - e^{i\eta} \frac{h(\eta)}{R(\eta)} \Big)
  \big(-iB(r)(\eta)\big)\cdot B(r)(\theta) \, \dd \eta.
\end{align*}
Noting that
\begin{equation*}
  \left.\frac{\dd}{\dd s} B(r+sh)(\theta)\right|_{s=0}=\partial_\theta \left(\frac{h(\theta)e^{i\theta}}{R(\theta)}\right)
  =\frac{h'(\theta)}{R(\theta)}e^{i\theta}
  + \partial_{\theta}\left(\frac{e^{i\theta}}{R(\theta)}\right)h(\theta),
\end{equation*}
and
\begin{equation}\label{eq:ted-identity}
  \partial_\eta \big(-if(\eta)e^{i\eta}\big) \cdot \partial_\theta \big(g(\theta)e^{i\theta}\big)
  = \partial_\eta \partial_\theta \big(f(\eta)g(\theta)\sin(\eta-\theta) \big),\quad \forall  f,g\in C^1,
\end{equation}
we can rewrite the above equation as
\begin{align}\label{F-Lin-r}
  \partial_r F(\Omega,r) h(\theta)& = \big[\Omega+V_1(r)(\theta)\big]
  h'(\theta)+V_2(r)(\theta)h(\theta)+ V_3(r,h)(\theta) + V_4(r,h)(\theta)
\end{align}
with
\begin{align}\label{Veq-F-1}
  V_1(r)(\theta) &
  \triangleq R^{-1}(\theta)\int_0^{2\pi} \int_{0}^{R(\eta)}
  \nabla_x K^\alpha \big(R(\theta)e^{i\theta},\rho e^{i\eta}\big)\cdot e^{i\theta} \rho \,\dd\rho \dd \eta,
\end{align}
and
\begin{equation}\label{Veq-F-2}
\begin{aligned}	
  V_2(r)(\theta)\triangleq &R^{-1}(\theta)\int_0^{2\pi}\int_0^{R(\eta)}
  \left(\nabla^2_x K_1^\alpha (R(\theta)e^{i\theta},\rho e^{i\eta})\cdot e^{i\theta}\right)\cdot
  \partial_\theta\big(R(\theta)e^{i\theta}\big)\,\rho\, \dd\rho \dd\eta \\		
 &+\int_0^{2\pi} \int_0^{R(\eta)} \nabla_x K^\alpha \big(R(\theta)e^{i\theta},
  \rho e^{i\eta}\big)\cdot \partial_\theta\left(\frac{e^{i\theta}}{R(\theta)}\right) \rho\, \dd\rho \dd\eta,
\end{aligned}
\end{equation}
and
\begin{equation}\label{Veq-F-3}
\begin{aligned}
  V_3(r,h)(\theta) \triangleq &\int_0^{2\pi}\nabla_x K_1^\alpha \big(R(\theta)e^{i\theta},R(\eta) e^{i\eta} \big)
  \cdot \partial_\theta( R(\theta)e^{i\theta}) h(\eta) \dd\eta\\
  &-\int_0^{2\pi} K_0^\alpha \big(R(\theta)e^{i\theta} - R(\eta)e^{i\eta}\big)L_A(h)\dd \eta,
\end{aligned}
\end{equation}
\begin{equation}\label{Veq-F-4}
\begin{split}
  V_4(r,h)(\theta)
  \triangleq & - \int_0^{2\pi} \nabla_x K_0^\alpha\left(R(\theta)e^{i\theta} - R(\eta)e^{i\eta} \right)
  \cdot\Big(e^{i\theta} \frac{h(\theta)}{R(\theta)} - e^{i\eta} \frac{h(\eta)}{R(\eta)} \Big) \\
  &\qquad\times
  \partial_\eta\partial_\theta\Big(R(\eta)R(\theta)\sin(\eta-\theta)\Big)\, \dd \eta,
\end{split}
\end{equation}
where
\begin{equation}\label{eq:LA}
\begin{aligned}
  L_A(h)  \triangleq \partial_\eta\partial_\theta\Big(\frac{h(\eta)R(\theta)\sin(\eta-\theta)}{R(\eta)}\Big).
\end{aligned}
\end{equation}

\subsection{Strong regularity}
This subsection is devoted to the regularity of the functional $F$ described by the formula \eqref{eq:F}. One gets the following result.
\begin{proposition}\label{prop:regularity-F1}
Let  $\alpha\in (0,1)$, there exists $\epsilon_0>0$ sufficiently small such that
the following statements hold true for any $m\in \mathbb{N}^+$.
\begin{enumerate}
\item $F:\mathbb{R}\times B_{\epsilon_0}\to Y_m$ is well-defined.
\item $F:\mathbb{R}\times B_{\epsilon_0}\to Y_m$ is of class $C^1$.
\item The partial derivative $\partial_{\Omega}\partial_r F: \mathbb{R}\times B_{\epsilon_0}\to \mathcal{L}(X_m,Y_m)$ exists and is continuous.
\end{enumerate}
\end{proposition}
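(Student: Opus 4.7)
The plan is to exploit the splitting $F=F_1+F_2$ provided by \eqref{eq:F} together with the kernel decomposition $K^\alpha=K_0^\alpha+K_1^\alpha$ from Lemma \ref{lem:ker-exp}, so that the analysis naturally separates into a singular but \emph{explicit} piece (treated exactly as in the planar gSQG case \cite{HH15}) and a smooth implicit piece coming from the boundary of the disc. The first step is to check that for $r\in B_{\epsilon_0}$ with $\epsilon_0$ small enough, the map $\theta\mapsto R(\theta)e^{i\theta}=\sqrt{b^2+2r(\theta)}\,e^{i\theta}$ parametrizes a smooth curve inside $\mathbb{D}$ staying uniformly away from $\partial\mathbb{D}$, so that all integrals in $F_1,F_2$ make sense and the composition with $K_1^\alpha$ is legitimate.

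For the well-definedness into $Y_m$, I would proceed in two independent blocks. For $F_2$, defined by \eqref{def:G2}, the integrand depends smoothly on $(\theta,\eta,\rho)$ because $K_1^\alpha\in C^\infty(\mathbb{D}\times\mathbb{D})$; hence $F_2(r)\in C^\infty(\mathbb{T})\subset C^{1-\alpha}(\mathbb{T})$, and its $C^k$-dependence on $r\in X_m$ is obtained by a standard differentiation-under-the-integral argument. For $F_1$, defined by \eqref{def:G1}, the kernel $K_0^\alpha(R(\theta)e^{i\theta}-R(\eta)e^{i\eta})=c_\alpha|R(\theta)e^{i\theta}-R(\eta)e^{i\eta}|^{-\alpha}$ is controlled, for $r$ small, by $C|\sin\frac{\theta-\eta}{2}|^{-\alpha}$; the same bound on $\partial_\theta K_0^\alpha$ gives an $|\sin\frac{\theta-\eta}{2}|^{-1-\alpha}$ pointwise control. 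Applying Lemma \ref{lem:reg-1} to the $\theta$-derivative of the integrand then yields the $C^{1-\alpha}$ regularity of $F_1(\Omega,r)$, with norm continuously depending on $(\Omega,r)$. The $\Omega\, r'(\theta)$ contribution lies in $C^{1-\alpha}(\mathbb{T})$ trivially since $r\in C^{2-\alpha}$.

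To land in $Y_m$ (sine series, $m$-fold), I would invoke the symmetries in Lemma \ref{lem:HD-sym}: the $m$-fold invariance $R(\theta+\frac{2\pi}{m})=R(\theta)$ combined with rotation invariance of $K^\alpha$ shows that $F(\Omega,r)(\theta+\frac{2\pi}{m})=F(\Omega,r)(\theta)$; similarly, reflection invariance $r(-\theta)=r(\theta)$ (coming from $X_m$ being cosine-based) combined with $K^\alpha(\bar x,\bar y)=K^\alpha(x,y)$ forces $F(\Omega,r)(-\theta)=-F(\Omega,r)(\theta)$, i.e.\ $F(\Omega,r)$ is odd and $m$-fold, hence its Fourier expansion contains only $\sin(nm\theta)$ modes. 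Together with the regularity obtained above, this gives $F:\mathbb{R}\times B_{\epsilon_0}\to Y_m$.

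For the $C^1$ regularity and the existence/continuity of $\partial_\Omega\partial_r F$, I would use the explicit formula \eqref{F-Lin-r} for the Gateaux derivative. The term $\Omega\, h'(\theta)$ and the multiplication operators $V_1(r)h', V_2(r)h$ reduce to showing that $V_1(r),V_2(r)\in C^{1-\alpha}(\mathbb{T})$ with continuous dependence on $r$, which again follows by splitting into $K_0^\alpha$ and $K_1^\alpha$ contributions and invoking Lemma \ref{lem:reg-1} for the singular part and classical differentiation of smooth integrals for the $K_1^\alpha$ part. The integral operators $V_3(r,h)$ and $V_4(r,h)$ are linear in $h$; the second one is the most delicate because of the singular kernel $\nabla_xK_0^\alpha$, but the key point is the cancellation $e^{i\theta}\frac{h(\theta)}{R(\theta)}-e^{i\eta}\frac{h(\eta)}{R(\eta)}$, which gains one factor of $|\sin\frac{\theta-\eta}{2}|$, bringing the kernel back to the class covered by Lemma \ref{lem:reg-1} at the critical exponent $\alpha$. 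This is the main technical obstacle. Once $\partial_r F$ is shown to be continuous, the identity $\partial_\Omega\partial_r F(\Omega,r)h=h'$ (independent of $(\Omega,r)$ and bounded from $X_m=C^{2-\alpha}$ to $Y_m=C^{1-\alpha}$) gives statement (iii) immediately, and the full $C^1$ statement (ii) follows by combining the continuity of $\partial_r F$ with the obvious continuity of $\partial_\Omega F(\Omega,r)=r'$.
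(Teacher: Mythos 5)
Your proposal takes essentially the same route as the paper: the same kernel splitting into the explicit singular piece $c_\alpha|x-y|^{-\alpha}$ plus the smooth remainder $K_1^\alpha$ from Lemma \ref{lem:ker-exp}; Lemma \ref{lem:reg-1} for the singular periodic integrals; Lemma \ref{lem:HD-sym} (reflection and rotation invariance of $K^\alpha$) to show $\Psi$ is even and $\tfrac{2\pi}{m}$-periodic so that $F=\Omega r'+\partial_\theta\Psi$ lands in $Y_m$; the decomposition \eqref{F-Lin-r} of $\partial_rF$ into $V_1,\dots,V_4$; the cancellation $e^{i\theta}h(\theta)/R(\theta)-e^{i\eta}h(\eta)/R(\eta)$ that gains one power of $|\sin\tfrac{\theta-\eta}{2}|$ in $V_4$; and the observation $\partial_\Omega\partial_rF(\Omega,r)h=h'$ which makes (iii) immediate.

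The one place your plan is materially lighter than the paper's argument is the continuity of $r\mapsto\partial_rF(\Omega,r)$ as a map $B_{\epsilon_0}\to\mathcal L(X_m,Y_m)$, which you assert (``with continuous dependence on $r$'', ``Once $\partial_rF$ is shown to be continuous\dots'') but do not outline. This is where the bulk of the proof of (ii) actually sits: it is not enough to bound each $V_j(r,\cdot)$ for fixed $r$. One must estimate differences such as $K_0^\alpha\big(R_1(\theta)e^{i\theta}-R_1(\eta)e^{i\eta}\big)-K_0^\alpha\big(R_2(\theta)e^{i\theta}-R_2(\eta)e^{i\eta}\big)$ and $\widetilde H_1-\widetilde H_2$ in $V_4$ as singular kernels in $(\theta,\eta)$ with small constants, which requires the key estimate \eqref{eq:R-diff}, namely that the difference of the two chord lengths is controlled by $\|r_1-r_2\|_{C^1}\,|\sin\tfrac{\theta-\eta}{2}|$, together with the elementary inequality \eqref{ineq:k-alpha}, and then an application of Lemma \ref{lem:reg-1} to the resulting difference kernel (and its $\theta$-derivative, which splits into three further pieces $H_1,H_2,H_3$). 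None of this is automatic from the fixed-$r$ bounds; you should make the difference estimates explicit, since they are the actual content of the $C^1$ assertion.
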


\begin{proof}[Proof of Proposition $\ref{prop:regularity-F1}$]
{\bf{(i)}} We shall use the expression of $F$ detailed in \eqref{eq-V-s}, \eqref{def:G1} and \eqref{def:G2}.
First, notice that the  regularity of $\theta\in\mathbb{T}\mapsto r'(\theta) $ is obvious since  $r^\prime\in Y_m$ whenever $r\in X_m$. Second,
 since $\rVert r\lVert_{C^{2-\alpha}}\le \epsilon_0\ll 1$, then
$x= R(\theta)e^{i\theta} = \sqrt{b^2 + 2r(\theta)} e^{i\theta}$
is in the compact set $B(0,\sqrt{b^2 +2\epsilon_0}) \subset \DD$.
Next we shall  check the regularity of  the second part of  $F_1$ given by \eqref{def:G1}. Set
\begin{align}\label{eq:K0-Hold}
  F_{1,1}(r(\theta)) & \triangleq \int_0^{2\pi}
  K_0^\alpha\big(R(\theta)e^{i\theta} - R(\eta)e^{i\eta}\big)\,
  \big(-i \partial_\eta( R(\eta)e^{i\eta})\big)\cdot \partial_\theta\big(R(\theta)e^{i\theta}\big)\, \dd\eta \nonumber \\
  & = \int_0^{2\pi} K^\alpha_0\big(R(\theta)e^{i\theta}- R(\eta)e^{i\eta} \big)
  \big( -i \partial_\eta (R(\eta)e^{i\eta})\big) \dd \eta \cdot \partial_\theta \big(R(\theta) e^{i\theta}\big).
\end{align}
Since $r\in C^{1-\alpha}(\TT)$ then using the law products we deduce that
\begin{equation}\label{eq:par-th-R}
 \theta\in \mathbb{T}\mapsto  \partial_\theta(R(\theta)e^{i\theta}) = \left( \tfrac{r'(\theta)}{R(\theta)}e^{i\theta}+i\, R(\theta)e^{i\theta}\right) \in C^{1-\alpha}(\TT).
\end{equation} Now, to analyze the regularity of the first term in the right-hand side of \eqref{eq:K0-Hold} it suffices to combine   Lemma \ref{lem:reg-1} with the estimates below
\begin{equation}\label{eq:K-d}
  \forall \theta\neq \eta \in\TT,\quad   |K^{\alpha}_0(R(\theta)e^{i\theta},R(\eta)e^{i\eta})|
  \leqslant C\left|\sin \tfrac{\theta-\eta}{2}\right|^{-\alpha},
\end{equation}
and
\begin{align}\label{eq:K0-d2}
  |\partial_\theta K^{\alpha}_0(R(\theta)e^{i\theta},R(\eta)e^{i\eta})|   & \leqslant
  C\left|\sin \tfrac{\theta-\eta}{2}\right|^{-(1+\alpha)},
\end{align}
in order to get
 \begin{align}\label{eq:K0-Hold11}
  \theta\in\mathbb{T}\mapsto  \int_0^{2\pi} K^\alpha_0\big(R(\theta)e^{i\theta}, R(\eta)e^{i\eta} \big)
  \big( -i \partial_\eta (R(\eta)e^{i\eta})\big) \dd \eta \in C^{1-\alpha}(\mathbb{T}).
\end{align}
Thus the classical law products give in view of \eqref{eq:K0-Hold}, $ F_{1,1}(r)\in C^{1-\alpha}(\mathbb{T})$ and then  $ F_{1}(r)\in C^{1-\alpha}(\mathbb{T})$. Now, let us check how to get \eqref{eq:K-d} and \eqref{eq:K0-d2}.
 The first  step is to show the following
\begin{equation}\label{eq:R-lip}
  \forall\, \theta,\eta\in\mathbb{R},\quad C_1\left|\sin \tfrac{\theta-\eta}{2}\right|\leqslant |R(\theta)e^{i\theta}-R(\eta)e^{i\eta}|\leqslant C_2\left|\sin \tfrac{\theta-\eta}{2}\right|,
\end{equation}
with some constants $0<C_1 \leqslant C_2 .$ To do so, we write
\begin{align*}
  R(\theta)e^{i\theta} - R(\eta)e^{i\eta} = b e^{i\eta} \big(e^{i(\theta -\eta)} - 1\big)
  + \Big(\big(R(\theta)e^{i\theta}
  -b e^{i\theta} \big) - \big(R(\eta)e^{i\eta} -b e^{i\eta} \big) \Big),
\end{align*}
\begin{align*}
  |\partial_\theta\big(R(\theta)e^{i\theta}-b e^{i\theta}\big)| \leqslant
  \frac{|r| + |r'(\theta)|}{\sqrt{b^2 - 2\epsilon_0}}\leqslant \frac{2\epsilon_0}{\sqrt{b^2 - 2\epsilon_0}},
\end{align*}
and
\begin{equation}\label{eq:sin-theta}
  \tfrac{2|\theta|}{\pi }\leqslant |e^{i\theta}-1|=|2\sin \tfrac{\theta}{2}|\leqslant |\theta|, \quad \text{ for }\; |\theta|\leqslant \pi,
\end{equation}
that we combine with Taylor's formula in order to get
\begin{equation}\label{es:Rthe}
  C_1\Big|\sin\tfrac{\theta -\eta}{2}\Big|\leqslant  |R(\theta)e^{i\theta}-R(\eta)e^{i\eta}|\leqslant C_2\Big|\sin\tfrac{\theta -\eta}{2}\Big|,
  \quad \forall\,|\theta-\eta|\leqslant \pi.
\end{equation}
For the general case  $\theta,\eta \in \mathbb{R}$ there exists a $k_0$
such that $|\theta+2 k_0\pi-\eta|\leqslant \pi$ and by periodicity we infer $R(\theta+2k_0\pi)e^{i(\theta+2k_0\pi)}=R(\theta)e^{i\theta}$, and then  \eqref{es:Rthe} applies, leading to \eqref{eq:R-lip}.
%
Therefore using  \eqref{eq:R-lip} and the explicit formula of $K^\alpha_0$ given in \eqref{def:K0alp}, we  find \eqref{eq:K-d}
and by differentiation
\begin{align*}
  |\partial_\theta K^{\alpha}_0(R(\theta)e^{i\theta},R(\eta)e^{i\eta})| &\leqslant |\nabla_x K_0^\alpha (R(\theta)e^{i\theta},
  R(\eta) e^{i\eta})| |\partial_\theta(R(\theta)e^{i\theta})|\nonumber\\
  & \leqslant   C\left|\sin \tfrac{\theta-\eta}{2}\right|^{-(1+\alpha)},
\end{align*}
achieving \eqref{eq:K0-d2}. The next task is to show that the functional
$F_2$ given by \eqref{def:G2} belongs to $C^{1-\alpha}(\mathbb{T}).$ Recall that \eqref{eq:par-th-R} holds true, then to get the suitable regularity for $F_2$ it is enough to show that
\begin{equation}\label{eq:G2-Hold}
 \theta\in\mathbb{T}\mapsto  F_{2,1}(r(\theta))\triangleq  \int_0^{2\pi}\int_0^{R(\eta)}\nabla_xK_1^\alpha(R(\theta) e^{i\theta},\rho e^{i\eta})
  \rho \,\dd\rho \dd\eta \in C^{1-\alpha}(\TT).
\end{equation}
Since $K_1^\alpha$ is smooth inside the domain $\mathbb{D}^2$ as in Lemma \ref{lem:ker-exp} and the patch is far away the boundary $\mathbb{T}=\partial \mathbb{D}$
then  it is plain that $F_{2,1}(r)\in L^\infty(\TT).$
To establish the H\"{o}lder regularity, we consider two points  $x_1= R(\theta_1) e^{i\theta_1}$ and $x_2 = R(\theta_2) e^{i\theta_2}$,
and write by the mean value theorem and the boundedness of $\nabla_x^2 K_1^\alpha$ in any compact set of $\mathbb{D}^2,$
\begin{align}\label{es:nab-K1}
  |F_{2,1}(r(\theta_1))-F_{2,1}(r(\theta_2))|
  \leqslant & \int_0^{2\pi}\int_0^{ R(\eta)}
  |\nabla_x K_1^\alpha(x_1,\rho e^{i\eta})-\nabla_x K_1^\alpha( x_2,\rho e^{i\eta})| \rho \dd\rho \dd\eta \nonumber \\
  \leqslant & |x_1 - x_2| \int_0^1\iint_{B(0,\sqrt{b^2 + 2\epsilon_0})}
  |\nabla_x^2 K_1^\alpha(\kappa x_1 + (1-\kappa) x_2,y)| \dd y \dd\kappa \nonumber \\
  \leqslant & C|R(\theta_1) e^{i\theta_1}-R(\theta_2) e^{i\theta_2}|,
\end{align}
where $C>0$ is independent of $x_1,x_2$. Therefore, applying \eqref{eq:R-lip} gives that $F_{2,1}(r)$ is Lipschitz and consequently it belongs to the space $C^{1-\alpha}$ achieving the proof of the claim \eqref{eq:G2-Hold}.
To sum, we have proven that the functions
$\theta\in\mathbb{T}\mapsto \partial_{\theta}\Psi(r(\theta)), F(\Omega,r(\theta)) $ belong to $C^{1-\alpha}(\mathbb{T})$. It remains to check the symmetry for these functions in order to be in the space $Y_m$. According to the first line of \eqref{eq-V-s}, it suffices to show  the symmetry for the function $\theta\in\mathbb{T}\mapsto \partial_{\theta}\Psi(r(\theta)).$

Now we want to  show that $\partial_\theta \Psi(r(\theta))$ and $F(\Omega,r(\theta))$ has the series expansion as in $Y_m$.
In light of \eqref{F-psi}, statement (i) of Lemma \ref{lem:HD-sym} and the fact that $R(-\eta)=R(\eta)$,
$\forall \eta\in \mathbb{R}$, one obtains
\begin{align*}
  \Psi(r(-\theta))&=\int_0^{2\pi} \int_{0}^{R(\eta)}K^{\alpha}(R(-\theta)e^{-i\theta},R(\eta)e^{i\eta})\rho \,\dd\rho \dd\eta\\
  &=\int_0^{2\pi} \int_0^{R(\eta)}K^{\alpha}(R(\theta)e^{-i\theta},R(\eta)e^{-i\eta})\rho\, \dd\rho  \dd\eta \\
  &=\int_0^{2\pi} \int_0^{R(\eta)}K^{\alpha}(R(\theta)e^{i\theta},R(\eta)e^{i\eta})\rho \, \dd\rho \dd\eta \\
  &=\Psi(r(\theta)),
\end{align*}
which implies that
\begin{align*}
  \Psi(r(\theta))=\sum_{n=0}^\infty c_n\cos n \theta,\quad\textrm{with}\;\;  c_n\in \mathbb{R}.
\end{align*}
Thanks to the statement (ii) of Lemma \ref{lem:HD-sym}, the fact that $R(\eta)=R(\eta+\frac{2\pi}{m})$ for every $\eta\in\RR$ and the change of variable $\eta\mapsto \eta+\tfrac{2\pi}{m}$
we deduce by elementary operations
\begin{align*}
  \Psi\Big(r\big(\theta+\tfrac{2\pi}{m}\big)\Big)
  & = \int_0^{2\pi} \int_0^{R(\eta)} K^\alpha\Big(R\big(\theta+\tfrac{2\pi}{m}\big)
  e^{i(\theta+\frac{2\pi}{m})},R(\eta)e^{i\eta}\Big)\rho  \, \dd\rho \dd\eta \\
  &=\int_0^{2\pi} \int_0^{R(\eta)} K^\alpha\big(R(\theta)e^{i(\theta+\frac{2\pi}{m})},R(\eta)e^{i(\eta+\frac{2\pi}{m})}\big)
  \rho \,\dd\rho \dd\eta \\
  &= \Psi(r(\theta)).
\end{align*}
Thus
\begin{align*}
  \Psi(r(\theta))=\Psi\Big(r\big(\theta+\tfrac{2\pi}{m}\big)\Big) & = \sum_{k=0}^{+\infty}c_n
  \cos\Big( n\theta+ \tfrac{2n\pi}{m}\Big) \\
  &=\sum_{k=0}^{+\infty} c_n \Big(\cos (n\theta)\,\cos \big(\tfrac{2n\pi}{m}\big)
  - \sin(n\theta)\, \sin\big(\tfrac{2n\pi}{m} \big) \Big).
\end{align*}
By the uniqueness of Fourier coefficients, we infer that
$\cos \big(\frac{2n\pi}{m}\big) =1$ and $\sin \big(\frac{2\pi n}{m} \big) = 0$.
Hence
\begin{equation*}
  \Psi(r(\theta))=\sum_{n=0}^{+\infty} c_{nm} \cos (nm \theta),\quad \textrm{with}\;\; c_{nm}\in \mathbb{R}.
\end{equation*}
Consequently $\partial_{\theta}\Psi(r)\in Y_m$ and therefore $F: \RR\times B_{\epsilon_0}\rightarrow Y_m$ is well-defined.

{\bf{(ii)}} It amounts to showing that the partial derivatives $\partial_{\Omega}F$ and $\partial_rF$
exist in the Frechet  sense and they are both continuous. For $\partial_\Omega F$, it is obvious to see that
\begin{align*}
  \partial_{\Omega}F(\Omega,r)(\theta)= r'(\theta),
\end{align*}
and thus $\partial_\Omega F $ is a linear bounded operator from $X_m$ to $Y_m$ and is independent of $\Omega$.
Then we only need to check that $\partial_r F(\Omega,r)\in \mathcal{L}(X_m,Y_m)$ and it is continuous.
In light of \eqref{F-Lin-r}-\eqref{Veq-F-4}, we have the decomposition
\begin{equation}\label{eq:pa-rF-dec}
\begin{split}
  \partial_r F(\Omega,r) h(\theta) = \big[\Omega+V_1(r)(\theta)\big] h'(\theta)
  +  V_2(r)(\theta)h(\theta) + V_3(r,h)(\theta)+ V_4(r,h)(\theta).
\end{split}
\end{equation}	
For the first term on the right-hand side of \eqref{eq:pa-rF-dec}, we may argue as for the estimate of  $\partial_{\theta}\Psi(r(\theta))$ developed in the preceding point \textbf{(i)}, leading to
\begin{equation*}
 \theta\mapsto  \int_0^{2\pi} \int_0^{R(\eta)}(\nabla_x K^\alpha)\left(R(\theta)e^{i\theta},\rho e^{i\eta}\right)\cdot
  e^{i\theta} \rho \,\dd\rho \dd \eta \in C^{1-\alpha}(\TT),
\end{equation*}
and thus combined with the fact that $R^{-1}(\theta)\in C^{2-\alpha}$,
we deduce from the law products that
\begin{align}\label{es:I1}
 \nonumber  \lVert (\Omega + V_1(r))h'\rVert_{C^{1-\alpha}}&\lesssim \lVert h'\rVert_{C^{1-\alpha}}
  +\lVert V_1(r)\rVert_{C^{1-\alpha}}\lVert h'\rVert_{C^{1-\alpha}}\\
  &   \lesssim \lVert h\rVert_{C^{2-\alpha}}.
\end{align}
For the second term $V_2(r)h$, described in \eqref{Veq-F-2},  one easily obtains
\begin{align*}
  \lVert V_2(r)h \rVert_{C^{1-\alpha}} \leqslant C \lVert V_2(r)\rVert_{C^{1-\alpha}} \lVert h\rVert_{C^{1-\alpha}}.
\end{align*}
To check that $V_2(r)$ belongs to $C^{1-\alpha}(\mathbb{T})$,
it is enough to show that
\begin{align*}
  \theta\mapsto \int_0^{2\pi}\int_0^{R(\eta)}\left(\nabla_x^2 K^\alpha_1\big(R(\theta)e^{i\theta},\rho e^{i\eta}\big)\cdot
  e^{i\theta}\right)\cdot \partial_\theta\big(R(\theta)e^{i\theta}\big)\rho\, \dd\rho \dd\eta \in C^{1-\alpha}(\mathbb{T}),
\end{align*}
since the treatment of the remaining  term in \eqref{Veq-F-2} is quite similar to $V_1(r)$.
This latter term is easily estimated from the product law since $K_1^\alpha$ is highly smooth inside the domain $\mathbb{D}^2.$
For the third term $V_3(r,h)$ given by \eqref{Veq-F-3}, one can easily estimate as before  the first term connected to $K_1^\alpha$: \begin{align*}
  \left\lVert\theta\mapsto \int_{0}^{2\pi}\nabla_xK^{\alpha}_1 \big(R(\theta)e^{i\theta},R(\eta) e^{i\eta}\big) \cdot \partial_\theta\big(R(\theta)e^{i\theta}\big)h(\eta)\,\dd \eta\right\rVert_{C^{1-\alpha}}\leqslant C\lVert h\rVert_{L^{\infty}}.
\end{align*}
As to  the remaining term in $V_3(r,h)$,
noting that
\begin{align*}
  L_A(h) = \mathrm{Im}\Big(\partial_\eta\big( \tfrac{h(\eta)}{R(\eta)} e^{i\eta} \big)\,
  \overline{\partial_\theta(R(\theta) e^{i\theta})}\Big),
\end{align*}
one finds by using \eqref{eq:par-th-R}, \eqref{eq:K-d}
and Lemma \ref{lem:reg-1},
\begin{align*}
  & \left\lVert\theta\mapsto \int_0^{2\pi}K_0^{\alpha}\big(R(\theta)e^{i\theta} - R(\eta)e^{i\eta}\big)L_A(h)\dd\eta\right\rVert_{C^{1-\alpha}} \\
  & \lesssim \left\lVert \int_0^{2\pi} K_0^\alpha\big(R(\theta)e^{i\theta} - R(\eta)e^{i\eta}\big)
 \partial_\eta\left(\tfrac{h(\eta)e^{i\eta}}{R(\eta)} \right) \dd \eta \right\rVert_{C^{1-\alpha}}
  \|\partial_\theta\big(R(\theta)e^{-i\theta} \big)\|_{C^{1-\alpha}} \\
  & \lesssim \lVert h\rVert_{C^1}.
\end{align*}
Combining with the above two estimates leads to
\begin{align}\label{eq:I-3-bound}
  \lVert V_3(r,h)\rVert_{C^{1-\alpha}}\leqslant C\lVert h\rVert_{C^1}.
\end{align}
Now we focus on the last term $V_4(r,h)$ given by \eqref{Veq-F-4}.
In view of \eqref{eq:ted-identity}, we write it as
\begin{align*}
  V_4(r,h) = & \, \int_0^{2\pi} \widetilde{H}(\theta,\eta) \big(i\partial_\eta(R(\eta)e^{i\eta})\big) \dd\eta
  \cdot \big(\partial_\theta \big(R(\theta)e^{i\theta}\big)\big) ,
\end{align*}
with
\begin{align}\label{eq:tildH}
  \widetilde{H}(\theta,\eta) \triangleq \nabla_x K_0^\alpha\big(R(\theta)e^{i\theta} - R(\eta) e^{i\eta}\big)\cdot
  \left(e^{i\theta} \frac{h(\theta)}{R(\theta)}-e^{i\eta} \frac{h(\eta)}{R(\eta)}\right).
\end{align}
From the mean value theorem we infer
\begin{align*}
  \left| e^{i\theta} \frac{h(\theta)}{R(\theta)} - e^{i\eta} \frac{h(\eta)}{R(\eta)}\right|
  \leqslant C|\theta-\eta|\lVert h\rVert_{C^1},
\end{align*}
and since $h$ and $R$ are $2\pi$-periodic functions then we  we can argue as \eqref{eq:R-lip} in order
to get
\begin{align}\label{eq:h-R-diff}
  \left| e^{i\theta} \frac{h(\theta)}{R(\theta)}-e^{i\eta} \frac{h(\eta)}{R(\eta)}\right|
  \leqslant C\left|\sin \tfrac{\theta-\eta}{2}\right| \lVert h\rVert_{C^1}.
\end{align}
Combining this  with  \eqref{def:K0alp} and \eqref{eq:K-d}-\eqref{eq:K0-d2} allows to get
\begin{align*}
  |\widetilde{H}(\theta,\eta)| \leqslant  C\frac{\lVert h\rVert_{C^1}}{\big|\sin \tfrac{\theta-\eta}{2}\big|^\alpha},\quad
  |\partial_\theta \widetilde{H}(\theta,\eta)| \leqslant C
  \frac{\lVert h\rVert_{C^1}}{\big|\sin \tfrac{\theta-\eta}{2}\big|^{1+\alpha}},\quad \forall \eta\neq \theta \in \TT.
\end{align*}
Applying Lemma \ref{lem:reg-1} and \eqref{eq:par-th-R}, we derive that
\begin{equation}\label{eq:I-4-bound}
  \lVert V_4(r,h)\rVert_{C^{1-\alpha}}\leqslant C\lVert h\rVert_{C^1}.
\end{equation}
In conclusion, gathering the above estimates, we prove that
\begin{equation*}
  \lVert \partial_rF(\Omega,r)h\rVert_{C^{1-\alpha}}\leqslant C\lVert h\rVert_{C^{2-\alpha}},
\end{equation*}
which implies that $\partial_r F(\Omega,r)\in \mathcal{L}(X_m,Y_m)$.

The next step is to prove that for given $\Omega\in\mathbb{R}$, $\partial_rF(\Omega, r)$
is a continuous mapping taking values in the space of bounded linear operators from $X_m$ to $Y_m$.
In other words, we will show that, for every $r_1$, $r_2\in B_{\epsilon_0}\subset X_m$,
\begin{equation*}
  \sup_{\lVert h\rVert_{C^{2-\alpha}}\leq 1}\lVert \partial_rF(\Omega,r_1)h-\partial_rF(\Omega,r_2)h\rVert_{C^{1-\alpha}}
  \to 0,  \quad \text{as }\;\lVert r_1-r_2\rVert_{C^{2-\alpha}} \to 0.
\end{equation*}
Thanks to \eqref{eq:pa-rF-dec} and the algebra structure of H\"older spaces, this can be guaranteed by the following continuous result that
as $\lVert r_1-r_2\rVert_{C^{2-\alpha}}\to 0$,
\begin{equation}\label{eq:Ij-cont}
  \sum_{j=1}^2\|V_j(r_1)-V_j(r_2)\|_{C^{1-\alpha}} +
  \sup_{\lVert h\rVert_{C^{2-\alpha}}\leq 1} \sum_{j=3}^{4} \lVert V_j(r_1,h) - V_j(r_2,h)\rVert_{C^{1-\alpha}}\to 0.
\end{equation}
To prove the continuity  result \eqref{eq:Ij-cont} regarding $V_1$, given by \eqref{Veq-F-1}, we proceed first  in a similar way
as in the derivation \eqref{eq:P-F-psi2}, by writing
\begin{align}\label{eq:V1-dec}
  V_1(r)(\theta) 
  =&R^{-1}(\theta)\int_0^{2\pi}
  \int_0^{R(\eta)} \big(-\nabla_y K^\alpha_0 + \nabla_x K^\alpha_1\big)\big(R(\theta)e^{i\theta} ,\rho e^{i\eta}\big)
  \cdot  e^{i\theta}\rho \, \dd\rho \dd\eta \nonumber \\
  =& \frac{1}{R(\theta)}\int_0^{2\pi} \int_0^{R(\eta)}
  \nabla_x K^\alpha_1 (R(\theta)e^{i\theta},\rho e^{i\eta})\cdot e^{i\theta}\rho \,\dd\rho \dd\eta \nonumber \\
  & + \frac{1}{R(\theta)} \int_0^{2\pi} K^\alpha_0\big(R(\theta)e^{i\theta} - R(\eta)e^{i\eta}\big) \,
  \big(i \partial_\eta(R(\eta)e^{i\eta})\big) \cdot e^{i\theta} \, \dd\eta \nonumber \\
  \triangleq &\, \frac{V_{11}(r)(\theta) + V_{12}(r)(\theta)}{R(\theta)},
\end{align}
with $R(\theta) \triangleq \sqrt{b^2 + 2 r(\theta)}$.
The continuity of the $r\mapsto R$ is immediate. Then it remains to explore the continuity of the mappings $r\mapsto V_{11}(r),  V_{12}(r)$. With the notation  $R_j(\theta) \triangleq \sqrt{b^2 + 2 r_j(\theta)}$
one gets
\begin{align*}
  & V_{11}(r_1)-V_{11}(r_2)  = \int_0^{2\pi}\int_{R_2(\eta)}^{R_1(\eta)}
  \nabla_x K^\alpha_1 \big(R_1(\theta) e^{i\theta},\rho e^{i\eta}\big) \cdot e^{i\theta} \rho \,\dd\rho \dd\eta  \\
  & \quad +
  \int_0^{2\pi} \int_0^{R_2(\eta)} \Big(\nabla_x K^\alpha_1\big(R_1(\theta)e^{i\theta},\rho e^{i\eta}\big) -
  \nabla_x K^\alpha_1 \big(R_2(\theta) e^{i\theta},\rho e^{i\eta}\big)\Big)\cdot e^{i\theta} \rho\, \dd\rho \dd\eta .
\end{align*}
In view of the fact that $|R_1(\theta)-R_2(\theta)| \leqslant \frac{|r_1(\theta) -r_2(\theta)|}{\sqrt{b^2-2\epsilon_0}}$
and as $K_1^\alpha$ is smooth in $\mathbb{D}^2$, we can use the mean value theorem in order to get
\begin{align*}
  \|V_{11}(r_1) - V_{11}(r_2)\|_{L^\infty} \leqslant C \|r_1 -r_2\|_{L^\infty}.
\end{align*}
Similarly, by $|R_1'(\theta) -R_2'(\theta)| \leqslant C \frac{\|r_1 - r_2\|_{C^1}}{\sqrt{b^2 -2\epsilon_0}} $,
we obtain
\begin{align*}
  \|V_{11}(r_1) - V_{11}(r_2)\|_{C^1} \leqslant C \|r_1 - r_2\|_{C^1}.
\end{align*}
Hence we find by Sobolev embedding,
\begin{align*}
  \|V_{11}(r_1) - V_{11}(r_2)\|_{C^{1-\alpha}} \leqslant C \|r_1 - r_2\|_{C^{2-\alpha}}.
\end{align*}
For $V_{12}$ defined by \eqref{eq:V1-dec}, we write
\begin{align*}
  V_{12}(r_1)-V_{12}(r_2)
  &=  \int_0^{2\pi} H(\theta,\eta)\,\partial_\eta\big(iR_1(\eta)e^{i\eta}\big)\cdot e^{i\theta} \dd \eta \\
  &\quad + \int_0^{2\pi} K^\alpha_0
  \big(R_2(\theta)e^{i\theta} - R_2(\eta)e^{i\eta}\big) \,
  \partial_\eta\big(iR_1(\eta)e^{i\eta}-iR_2(\eta)e^{i\eta}\big)\cdot e^{i\theta} \, \dd\eta \\
  & \triangleq \, J_1 + J_2 ,
\end{align*}
where
\begin{align}\label{eq:kernelH}
  H(\theta,\eta) \triangleq K_0^\alpha\big(R_1(\theta)e^{i\theta} - R_1(\eta)e^{i\eta}\big)
  - K_0^\alpha\big(R_2(\theta)e^{i\theta} - R_2(\eta)e^{i\eta}\big).
\end{align}
To estimate $J_2$ we proceed as in   the estimate \mbox{of  \eqref{eq:K0-Hold},}  leading to
\begin{align*}
  \left\lVert J_2\right\rVert_{C^{1-\alpha}}
  &\leqslant C \|\partial_\eta \big(R_1(\eta)e^{i\eta} - R_2(\eta)e^{i\eta}\big)\|_{L^\infty}\\
  &\leqslant C\lVert r_1-r_2\rVert_{C^{1}}.
\end{align*}
As to  the first term $J_1$,
We will apply Lemma \ref{lem:reg-1} to control the above right-hand term.
Note that
\begin{align*}
   \big |\big(R_1(\theta) - R_2(\theta)\big)e^{i\theta} - \big(R_1(\eta)-R_2(\eta)\big) e^{i\eta}\big| &
  \leqslant \big \lVert \partial_\theta\big((R_1(\theta) - R_2(\theta))e^{i\theta}\big) \big\rVert_{L^{\infty}} |\theta-\eta|\\
  &\leqslant  C\lVert r_1-r_2\rVert_{C^1}|\theta-\eta|,
\end{align*}
which implies that (owing to \eqref{es:Rthe} and the $2\pi$-periodicity of $R_i$)
\begin{align}\label{eq:R-diff}
   \nonumber \big| |R_1(\theta)e^{i\theta} - R_1(\eta)e^{i\eta}| - |R_2(\theta)e^{i\theta} - R_2(\eta) e^{i\eta}|\big|  \leqslant& \big |(R_1(\theta) - R_2(\theta))e^{i\theta}-(R_1(\eta)-R_2(\eta))e^{i\eta}\big|\\\leqslant &C\lVert r_1-r_2\rVert_{C^1}\left|\sin \tfrac{\theta-\eta}{2}\right|.
\end{align}
Using \eqref{def:K0alp}, \eqref{eq:R-lip}, \eqref{eq:R-diff} and the following inequality
\begin{align*}
  |A^\alpha - B^\alpha|\leqslant C_\alpha \big( A^{\alpha -1}+B^{\alpha -1}\big) |A-B|, \,\quad \forall A,B>0,
\end{align*}
applied with $A=|R_1(\theta)-R_1(\eta)|^{-1}$ and $B=|R_2(\theta)-R_2(\eta)|^{-1}$
we obtain
\begin{align*}
  |H(\theta,\eta)| \leqslant C\frac{ \lVert r_1-r_2\rVert_{C^1}}{\big|\sin \tfrac{\theta-\eta}{2}\big|^\alpha}.
\end{align*}
On the other hand, straightforward computations yield the splitting
\begin{align*}		
  \partial_{\theta}H(\theta,\eta) & = \nabla_x K^\alpha_0\big(R_1(\theta)e^{i\theta} - R_1(\eta)e^{i\eta} \big)\cdot
  \partial_\theta\big(R_1(\theta)e^{i\theta}\big)\\
  &\quad -  \nabla_x K^\alpha_0\big(R_2(\theta)e^{i\theta} - R_2(\eta)e^{i\eta} \big)\cdot
  \partial_\theta\big(R_2(\theta)e^{i\theta}\big)  \\
  & = H_1(\theta,\eta)+ H_2(\theta,\eta)+ H_3(\theta,\eta)
\end{align*}
where
\begin{align*}
  H_1(\theta,\eta)&\triangleq\, -\alpha c_\alpha\frac{(R_1(\theta)-R_2(\theta))e^{i\theta}-(R_1(\eta)-R_2(\eta))e^{i\eta}}
  {|R_1(\theta)e^{i\theta}-R_1(\eta)e^{i\eta}|^{\alpha+2}}\cdot \partial_\theta\big(R_1(\theta)e^{i\theta}\big), \\
  H_2 (\theta,\eta)& \triangleq \, -\alpha c_\alpha\frac{R_2(\theta)e^{i\theta}-R_2(\eta)e^{i\eta}}
  {|R_2(\theta)e^{i\theta}-R_2(\eta)e^{i\eta}|^{\alpha+2}}\cdot
  \partial_\theta\big(R_1(\theta)e^{i\theta}-R_2(\theta)e^{i\theta}\big),
\end{align*}
\begin{align*}
  H_3(\theta,\eta)& \triangleq \,\alpha c_\alpha \,\partial_\theta
  \big(R_1(\theta)e^{i\theta}\big)\cdot (R_2(\theta)e^{i\theta}-R_2(\eta)e^{i\eta}) \\
  &\quad \times \frac{|R_1(\theta)e^{i\theta}-R_1(\eta)e^{i\eta}|^{\alpha+2}-|R_2(\theta)e^{i\theta}-R_2(\eta)e^{i\eta}|^{\alpha+2}}
  {|R_1(\theta)e^{i\theta}-R_1(\eta)e^{i\eta}|^{\alpha+2}|R_2(\theta)e^{i\theta}-R_2(\eta)e^{i\eta}|^{\alpha+2}}.
\end{align*}
For $H_1$ and $H_2$, using \eqref{eq:R-lip} and \eqref{eq:R-diff}, we readily infer that
\begin{align}\label{eq:K2122}
  |H_1(\theta,\eta)|+|H_2(\theta,\eta)|\leqslant C
  \frac{\lVert r_1-r_2\rVert_{C^1}}{\big|\sin \frac{\theta-\eta}{2}\big|^{1+\alpha}},\quad \forall \theta\neq \eta\in\TT.
\end{align}
Concerning the last term $H_3$, by virtue of the following inequality 
\begin{align}\label{ineq:k-alpha}
  |A^{k+\alpha}-B^{k+\alpha}|\leqslant C_{k,\alpha}|A-B|(A^{k+\alpha-1}+B^{k+\alpha-1}),\quad A,B\geqslant 0, k\in \mathbb{N},\, 0<\alpha<1,
\end{align}
and using \eqref{eq:R-lip} and \eqref{eq:R-diff}, we find
\begin{align}\label{eq:K23}
  |H_3(\theta,\eta)|\leqslant C\frac{\lVert r_1-r_2\rVert_{C^1}}{\big|\sin \frac{\theta-\eta}{2}\big|^{1+\alpha}},
  \quad \forall \theta\neq \eta\in\TT.
\end{align}
Putting together \eqref{eq:K2122} and \eqref{eq:K23} leads to the wanted estimate of $\partial_\theta H(\theta,\eta)$.
Hence, the kernel $H$ satisfies the required assumptions in Lemma \ref{lem:reg-1}, and consequently,
\begin{align*}
  \|J_1\|_{C^{1-\alpha}} \leqslant C \|r_1 -r_2 \|_{C^1} \|\partial_\eta \big(R_1(\eta)e^{i\eta}\big)\|_{L^\infty}
  \leqslant C \|r_1 -r_2 \|_{C^1} .
\end{align*}
Hence, we conclude that $r\in X_m\mapsto (h\mapsto V_1(r)h)$ as a mapping from $X_m$ to $\mathcal{L}(X_m, Y_m)$ is continuous.\\
Concerning the continuity of $r\mapsto V_2(r)$ introduced in \eqref{Veq-F-2} it can be checked in a similar way to $V_1$ discussed before. Therefore we will skip it. \\
Concerning the continuity of  $r\mapsto V_3(r,h)$ given by \eqref{Veq-F-3}, we start with   the first term
\begin{align*}
  J_4 & \triangleq \int_0^{2\pi}\nabla_x K^\alpha_1\left(R_1(\theta)e^{i\theta},R_1(\eta) e^{i\eta}\right)\cdot
  \partial_\theta\big(R_1(\theta)e^{i\theta}\big)h(\eta)\dd \eta \\
   & \quad -\int_0^{2\pi}\nabla_x K^\alpha_1 \left(R_2(\theta)e^{i\theta},R_2(\eta) e^{i\eta}\right)\cdot
  \partial_\theta\big(R_2(\theta)e^{i\theta}\big)h(\eta)\dd \eta,
\end{align*}
with $R_j(\theta) = \sqrt{b^2 + 2 r_j(\theta)}$, $j=1,2$.
It is easy to see
\begin{align*}
  J_4 & = \int_0^{2\pi}\nabla_x K^\alpha_1\left(R_1(\theta)e^{i\theta},R_1(\eta) e^{i\eta}\right)\cdot
  \Big(\partial_\theta\big(R_1(\theta)e^{i\theta}\big) - \partial_\theta\big(R_2(\theta)e^{i\theta} \big)
  \Big)h(\eta)\dd \eta \\
  & \quad + \int_0^{2\pi} \Big(\nabla_x K^\alpha_1 \big(R_1(\theta)e^{i\theta}, R_1(\eta)e^{i\eta}\big)
  - \nabla_x K^\alpha_1\big(R_2(\theta)e^{i\theta}, R_1(\eta)e^{i\eta}\big) \Big)\cdot
  \partial_\theta\big(R_2(\theta)e^{i\theta}\big)h(\eta)\dd \eta \\
  & \quad + \int_0^{2\pi} \Big(\nabla_x K^\alpha_1 \big(R_2(\theta)e^{i\theta}, R_1(\eta)e^{i\eta}\big)
  - \nabla_x K^\alpha_1\big(R_2(\theta)e^{i\theta}, R_2(\eta)e^{i\eta}\big) \Big)\cdot
  \partial_\theta\big(R_2(\theta)e^{i\theta}\big)h(\eta)\dd \eta \\
  & \triangleq J_{4,1} + J_{4,2} + J_{4,3}.
\end{align*}
For $J_{41}$, we use the following estimate
\begin{align}\label{eq:pa-R1-R2}
   \nonumber\|\partial_\theta\big(R_1(\theta)e^{i\theta}\big) - \partial_\theta\big(R_2(\theta)e^{i\theta}\big) \|_{C^{1-\alpha}}
  & \leq \|R_1-R_2\|_{C^{1-\alpha}} + C \|R_1^{-1}\|_{C^{1-\alpha}} \|r_1' - r_2'\|_{C^{1-\alpha}}
  \\
  &\quad + C \|R_1^{-1} - R_2^{-1}\|_{C^{1-\alpha}} \|r_2'\|_{C^{1-\alpha}} \nonumber \\
  & \leq C \|r_1 - r_2 \|_{C^{2-\alpha}},
\end{align}
and argue as \eqref{eq:K0-Hold} in order to get
\begin{align*}
  \|J_{4,1}\|_{C^{1-\alpha}} \leq C \|h\|_{L^\infty} \|r_1 - r_2\|_{C^{2-\alpha}} .
\end{align*}
For $J_{4,2}$, we apply the mean value theorem and the smoothness of $K_1^\alpha$ inside $\mathbb{D}^2$
\begin{align*}
  J_{4,2} = \int_0^1\int_0^{2\pi} \partial_\theta\big(R_2 e^{i\theta}\big) \cdot
  \Big(\nabla_x^2 K^\alpha_1 \big(\kappa R_1 e^{i\theta} + (1-\kappa)R_2 )e^{i\theta},R_1 e^{i\eta}\big)\Big)\cdot
  \big(R_1e^{i\theta} - R_2 e^{i\theta}\big) h(\eta) \,\dd \eta \dd \kappa,
\end{align*}
allowing to get
\begin{align*}
  \|J_{4,2}\|_{C^{1-\alpha}} &\leqslant C \|h\|_{L^\infty} \|R_1(\theta)e^{i\theta} - R_2(\theta)e^{i\theta}\|_{C^{1-\alpha}}\\
  &\leqslant C  \|h\|_{L^\infty} \|r_1-r_2\|_{C^1}.
\end{align*}
The term $J_{4,3}$ can be analogously treated as above to obtain that
\begin{align*}
  \|J_{4,3}\|_{C^{1-\alpha}} &\leqslant C \|h\|_{L^\infty} \|R_1(\eta)e^{i\eta} - R_2(\eta)e^{i\eta}\|_{L^\infty}
  \\
  &\leqslant C \|h\|_{L^\infty} \|r_1 - r_2\|_{L^\infty}.
\end{align*}
Hence  from the preceding estimates we obtain the continuity for $J_4.$
For the second term in $V_3(r,h)$ given by \eqref{Veq-F-3}, we consider
\begin{align*}
  J_5 \triangleq \int_0^{2\pi} K^\alpha_0(R_1(\theta)e^{i\theta},R_1(\eta)e^{i\eta})L_{A_1}(h)\dd\eta
  - \int_0^{2\pi} K^\alpha_0 (R_2(\theta)e^{i\theta},R_2(\eta)e^{i\eta})L_{A_2}(h)\dd\eta,
\end{align*}
where for $j=1,2$,
\begin{align*}
  L_{A_j}(h) \triangleq \partial_\eta\partial_\theta\Big(\frac{h(\eta)R_j(\theta)\sin(\eta-\theta)}{R_j(\eta)}\Big).
\end{align*}
Notice that
\begin{align*}
  J_5 & = \int_0^{2\pi} H(\theta,\eta)\, L_{A_1}(h) \dd \eta +
  \int_0^{2\pi} K^\alpha_0\big(R_2(\theta)e^{i\theta}, R_2(\eta)e^{i\eta} \big) \big(L_{A_1}(h) - L_{A_2}(h)\big) \dd \eta \\
  & \triangleq J_{5,1} + J_{5,2},
\end{align*}
where $H$ is the kernel function defined by \eqref{eq:kernelH}.
For $J_{5,1}$, it can be estimated following the same lines as  $J_2$, leading to
\begin{align*}
  \|J_{5,1}\|_{C^{1-\alpha}} & \leqslant \Big\|\int_0^{2\pi} H(\theta,\eta)\partial_\eta\left( \frac{h(\eta) e^{i\eta}}{R_1(\eta)}\right)
  \dd \eta\Big\|_{C^{1-\alpha}} \|\partial_\theta\big(R_1(\theta)e^{i\theta}\big)\|_{C^{1-\alpha}} \\
  & \leqslant C \|h\|_{C^1} \|r_1 -r_2\|_{C^1}.
\end{align*}
For the term $J_{5,2}$, by using \eqref{eq:ted-identity}, \eqref{eq:pa-R1-R2} and Lemma \ref{lem:reg-1}, we similarly get
\begin{align*}
   \|J_{5,2}\|_{C^{1-\alpha}}
  & \lesssim \Big\|\int_0^{2\pi} K^\alpha_0\big(R_2(\theta)e^{i\theta} - R_2(\eta)e^{i\eta} \big)
 \partial_\eta\left( \frac{h(\eta) e^{i\eta}}{R_1(\eta)}\right) \dd \eta\Big\|_{C^{1-\alpha}}
  \|\partial_\theta\big(R_1e^{i\theta} - R_2e^{i\theta}\big) \|_{C^{1-\alpha}} \\
  &\quad  + C \Big\|\int_0^{2\pi} K^\alpha_0\big(R_2(\theta)e^{i\theta} - R_2(\eta)e^{i\eta} \big)
  \partial_\eta\left[\left( \frac{h(\eta) e^{i\eta}}{R_1(\eta)}\right)-\left( \frac{h(\eta) e^{i\eta}}{R_2(\eta)}\right) \right] \dd \eta\Big\|_{C^{1-\alpha}} \\
  & \leqslant C \|h\|_{C^1} \|r_1 -r_2\|_{C^{2-\alpha}}.
\end{align*}
Hence, based the the preceding results, we claim  that $V_3(r,h)$ as a mapping from $X_m$ to $\mathcal{L}(X_m, Y_m)$ is continuous.
\\
Now we shall  investigate the continuity estimate of the last term   $ V_4(r,h)$. In view of \eqref{eq:ted-identity} and \eqref{Veq-F-4},
we need to consider
\begin{align*}
  J_6 \triangleq & \, \int_0^{2\pi} \widetilde{H}_1(\theta,\eta) \big(i\partial_\eta(R_1(\eta)e^{i\eta})\big) \dd\eta
  \cdot \Big(\partial_\theta \big(R_1(\theta) e^{i\theta}\big)\Big) \\
  & - \int_0^{2\pi} \widetilde{H}_2(\theta,\eta) \big(i\partial_\eta(R_2(\eta)e^{i\eta})\big) \dd\eta
  \cdot \Big(\partial_\theta \big(R_2(\theta) e^{i\theta}\big)\Big) ,
\end{align*}
with $R_j(\theta) = \sqrt{b^2 + 2 r_j(\theta)}$, $j=1,2$ and
\begin{align*}
  \widetilde{H}_j(\theta,\eta) \triangleq \nabla_x K_0^\alpha\big(R_j(\theta)e^{i\theta} - R_j(\eta) e^{i\eta}\big)\cdot
  \left(e^{i\theta} \frac{h(\theta)}{R_j(\theta)}-e^{i\eta} \frac{h(\eta)}{R_j(\eta)}\right).
\end{align*}
Observe that
\begin{align}\label{eq:J6-dec}
  J_6 & = \int_0^{2\pi} \big(\widetilde{H}_1(\theta,\eta) - \widetilde{H}_2(\theta,\eta) \big)
  \big(i\partial_\eta (R_1(\eta)e^{i\eta}) \big) \dd\eta \cdot \Big( \partial_\theta \big(R_1(\theta)e^{i\theta}\big) \Big) \nonumber \\
  & \quad + \int_0^{2\pi} \widetilde{H}_2(\theta,\eta) \big(i\partial_\eta (R_1(\eta)e^{i\eta})
  - i \partial_\eta (R_2(\eta) e^{i\eta}) \big) \dd\eta \cdot \Big( \partial_\theta \big(R_1(\theta)e^{i\theta}\big) \Big) \nonumber \\
  & \quad + \int_0^{2\pi}  \widetilde{H}_2(\theta,\eta)
  \big(i\partial_\eta (R_1(\eta)e^{i\eta}) \big) \dd\eta \cdot \Big( \partial_\theta \big(R_1(\theta)e^{i\theta}\big)
  - \partial_\theta \big( R_2(\theta)e^{i\theta}\big) \Big) \nonumber \\
  & \triangleq \, J_{6,1} + J_{6,2} + J_{6,3} .
\end{align}
Concerning the terms  $J_{6,2}$ and $J_{6,3}$,  we may proceed in a similar way to  $J_{5,1}$ and $J_{5,2}$, and one gets
\begin{align*}
  \|J_{6,2}\|_{C^{1-\alpha}} + \|J_{6,3}\|_{C^{1-\alpha}} \leqslant C \|h\|_{C^1} \|r_1 - r_2\|_{C^{2-\alpha}}.
\end{align*}
For $J_{6,1}$, we first notice by the product law
\begin{align*}
  \|J_{6,1}\|_{C^{1-\alpha}} \leqslant C \Big\|\int_0^{2\pi}
  \big(\widetilde{H}_1(\theta,\eta) -\widetilde{H}_2(\theta,\eta) \big)
  \big(i \partial_\eta (R_1(\eta)e^{i\eta})\big) \dd \eta \Big\|_{C^{1-\alpha}},
\end{align*}
and we shall use Lemma \ref{lem:reg-1} in order to tackle  the above right-hand term.
Direct computations yield  the following decomposition
\begin{align*}
  &\widetilde{H}_1(\theta,\eta) - \widetilde{H}_2(\theta,\eta) \\
  &= \left(\nabla_x K^\alpha_0 \big(R_1(\theta)e^{i\theta},R_1(\eta) e^{i\eta}\big)
  - \nabla_x K^\alpha_0 \big(R_2(\theta)e^{i\theta}, R_2(\eta) e^{i\eta}\big)\right)
  \cdot \left(e^{i\theta} \frac{h(\theta)}{R_1(\theta)}- e^{i\eta} \frac{h(\eta)}{R_1(\eta)}\right)\\
  &\quad + \nabla_x K^\alpha_0 \big(R_2(\theta)e^{i\theta},R_2(\eta) e^{i\eta}\big)
  \cdot \left(e^{i\theta} \frac{h(\theta)}{R_1(\theta)}- e^{i\eta} \frac{h(\eta)}{R_1(\eta)}
  -e^{i\theta} \frac{h(\theta)}{R_2(\theta)}+ e^{i\eta} \frac{h(\eta)}{R_2(\eta)}\right) \\
  & \triangleq S_1(\theta,\eta) + S_2(\theta,\eta).
\end{align*}
Noting that
\begin{align*}
  \nabla_x K_0^\alpha(x_1-y_1)-\nabla_x & K_0^\alpha(x_2-y_2)=-\alpha c_\alpha \frac{x_1-y_1}{|x_1-y_1|^{\alpha+2}}+\alpha c_\alpha\frac{x_2-y_2}{|x_2-y_2|^{\alpha+2}}\\
  =&\alpha c_\alpha
  (x_1-y_1)\left(\frac{|x_1-y_1|^{\alpha+2}-|x_2-y_2|^{\alpha+2}}{|x_1-y_1|^{\alpha+2}|x_2-y_2|^{\alpha+2}}\right)
  +\alpha c_\alpha\frac{(x_2-y_2)-(x_1-y_1)}{|x_2-y_2|^{\alpha+2}},
\end{align*}
and using \eqref{eq:R-lip}, \eqref{eq:h-R-diff}, \eqref{eq:R-diff} and \eqref{ineq:k-alpha},
we have that
\begin{align*}
  |S_1(\theta,\eta)| \leqslant C \frac{\lVert h\rVert_{C^1}\lVert r_1-r_2\rVert_{C^1}}{\big|\sin \frac{\theta-\eta}{2}\big|^{\alpha}},
  \quad \forall\theta\neq\eta\in \TT.
\end{align*}
Using mean value theorem
and the $2\pi$-periodicity of $R_j$ and $h$, we find
\begin{align}\label{eq:R-h-r-diff}
  \left| \frac{e^{i\theta}h(\theta)}{R_1(\theta)}-  \frac{e^{i\eta}h(\eta)}{R_1(\eta)}- \frac{e^{i\theta}h(\theta)}{R_2(\theta)}
  + \frac{e^{i\eta}h(\eta)}{R_2(\eta)}\right|\leqslant  C\lVert h\rVert_{C^1}\lVert r_1-r_2\rVert_{C^1}\left|\sin \tfrac{\theta-\eta}{2}\right|.
\end{align}
According to \eqref{eq:K0-d2} and \eqref{eq:R-h-r-diff}, we obtain
\begin{align*}
  |S_2 (\theta,\eta)|\leqslant C \frac{\lVert r_1-r_2\rVert_{C^1}\lVert h\rVert_{C^1}}
  {\big|\sin \frac{\theta-\eta}{2}\big|^\alpha},
  \quad \forall\theta\neq\eta\in \TT.
\end{align*}
Consequently,
\begin{align}\label{ineq:J-4}
  |\widetilde{H}_1(\theta,\eta) -\widetilde{H}_2(\theta,\eta) |\leqslant C \frac{\lVert r_1-r_2\rVert_{C^1}\lVert h\rVert_{C^1}}{\big|\sin \frac{\theta-\eta}{2}\big|^{\alpha}},
  \quad \forall\theta\neq\eta\in \TT.
\end{align}
Now we intend to estimate $\partial_{\theta}S_1 (\theta,\eta)$ and $\partial_\theta S_2 (r,\theta)$.
We see that
\begin{align*}
  \partial_\theta S_1(\theta,\eta) & =  \bigg(\nabla_x^2 K^\alpha_0 \big(R_1(\theta)e^{i\theta} - R_1(\eta) e^{i\eta}\big)
  \cdot \partial_\theta \big(R_1(\theta) e^{i\theta} \big) \\
  & \quad\quad - \nabla_x^2 K^\alpha_0 \big(R_2(\theta)e^{i\theta} - R_2(\eta) e^{i\eta}\big) \cdot \partial_\theta
  \big(R_2(\theta) e^{i\theta} \big)\bigg)
  \cdot \left(e^{i\theta} \frac{h(\theta)}{R_1(\theta)}- e^{i\eta} \frac{h(\eta)}{R_1(\eta)}\right) \\
  & \quad + \left(\nabla_x K^\alpha_0 \big(R_1(\theta)e^{i\theta}-R_1(\eta) e^{i\eta}\big)
  - \nabla_x K^\alpha_0 \big(R_2(\theta)e^{i\theta}- R_2(\eta) e^{i\eta}\big)\right)
  \cdot \partial_\theta\left(e^{i\theta} \frac{h(\theta)}{R_1(\theta)} \right).
\end{align*}
Note that for every $x=(x^1,x^2)$ and $y=(y^1,y^2)$ satisfying $x\neq y$,
\begin{align*}
  \nabla^2_xK_0^{\alpha}(x-y)= \frac{-\alpha c_\alpha}{|x-y|^{\alpha+2}}
  \begin{pmatrix}
	1 & 0\\
	0 & 1
  \end{pmatrix}
  +\frac{\alpha(\alpha+2)c_\alpha}{|x-y|^{\alpha+4}}
  \begin{pmatrix}
	(x^1-y^1)^2 & (x^1-y^1)(x^2-y^2)\\
	(x^1-y^1)(x^2-y^2) & (x^2-y^2)^2
  \end{pmatrix},
\end{align*}
and
\begin{align*}
  |\nabla^3_x K^\alpha_0(x-y)| \leqslant \frac{C }{|x-y|^{\alpha +3}},
\end{align*}
then using \eqref{eq:R-lip} (and its suitable variation), we know that
\begin{equation}\label{eq:K-2-R-bound}
  |\nabla_x^2 K_0^{\alpha}(R(\theta)e^{i\theta} - R(\eta)e^{i\eta})| \leqslant C \big|\sin \tfrac{\theta-\eta}{2}\big|^{-(\alpha+2)},
\end{equation}
and for every $\kappa \in [0,1]$,
\begin{align*}
  \Big|\nabla^3_x K^\alpha_0\Big(\kappa \big( R_1(\theta) e^{i\theta} - R_1(\eta)e^{i\eta}\big)
  + (1-\kappa) \big(R_2(\theta)e^{i\theta} - R_2(\eta)e^{i\eta}\big) \Big)\Big|
  \leqslant C \big|\sin \tfrac{\theta -\eta}{2} \big|^{-(\alpha +3)}.
\end{align*}
Thanks to the above estimates, together with \eqref{eq:R-lip}, \eqref{eq:h-R-diff} and \eqref{eq:R-diff}, we conclude that
\begin{align*}
  \big| \nabla^2_x K_0^\alpha\big(R_1(\theta)e^{i\theta} - R_1(\eta)e^{i\eta}\big)
  - \nabla^2_x K^\alpha_0\big(R_2(\theta)e^{i\theta} - R_2(\eta)e^{i\eta}\big) \big|
  \leqslant C \frac{\lVert r_1-r_2\rVert_{C^1}}{\big|\sin \frac{\theta-\eta}{2}\big|^{\alpha+2}},
\end{align*}
and
\begin{align*}
  |\partial_\theta S_1(\theta,\eta)| \leqslant C
  \frac{\lVert r_1-r_2\rVert_{C^1}\lVert h\rVert_{C^1}}{\big|\sin \frac{\theta-\eta}{2}\big|^{1+\alpha}},
  \quad \forall \theta \neq \eta\in \TT.
\end{align*}
On the other hand, by  direct computations we get
\begin{align*}
  \partial_\theta S_2(\theta,\eta)
  &= \partial_\theta\big(R_2(\theta)e^{i\theta} \big) \cdot
  \nabla_x^2 K^\alpha_0 \big(R_2(\theta)e^{i\theta}-R_2(\eta) e^{i\eta}\big)
  \cdot \left( \tfrac{e^{i\theta}h(\theta)}{R_1(\theta)}- \tfrac{e^{i\eta}h(\eta)}{R_1(\eta)}
  - \tfrac{e^{i\theta}h(\theta)}{R_2(\theta)} + \tfrac{ e^{i\eta}h(\eta)}{R_2(\eta)}\right) \\
  & \quad + \nabla_x K^\alpha_0 \big(R_2(\theta)e^{i\theta}-R_2(\eta) e^{i\eta}\big)
  \cdot \partial_\theta\left( \tfrac{e^{i\theta}h(\theta)}{R_1(\theta)}
  - \tfrac{e^{i\theta}h(\theta)}{R_2(\theta)} \right),
\end{align*}
and
\begin{align}\label{ineq:R-h-d}
  \left|\partial_\theta\left( \tfrac{e^{i\theta}h(\theta)}{R_1(\theta)}- \tfrac{e^{i\theta}h(\theta)}{R_2(\theta)}\right)\right|
  \leqslant C \lVert h\rVert_{C^1}\lVert r_1-r_2\rVert_{C^1}.
\end{align}
Together  with \eqref{eq:R-h-r-diff} and \eqref{eq:K-2-R-bound}, we infer
\begin{align*}
  |\partial_{\theta} S_2(\theta,\eta)|\leqslant C_{\alpha}\frac{\lVert r_1-r_2\rVert_{C^1}\lVert h\rVert_{C^1}}{\big|\sin \frac{\theta-\eta}{2}\big|^{1+\alpha}}.
\end{align*}
It follows that
\begin{align}\label{ineq:J-4-d}
  |\partial_\theta\widetilde{H}_1(\theta,\eta) - \partial_\theta \widetilde{H}_2(\theta,\eta) |
  \leqslant    C_{\alpha}\frac{\lVert r_1-r_2\rVert_{C^1}\lVert h\rVert_{C^1}}{\big|\sin \frac{\theta-\eta}{2}\big|^{1+\alpha}}.
\end{align}
At this stage we can use \eqref{ineq:J-4}, \eqref{ineq:J-4-d} and Lemma \ref{lem:reg-1} to show that
\begin{align*}
  \lVert J_{6,1}\rVert_{C^{1-\alpha}}\leqslant C\lVert h\rVert_{C^1}\lVert r_1-r_2\rVert_{C^1}.
\end{align*}
Hence from \eqref{eq:J6-dec} and the above estimates, we have
\begin{align*}
  \|J_6\|_{C^{1-\alpha}} \leqslant C \|h\|_{C^1} \|r_1 -r_2\|_{C^{2-\alpha}},
\end{align*}
which ensures  that $r\mapsto (h\mapsto V_4(r,h))$ is a continuous mapping from $X_m$ to $\mathcal{L}(X_m, Y_m)$.
\\
In conclusion, we have established that  $r\mapsto \partial_r F(\Omega,r)$
is a continuous mapping from the small ball $B_{\epsilon_0}$ of $X_m$ to $\mathcal{L}(X_m,Y_m)$.
\\
{\bf{(iii)}} We shall compute $\partial_{\Omega}\partial_{r}F(\Omega,r)$ and prove the continuity of this function.
Let $r(\theta)\in B_{\epsilon_0}$ and $h\in X_m$, then in view of \eqref{F-Lin-r} one has
\begin{align*}
  \partial_{\Omega}\partial_rF(\Omega,r)h(\theta)= h'(\theta),
\end{align*}
which is independent of $r$ and $\Omega$. Hence, the continuity of $\partial_{\Omega}\partial_{r}F(\Omega,r)$ is obvious.
This concludes the proof of the Proposition \ref{prop:regularity-F1}.
	
\end{proof}

\section{Spectral study}\label{sec:spect-Stu}
In this section we focus on the spectral study of the linearized operator of $F(\Omega,r)$ around zero, which is denoted by $\partial_r F(\Omega,0)$.
It turns out that only some discrete  values of $\Omega$ are allowed to generate a non-trivial kernel. In addition, we will see that all the spectral properties required by
Theorem \ref{thm:C-R} are satisfied at least  for large symmetry $m$. The main result of this section reads as follows.

\begin{proposition}\label{prop:other-condition}
Let $(\alpha,b,m)$  satisfy one of the cases \eqref{case1}-\eqref{case2}-\eqref{case3}. Then the  following statements hold true.
\begin{enumerate}
\item
The kernel of $\partial_{r}F(\Omega,0)$ in $X_m$ is non-trivial if and only if
$\Omega=\Omega^{\alpha}_{\ell m,b},$ for some $\ell\in \mathbb{N}^{+}$ with
\begin{equation}\label{eq:Omg-al-m}
\begin{split}
  \Omega_{m,b}^\alpha & = -V_1(0)- \alpha_{m,b}  \\
  & \triangleq  2\sum_{ k\geqslant 1}x_{0,k}^{\alpha-2}\frac{J_1^2\big(x_{0,k} b \big)}{J_1^2(x_{0,k} )}
  -  2\sum_{k\geqslant 1}x_{m,k}^{\alpha-2}\frac{J_m^2(x_{m,k}b)}{J_{m+1}^2(x_{m,k})},
\end{split}
\end{equation}
and in this case, it is a one-dimensional vector space in $X_m$ generated by  $\theta\mapsto \cos (\ell m\theta)$.
\item The range of $\partial_{r}F(\Omega^\alpha_{\ell m,b},0)$ is closed in $Y_m$ and is of co-dimension one. It is given by
\begin{align*}
  R\left(\partial_rF(\Omega_{\ell m,b}^{\alpha},0)\right)=\bigg\{
  r\in C^{1-\alpha}(\mathbb{T}): r(\theta)=\sum_{n\geqslant 1\atop n\ne \ell}a_n \sin (nm\theta),\, a_n \in \mathbb{R}
  \bigg\}.
\end{align*}
\item Transversality assumption:
\begin{align*}
  \partial_{\Omega}\partial_rF(\Omega^\alpha_{\ell m,b},0)\big(\cos (\ell m\theta)\big)
  \notin R(\partial_{r}F(\Omega^{\alpha}_{\ell m,b},0)).
\end{align*}
\end{enumerate}
\end{proposition}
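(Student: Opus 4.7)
The plan is to exploit the radial symmetry at $r=0$ together with the explicit Bessel expansion of the Green function from Lemma \ref{lem:disc-exp} in order to diagonalize $\partial_r F(\Omega,0)$ in the Fourier cosine basis $\{\cos(nm\theta)\}_{n\geqslant 1}$. Starting from the linearization formula \eqref{F-Lin-r}--\eqref{eq:LA}, I would set $r=0$ so that $R(\theta)\equiv b$; then by the rotation invariance of $K^\alpha$ given in Lemma \ref{lem:HD-sym}(ii), the coefficients $V_1(0),V_2(0)$ become constants, and the integral operators $V_3(0,h)$ and $V_4(0,h)$ reduce to convolution-type operators on $\TT$ whose kernels depend only on $\theta-\eta$. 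Substituting $h(\theta)=\cos(nm\theta)$ and using the Fourier expansion \eqref{eq:G-exp} for $G(b,\theta,\rho,\eta)$ together with the identities \eqref{eq:K-der} and \eqref{eq:K-itheta}, every $\eta$-integral collapses to one term per Bessel index, and the radial $\rho$-integrals $\int_0^b J_m(x_{m,k}\rho)\rho\,\dd\rho$ can be evaluated by standard Bessel identities. The outcome I expect is the clean diagonal action
\begin{equation*}
  \partial_r F(\Omega,0)(\cos(nm\theta)) \;=\; nm\,\bigl(\Omega-\Omega^\alpha_{nm,b}\bigr)\,\sin(nm\theta),
\end{equation*}
with $\Omega^\alpha_{nm,b}$ exactly of the form \eqref{eq:Omg-al-m}.

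Once this diagonal form is in hand, the kernel in $X_m$ is one-dimensional and generated by $\cos(\ell m\theta)$ precisely when $\Omega=\Omega^\alpha_{\ell m,b}$ and no other mode $nm$ produces the same frequency. The key step is therefore to establish the strict monotonicity of the sequence $\{\Omega^\alpha_{nm,b}\}_{n\geqslant 1}$ in each of the regimes \eqref{case1}--\eqref{case3}. For this I would apply Sneddon's formula (Lemma \ref{lem:sned-form}) with $\beta=\gamma=nm$, $q=2-\alpha$, $a=b$, to convert the discrete Bessel sum in \eqref{eq:Omg-al-m} into an integral representation involving $I_{nm}$, $K_{nm}$ and a hypergeometric evaluation of the $\mathbb{J}$ term, and then analyze the dependence on $n$. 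I expect this monotonicity to be the \emph{main obstacle}: the hypergeometric piece and the Bessel integral pull in opposite directions, and their balance forces the three separate parameter regimes listed in the theorem (small $b$, large $m$, or small $\alpha$). The explicit bounds $m^*$ and $\alpha^*$ appearing there signal that the analysis requires careful uniform asymptotics of $I_{nm}/K_{nm}$ as $n\to\infty$ together with monotonicity of ratios of Gamma functions.

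For statement (ii), the diagonalization reduces the problem to an algebraic one: $\partial_r F(\Omega^\alpha_{\ell m,b},0)$ maps $\cos(nm\theta)$ to a nonzero multiple of $\sin(nm\theta)$ for every $n\neq \ell$ and annihilates $\cos(\ell m\theta)$, so its range is characterized by the vanishing of the $\ell$-th Fourier coefficient. Closedness and codimension one follow by writing the linearized operator as $\mathcal{L}_0+\mathcal{K}$, where $\mathcal{L}_0$ corresponds to the leading-order multiplier $nm(\Omega-\Omega^\alpha_{nm,b})$ (invertible on the complement of $\sin(\ell m\theta)$ thanks to the asymptotics $\Omega^\alpha_{nm,b}\to \Omega_\infty$ as $n\to\infty$), while $\mathcal{K}$ is compact from $X_m$ to $Y_m$ by the smoothing properties of $K_1^\alpha$ and the $C^{1-\alpha}$-to-$C^{2-\alpha}$ gain already used in Proposition \ref{prop:regularity-F1}. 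This yields a Fredholm operator of index zero, and the explicit diagonal form fixes the codimension.

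Finally, statement (iii) is immediate once the diagonalization of part (i) is established: since $\partial_\Omega F(\Omega,r)=r'$ is independent of $\Omega$, one has
\begin{equation*}
  \partial_\Omega\partial_r F(\Omega^\alpha_{\ell m,b},0)(\cos(\ell m\theta)) \;=\; -\ell m\,\sin(\ell m\theta),
\end{equation*}
and this is precisely the Fourier mode that spans the one-dimensional complement of the range described in (ii). Hence the transversality condition of Crandall--Rabinowitz holds automatically, and no further computation is required beyond the spectral identification already performed.
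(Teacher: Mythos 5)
Your overall strategy for part (i) — set $r=0$ so $R\equiv b$, use the rotation invariance of Lemma \ref{lem:HD-sym} to see that $V_1(0),V_2(0)$ are constants and that $V_3(0,\cdot),V_4(0,\cdot)$ are $\theta$-translation-invariant, substitute $h=\cos(nm\theta)$, and evaluate the $\rho$- and $\eta$-integrals via the Bessel expansion \eqref{eq:G-exp} — is exactly what the paper does, and your expected diagonal form agrees with the paper's \eqref{eq:F-h-general} up to an overall sign (the paper obtains $-nm(\Omega-\Omega^\alpha_{nm,b})\sin(nm\theta)$, and indeed you recover that sign in part (iii), so this is just a slip). Likewise, part (iii) is immediate from $\partial_\Omega\partial_r F(\Omega,0)h = h'$, as you say, and matches the paper.

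Two points of genuine difference, though. First, the strict monotonicity of $m\mapsto\Omega^\alpha_{m,b}$ is not an open ``main obstacle'' inside the proof of Proposition~\ref{prop:other-condition}: the paper establishes it in full as Lemma~\ref{lem:Omg-m}(i), in a separate preparatory subsection, precisely via the Sneddon representations \eqref{V-1-transfer}--\eqref{eq:alp-m-exp} you mention; the proof of the Proposition simply cites that lemma. You have the right tool but should recognize that the verification in the three regimes \eqref{case1}--\eqref{case3} is a self-contained auxiliary result rather than something to be re-derived here.

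Second, for part (ii) you invoke a Fredholm splitting $\mathcal{L}_0+\mathcal{K}$ but describe it inconsistently: if $\mathcal{L}_0$ ``corresponds to the multiplier $nm(\Omega-\Omega^\alpha_{nm,b})$'' then $\mathcal{L}_0$ is already the \emph{entire} diagonalized operator and there is no room for a nonzero compact $\mathcal{K}$; and the compactness you attribute to ``smoothing properties of $K_1^\alpha$'' is not quite the right source — the crucial smallness comes from the decay $\alpha_{nm,b}\to 0$ as $n\to\infty$ (a fact about the whole kernel, including the singular part $K_0^\alpha$, via \eqref{eq:alpha_m^1_bound}), not from $K_1^\alpha$ alone. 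A correct version of this idea would take $\mathcal{L}_0=(\Omega+V_1(0))\partial_\theta$, note that $\Omega+V_1(0)=-\alpha_{\ell m,b}\neq 0$, and show the remaining nonlocal part is relatively compact using the asymptotics of $\alpha_{nm,b}$. The paper instead avoids Fredholm machinery altogether: having the exact diagonal multiplier, it writes down the formal pre-image $a_n = b_n/\bigl(nm(\Omega^\alpha_{\ell m,b}-\Omega^\alpha_{nm,b})\bigr)$ and verifies directly that the resulting cosine series belongs to $C^{2-\alpha}(\TT)$, using the asymptotic formula \eqref{eq:alpha_m^1_bound} and referring to the analogous argument in \cite{HH15}. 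Your perturbative route can be made to work, but as written it conflates the exact diagonalized operator with a leading-order model and thus does not, by itself, close the argument.
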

As one can easily observe, the proof of Theorem \ref{main-theorem} is a direct consequence of Theorem \ref{thm:C-R}, Proposition \ref{prop:other-condition} and Proposition \ref{prop:regularity-F1}.\\

The proof of Proposition \ref{prop:other-condition} will be done  in several steps through the subsections below.
\subsection{Analysis of the linear frequencies}
Before proceeding forward, we collect some properties on the asymptotic behavior of the sequence $\{\Omega_{m,b}^\alpha\}_{m\geq 1}$
with respect to $m$ and $\alpha$.
\begin{lemma}\label{lem:Omg-m}
We have the following results.
\begin{enumerate}
\item  Let $(\alpha,b,m)$  satisfy one of cases \eqref{case1}-\eqref{case2}-\eqref{case3}
with $m^*=m^*(\alpha,b) \in \NN^+$ $\big($a rough bound is $m^* \leq \frac{1}{\log b} \big(\log \frac{1-\alpha}{1-\frac{\alpha}{2}- (e\log b)^{-1}}\big)\big)$ and $\alpha^*=\alpha^*(b)>0$ a small number.
Then the map $m\mapsto \Omega_{m,b}^\alpha$ is strictly increasing. In addition, for any $m\geq 1$ and $b\in (0,1)$,
\begin{align*}
  \lim_{\alpha\rightarrow 0} \Omega_{m,b}^\alpha = \frac{m-1+ b^{2m}}{2m} ,
\end{align*}
and
\begin{align}\label{eq:Omeg-alp->1}
  \lim_{\alpha\rightarrow 1} \Omega_{m,b}^\alpha = \frac{2}{\pi b}\sum_{k=1}^{m-1}\frac{1}{2k+1}+\frac{2}{\pi}
  \int_{0}^{\infty}\left(\frac{I^2_1(b\rho)K_0(\rho)}{I_0(\rho)}+\frac{I^2_m(b\rho)K_m(\rho)}{I_m(\rho)}\right)\dd\rho.
\end{align}
\item For $\alpha,b\in (0,1)$ fixed and $m\in\NN$ large enough,
\begin{align}\label{eq:alpha_m^1_bound}
  \alpha_{m,b} = \frac{2^{\alpha-1}\Gamma(1-\alpha)}{b^{\alpha}\Gamma^2(1-\frac{\alpha}{2})}m^{\alpha-1}+O\left(\tfrac{1}{m^{3-\alpha}}\right).
 \end{align}
 \end{enumerate}

\end{lemma}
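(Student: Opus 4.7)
My plan is to derive an integral representation of $\Omega_{m,b}^\alpha$ via Sneddon's formula \eqref{eq:sned-form} and read off all the claims from it. Applying Lemma \ref{lem:sned-form} with $n=\beta=\gamma=m$, $a=b$, $q=2-\alpha$ (so that $\sin\frac{\pi}{2}(\beta+\gamma-2n-q) = -\sin\frac{\pi\alpha}{2}$) and using the specialisation \eqref{def:J2}, I expect to obtain
\begin{equation*}
\alpha_{m,b} = \frac{2^{\alpha-1}\Gamma(1-\alpha)}{b^\alpha\Gamma^2(1-\alpha/2)}\,\frac{\Gamma(m+\alpha/2)}{\Gamma(m+1-\alpha/2)} - \frac{2\sin(\pi\alpha/2)}{\pi}\int_0^\infty \rho^{\alpha-1}\,I_m^2(b\rho)\,\frac{K_m(\rho)}{I_m(\rho)}\,\dd\rho,
\end{equation*}
together with an analogous, $m$-independent representation of the first series $-V_1(0)$ (obtained from Sneddon with $n=0$, $\beta=\gamma=1$, $q=2-\alpha$).

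From this representation part (ii) should be essentially immediate. Stirling's formula for $\Gamma$-ratios yields $\Gamma(m+\alpha/2)/\Gamma(m+1-\alpha/2) = m^{\alpha-1}(1+O(m^{-2}))$; note the absence of the $O(m^{-1})$ term reflects the identity $\tfrac{\alpha}{2} + (1-\tfrac{\alpha}{2}) - 1 = 0$ in the leading Stirling coefficient. The integral can be controlled via the pointwise bound $I_m(b\rho)\leqslant b^m I_m(\rho)$, which is immediate from the series \eqref{I-m-q}, together with the closed-form evaluation \eqref{eq:J1-2} of $\int_0^\infty \rho^{\alpha-1} I_m(\rho)K_m(\rho)\,\dd\rho$. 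This produces an $O(b^{2m})$ contribution that is absorbed into the $O(m^{\alpha-3})$ remainder.

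The strict monotonicity in $m$ is where the three regimes enter. The $\Gamma$-ratio $\Gamma(m+\alpha/2)/\Gamma(m+1-\alpha/2)$ is strictly decreasing in $m$ whenever $\alpha<1$, so the explicit part of $\alpha_{m,b}$ decreases with $m$; since the subtracted integral is positive, it acts against monotonicity and must be shown subdominant. In case \eqref{case1}, the bound $I_m(b\rho)\leqslant b^m I_m(\rho)$ makes the integral of size $b^{2m}$, and the explicit constraint $b^2\leqslant (1-\alpha)/(2-\alpha/2)$ is precisely what is needed so that the gap in the explicit $\Gamma$-ratio dominates the $b^{2m}$-sized correction uniformly, starting already from $m=1$. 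Case \eqref{case2} is then a corollary of the asymptotic expansion of (ii): once $m\geqslant m^*$, the leading $m^{\alpha-1}$ behaviour is monotone past the $O(m^{\alpha-3})$ error. Case \eqref{case3} exploits the fact that the sine prefactor $\sin(\pi\alpha/2)/\pi$ is of order $\alpha$, so for $\alpha<\alpha^*$ the integral contribution is uniformly dominated by the explicit monotonic part, for any $m$ and any $b\in(0,1)$.

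The limits $\alpha\to 0,1$ are then obtained by passing to the limit in the Sneddon representation of $\Omega_{m,b}^\alpha$. For $\alpha\to 1$ the sine prefactor tends to $1/\pi$ and the explicit pieces, after combining the first and second series and simplifying the $\Gamma$-ratios at half-integer arguments, should collapse into the finite telescoping sum $\frac{2}{\pi b}\sum_{k=1}^{m-1}\frac{1}{2k+1}$. For $\alpha\to 0$, the sine factor suppresses the integral as $O(\alpha)$, while the explicit $\Gamma$-ratios must be expanded to first order in $\alpha$ to capture the $b^{2m}$ term appearing in the Euler limit $(m-1+b^{2m})/(2m)$; I anticipate this involves a careful balance between the $\Gamma(1-\alpha)$ singularity in the prefactor and the $O(\alpha)$ leading contribution of the integral, ultimately recovering the formula of \cite{DHHM}. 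The main technical obstacle I foresee is the sharp threshold in case \eqref{case1}: obtaining the precise bound $b^2\leqslant (1-\alpha)/(2-\alpha/2)$ at $m=1$ requires a finer comparison of the integral and the $\Gamma$-ratio decrement than the crude $b^{2m}$ bound alone, and not merely an asymptotic one.
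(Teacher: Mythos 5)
Your plan correctly identifies the central tool — Sneddon's formula produces exactly the representation of $\alpha_{m,b}$ and $-V_1(0)$ stated in \eqref{V-1-transfer}--\eqref{eq:alp-m-exp}, and this is indeed how the paper proceeds. Part (ii) via the Wallis-quotient asymptotics, case (1) via an explicit comparison at $m=1$, and case (2) for large $m$ are all recovered along the lines you sketch (though the explicit numeric bound on $m^*$ requires the ad-hoc inequality $mb^{2m}\leqslant -(e\log b)^{-1}b^m$ rather than the asymptotics of (ii)).

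However, there is a genuine gap in your treatment of the $\alpha\to 0$ limit, and the same missing device undermines case (3). You claim that ``the sine factor suppresses the integral as $O(\alpha)$'' and that the $b^{2m}$ in $(m-1+b^{2m})/(2m)$ must therefore come from expanding the $\Gamma$-ratios, possibly balanced against a ``$\Gamma(1-\alpha)$ singularity''. Both of these are wrong. First, $\Gamma(1-\alpha)$ has no singularity at $\alpha=0$: $\Gamma(1)=1$; the pole of $\Gamma(1-\alpha)$ is at $\alpha=1$, which is instead where \eqref{eq:Omeg-alp->1} requires the delicate cancellation you mention. Second, as $\rho\to 0$ one has $I_m^2(b\rho)K_m(\rho)/I_m(\rho)\to b^{2m}/(2m)$ (a nonzero constant, not $O(\rho^m)$), so $\int_0^\infty \rho^{\alpha-1}\,I_m^2(b\rho)K_m(\rho)/I_m(\rho)\,\dd\rho$ diverges like $b^{2m}/(2m\alpha)$ as $\alpha\to 0$. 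Multiplying by $\sin(\pi\alpha/2)=O(\alpha)$ produces a \emph{nonzero} limit $b^{2m}/(2m)$, and this $0\times\infty$ balance inside the integral is precisely the origin of the $b^{2m}$ term; it has nothing to do with the $\Gamma$-ratio prefactor. The paper disentangles this by the refined pointwise identity $I_m(b\rho)=(b^m - r_{m,b}(\rho))I_m(\rho)$ with $0\leqslant r_{m,b}\leqslant b^m\min\{1,\rho^2/(4m)\}$ (cf.\ \eqref{eq:Im_b-2}), which splits $\alpha_{m,b}^{(2)}$ into a term $\alpha_{m,b}^{(21)}$ that can be evaluated in closed form via \eqref{eq:J1-2} and a remainder $\alpha_{m,b}^{(22)}$ that genuinely is $O(\alpha)$ (cf.\ \eqref{Estalpha23}). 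Your plan as stated would miss the $b^{2m}$ term entirely. The same decomposition is indispensable for case (3): the proof there is two-tiered — a large-$m$ estimate, followed by a finite-range compactness argument that writes $\alpha_{m,b}=\mathtt{V}_m(0)+\varrho_m(\alpha)$ with $\mathtt{V}_m(0)=(1-b^{2m})/(2m)$, invokes the strict decrease of this sequence from \cite[Prop.\ 15]{DHHM}, and controls $\varrho_m(\alpha)=O(\alpha)$ only on $m\in[1,\overline m]$ — rather than the uniform domination you describe, which cannot hold uniformly in $m$ since the decrement of the explicit $\Gamma$-ratio vanishes as $m\to\infty$.
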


\begin{proof}[Proof of Lemma \ref{lem:Omg-m}]

\textbf{(i)}
By using $\eqref{eq:Omg-al-m}$ combined with Sneddon's formula \eqref{eq:sned-form} and \eqref{def:J2}, we have
\begin{align}\label{V-1-transfer}
   -V_1(0) = \frac{\Gamma(1-\alpha)\Gamma(1+\frac{\alpha}{2})}{b^\alpha 2^{1-\alpha}\Gamma^2(1-\frac{\alpha}{2})
  \Gamma(2-\frac{\alpha}{2})} + \frac{2}{\pi}\sin (\tfrac{\alpha\pi}{2})
  \int_0^{\infty}\rho^{\alpha-1}\frac{I^2_1(b\rho)K_{0}(\rho)}{I_0(\rho)}\dd\rho,
\end{align}
and
\begin{equation}\label{eq:alp-m-exp}
\begin{split}
  \alpha_{m,b} & = \frac{2}{\pi} \sin\big(\tfrac{\alpha\pi}{2}\big)
  \int_0^\infty \rho^{\alpha-1} I_m(b\rho) K_m(b\rho) \dd \rho
  -\frac{2}{\pi}\sin (\tfrac{\alpha\pi}{2})\int_0^{\infty}\rho^{\alpha-1}\frac{I^2_m(b\rho)K_{m}(\rho)}{I_m(\rho)}\dd\rho \\
  & = \frac{\Gamma(1-\alpha)\Gamma(m+\frac{\alpha}{2})}{b^\alpha 2^{1-\alpha}\Gamma^2(1-\frac{\alpha}{2})
  \Gamma(m+1-\frac{\alpha}{2})}-\frac{2}{\pi}\sin (\tfrac{\alpha\pi}{2})\int_0^{\infty}\rho^{\alpha-1}\frac{I^2_m(b\rho)K_{m}(\rho)}{I_m(\rho)}\dd\rho \\
  & \triangleq \alpha_{m,b}^{(1)} - \alpha_{m,b}^{(2)},
\end{split}
\end{equation}
where $I_m$ and $K_m$ are the modified Bessel functions introduced in Section \ref{Sect-Bess}.\\
In order to show that $\Omega_{m,b}^\alpha$ is strictly increasing in $m$,
we shall analyze the monotonicity of the sequence $\{\alpha_{m,b}\}_{m\geq 1}$.
Consider the Wallis quotient defined by
\begin{align*}
  \mathtt{W}_\alpha(m) \triangleq \frac{\Gamma(m+\frac{\alpha}{2})}{\Gamma(m+1-\frac{\alpha}{2})},
\end{align*}
then we easily see that
\begin{align}\label{eq:alp-m1}
  \alpha_{m,b}^{(1)} = \frac{2^{\alpha-1}\Gamma(1-\alpha)}{b^{\alpha}\Gamma^2(1-\frac{\alpha}{2})}\mathtt{W}_\alpha(m).
 \end{align}
Straightforward computation based on the identity $\Gamma(1+x)=x\Gamma(x)$ allows us to get
\begin{align}\label{difference}
  \mathtt{W}_\alpha(m+1)-\mathtt{W}_\alpha(m)=-\frac{1-\alpha}{1+m-\frac\alpha2}\mathtt{W}_\alpha(m).
\end{align}
In particular this implies that $\{\mathtt{W}_\alpha(m)\}_{m\geqslant 1}$ and $\{\alpha^{(1)}_{m,b}\}_{m\geqslant 1}$
are strictly decreasing.
Now let us move to the analysis  of $\alpha_{m,b}^{(2)}$.
Recalling from \eqref{I-m-q} that
\begin{equation}\label{eq:Im}
  I_m(z)=\sum_{n=0}^\infty \frac{\left(\frac{1}{2}z\right)^{m+2n}}{n! (m+n)!},
\end{equation}
and (e.g. see 6.22 (5) of \cite{Wat66})
\begin{equation*}
  K_m(x)=\int_0^{+\infty}e^{-x\cosh t} \cosh(m t) \dd t>0,\quad \forall x\in \RR,
\end{equation*}
we see that
\begin{equation}\label{eq:Im_b}
  I_m(bx)\leqslant b^mI_m(x),\qquad \forall\, x \geqslant 0, 0<b<1,
\end{equation}
and
\begin{align*}
  I_m(x) K_m(x)\geqslant 0,\quad \forall\, x\geqslant 0.
\end{align*}
A refined version of \eqref{eq:Im_b} is that for all $x> 0$ and $0<b<1$,
\begin{align}\label{eq:Im_b-2}
  I_m(bx) = \big( b^m -  r_{m,b}(x)\big) I_m(x),\quad \textrm{with}\;\; 0\leqslant r_{m,b}(x) \leqslant b^m\min\big\{1,\tfrac{x^2 }{4m}  \big\},
\end{align}
which can be easily seen from the following formula
\begin{align*}
  I_m(bx) - b^m I_m(x) & = - \sum_{n=1}^\infty \big(b^m - b^{m+2n}\big) \frac{(\tfrac{1}{2}x)^{m+2n}}{n! (m+n)!} \\
  & = - \frac{x^2}{4} \sum_{n=0}^\infty \big(b^m- b^{m+2n+2}\big) \frac{(\tfrac{1}{2}x)^{m+2n}}{(n+1)! (m+n+1)!} .
\end{align*}
Hence, we infer that
\begin{align}\label{eq:alp-m2}
  0\leqslant \alpha_{m,b}^{(2)}
  & = \frac{2}{\pi}\sin (\tfrac{\alpha\pi}{2}) b^m \int_0^\infty \rho^{\alpha -1} I_m(b\rho) K_m(\rho) \dd \rho
  - \frac{2}{\pi}\sin (\tfrac{\alpha\pi}{2}) \int_0^\infty \rho^{\alpha-1} r_{m,b}(\rho) I_m(b\rho) K_m(\rho) \dd \rho
  \nonumber \\
  & = \frac{2}{\pi}\sin (\tfrac{\alpha\pi}{2}) b^{2m} \int_0^\infty \rho^{\alpha -1} I_m(\rho) K_m(\rho) \dd \rho \nonumber \\
  &\quad - \frac{2}{\pi}\sin (\tfrac{\alpha\pi}{2}) \int_0^\infty \rho^{\alpha-1} r_{m,b}(\rho) \big(I_m(b\rho) + b^m I_m(\rho) \big)
  K_m(\rho) \dd \rho  \nonumber \\
  & \triangleq \alpha_{m,b}^{(21)} - \alpha_{m,b}^{(22)}.
\end{align}
In view of the formula \eqref{eq:J1-2} and the fact that (see e.g. 8.334 of \cite{GR15})
\begin{align*}
  \Gamma(\tfrac{\alpha}{2})\Gamma(1-\tfrac{\alpha}{2}) = \frac{\pi}{\sin(\pi\tfrac{\alpha}{2})},
\end{align*}
we get
\begin{align}\label{eq:alpha-m-21}
  \alpha_{m,b}^{(2)} \leqslant \alpha_{m,b}^{(21)} & = \frac{2}{\pi} \sin(\tfrac{\alpha \pi}{2}) b^{2m}
  \frac{\Gamma(\tfrac{\alpha}{2})\Gamma(1-\alpha)}{2^{2-\alpha} \Gamma(1-\tfrac{\alpha}{2})}
  \frac{ \Gamma(m+\tfrac{\alpha}{2}) }{\Gamma(m +1 -\tfrac{\alpha}{2}) } \\
  & = \frac{1}{\pi} \sin(\tfrac{\alpha \pi}{2}) b^{2m} \frac{\Gamma(\tfrac{\alpha}{2}) \Gamma(1-\alpha)}{2^{1-\alpha}\Gamma(1-\tfrac{\alpha}{2})}
  \mathtt{W}_\alpha(m) \nonumber \\
  & = \frac{ 2^{\alpha-1} \Gamma(1-\alpha)}{\Gamma^2(1-\tfrac\alpha2)}b^{2m} \mathtt{W}_\alpha(m). \nonumber
\end{align}
For the remainder term $\alpha_{m,b}^{(22)}$,  we can show from the second inequality of  \eqref{eq:Im_b-2} that
\begin{align*}
\forall  \delta\in (0,1),\quad r_{m,b}(\rho) \leqslant b^m (\tfrac{1}{4m})^{\delta/2}\rho^\delta,
\end{align*}
combined with \eqref{eq:J1-2} it yields that for every $\delta \in (0,1-\alpha)$,
\begin{align}\label{eq:alpha-m-22}
  0\leqslant \alpha_{m,b}^{(22)} &
  \leqslant \frac{4}{\pi} \frac{b^{2m}}{(4m)^{\delta/2}} \sin(\tfrac{\alpha \pi}{2})
  \int_0^\infty \rho^{\alpha-1+\delta} I_m(\rho) K_m(\rho) \dd \rho \nonumber \\
  & \leqslant  \frac{4}{\pi} \frac{b^{2m}}{(4m)^{\delta/2}} \sin(\tfrac{\alpha \pi}{2})
  \frac{\Gamma(\tfrac{\alpha +\delta}{2}) \Gamma(m +\tfrac{\alpha+\delta }{2}) \Gamma(1-\alpha-\delta)}
  {2^{2-\alpha-\delta} \Gamma(m+1 -\tfrac{\alpha+\delta}{2}) \Gamma(1-\tfrac{\alpha+\delta}{2})},
\end{align}
which will be useful in the sequel.
Notice that $\{\alpha_{m,b}^{(2)}\}$ is positive and $\{\mathtt{W}_\alpha(m)\}$
is positive and decreasing, then
\begin{align*}
  \alpha_{m+1,b}^{(2)}- \alpha_{m,b}^{(2)} \geqslant -\alpha_{m,b}^{(2)}
  \geqslant - \frac{b^{2m} 2^{\alpha-1} \Gamma(1-\alpha)}{\Gamma^2(1-\tfrac\alpha2)} \mathtt{W}_\alpha(m).
\end{align*}
Thus in combination with \eqref{eq:alp-m-exp}, \eqref{eq:alp-m1} and \eqref{difference} we obtain
\begin{align*}
  \alpha_{m+1,b}-\alpha_{m,b}&\leqslant -\frac{2^{\alpha-1}\Gamma(1-\alpha)}{b^{\alpha}\Gamma^2(1-\frac{\alpha}{2})}
  \frac{1-\alpha}{1+m-\frac\alpha2} \mathtt{W}_\alpha(m)+\frac{b^{2m} 2^{\alpha-1} \Gamma(1-\alpha)}{\Gamma^2(1-\tfrac\alpha2)}  \mathtt{W}_\alpha(m)\\
  &\leqslant\frac{2^{\alpha-1}\Gamma(1-\alpha)}{\Gamma^2(1-\frac{\alpha}{2})}
  \frac{b^{-\alpha}}{1+m-\frac\alpha2} \mathtt{W}_\alpha(m)\Big(-(1-\alpha)  +(1+m-\tfrac{\alpha}{2})b^{2m+\alpha}\Big).
\end{align*}
Now, we intend to check the monotonicity of $\{\alpha_{m,b}\}_{m\geq 1}$ for fixed $\alpha,b$ and for $m$ large enough.\\
Since for fixed $\alpha, b\in (0,1)$, $\lim\limits_{m\rightarrow \infty}(m+1)b^{2m} =0$,
there exists a constant $m^*=m(\alpha,b)\in\NN$ such that $\alpha-1 + (1+m -\tfrac{\alpha}{2}) b^{2m+\alpha}<0$ for every $m\geq m^*$,
thus we have
\begin{align*}
  \forall\, m\geqslant m^*,\quad \alpha_{m+1,b}-\alpha_{m,b}<0.
\end{align*}
On the other hand by using the inequality  $m b^{2m} \leqslant (-\frac{1}{e \log b})b^m$, $\forall m\geqslant 1$, we infer
\begin{align*}
  -(1-\alpha) + (1+m -\tfrac{\alpha}{2})b^{2m+\alpha} < -(1-\alpha) + \big(1-\tfrac{\alpha}{2} -\tfrac{1}{e\log b}\big) b^m,
\end{align*}
then we may choose  $m^* \leqslant \frac{1}{\log b} \big(\log \frac{1-\alpha}{1-\frac{\alpha}{2} - (e\log b)^{-1}}\big)$.
This gives the condition \eqref{case2}.\\
Next, we shall check the condition \eqref{case1} dealing with the monotonicity of
$\{\alpha_{m,b}\}_{m\geqslant1}$ for fixed $\alpha\in (0,1)$ but small $b$. For this aim we introduce the function
\begin{align*}
  \varphi(m) \triangleq (1+m-\tfrac{\alpha}{2})b^{2m+\alpha}, \quad m\in[1,\infty).
\end{align*}
Differentiating in $m$ gives
\begin{align*}
  \varphi^\prime(m)=b^{2m+\alpha}\Big(1+2 (\log b) (1+m-\tfrac{\alpha}{2})\Big).
\end{align*}
We can observe that if $\varphi^\prime(1)\leqslant 0$ then $\varphi^\prime(m)\leqslant 0$ for any $m\geqslant1$.
Let $b^*\in (0,1)$ be such that
\begin{align}\label{Cond-10}
  1+2 (\log b^*) (2-\tfrac{\alpha}{2})\leqslant 0.
\end{align}
Then for any $b\in[0,b^*]$ and $m\geqslant1$,
\begin{align*}
  \varphi^\prime(m)\leqslant 0.
\end{align*}
Consequently, for any $b\in[0,b^*]$
\begin{align*}
  \forall\, m\geqslant 1,\quad \alpha-1+\varphi(m)&\leqslant
  \alpha-1+\varphi(1)=\alpha-1+(2-\tfrac{\alpha}{2})b^{2+\alpha}\\
  &\leqslant \alpha-1+ (2-\tfrac{\alpha}{2})b^2.
\end{align*}
Fix $b^*\triangleq \sqrt{\tfrac{1-\alpha}{2-\frac{\alpha}{2}}}$,
then the condition \eqref{Cond-10} becomes
\begin{align*}
  1+\log(\tfrac{1-\alpha}{2-\frac{\alpha}{2}})(2-\tfrac{\alpha}{2})\leqslant 0.
\end{align*}
This inequality holds true for any $\alpha\in (0,1)$. Finally we get the following
\begin{align*}
  \forall \alpha\in (0,1), b\in (0,b^*), m\geqslant 1,\quad \alpha_{m+1,b}-\alpha_{m,b}<0.
\end{align*}
The last point to discuss is \eqref{case3} related to the monotonicity of $\{\alpha_{m,b}\}_{m\geqslant 1}$ for fixed $b\in ]0,1[$ and small $\alpha$.
In view of \eqref{eq:alp-m1}, \eqref{eq:alp-m2} and \eqref{eq:alpha-m-21}, we have
\begin{align}\label{decompo-V1}
  \alpha_{m,b} = \frac{2^{\alpha-1}\Gamma(1-\alpha)}{\Gamma^2(1-\frac{\alpha}{2})}
  \Big(\frac{1}{b^\alpha} -b^{2m} \Big) \mathtt{W}_\alpha(m) + \alpha_{m,b}^{(22)}.
\end{align}
By choosing $\delta=\tfrac{1}{2}$ in \eqref{eq:alpha-m-22}, we infer that (under the additional constraint $\alpha\in (0,\tfrac{1}{2})$)
\begin{align}\label{Estalpha22}
  |\alpha_{m,b}^{(22)}| \leqslant \frac{4}{\pi} \frac{b^{2m}}{(4m)^{1/4}}
  \sin(\tfrac{\alpha \pi}{2})
  \frac{\Gamma(\tfrac{\alpha}{2}+\tfrac{1}{4}) \Gamma(m +\tfrac{\alpha}{2}+\tfrac{1}{4}) \Gamma(\tfrac{1}{2}-\alpha)}
  {2^{3/2-\alpha} \Gamma(m+\tfrac{3}{4} -\tfrac{\alpha}{2}) \Gamma(\tfrac{3}{4}-\tfrac{\alpha}{2})}\cdot
\end{align}
Applying  Gautschi's inequality
\begin{align*}
  \forall x>0, \forall s\in (0,1),\quad x^{1-s}<\frac{\Gamma(x+1)}{\Gamma(x+s)}<(x+1)^{1-s},
\end{align*}
we deduce that for any $m\geqslant 1$, $a,b\in (0,1)$,
\begin{align*}
  \frac{(1+m)^{a-1}}{m^{b-1}}\leqslant \frac{\Gamma(m +a)}{ \Gamma(m+b) }&\leqslant \frac{m^{a-1}}{(m+1)^{b-1}},
\end{align*}
leading to
\begin{align}\label{frac-gamm-imp}
  \tfrac12m^{a-b}\leqslant \frac{\Gamma(m +a)}{ \Gamma(m+b) }&\leqslant 2m^{a-b}.
\end{align}
It follows from the right-hand side inequality of \eqref{frac-gamm-imp} that
\begin{align}\label{inequ-gamm11}
  \forall m\geqslant1,\forall \alpha\in (0,1),\quad \frac{\Gamma(m +\tfrac{\alpha}{2}+\tfrac{1}{4})}
  { \Gamma(m+\tfrac{3}{4} -\tfrac{\alpha}{2}) }\leqslant 2 m^{\alpha-\frac12}.
\end{align}
Inserting \eqref{inequ-gamm11} into \eqref{Estalpha22} yields for any $m\geqslant1$, $\alpha\in (0,\tfrac{1}{3})$,
\begin{align}\label{Estalpha23}
  \nonumber |\alpha_{m,b}^{(22)}| &\leqslant  \tfrac{4\sqrt2}{\pi} \frac{b^{2m}}{m^{\frac34-\alpha}}
  \sin(\tfrac{\alpha \pi} {2})
  \frac{\Gamma(\tfrac{\alpha}{2}+\tfrac{1}{4}) \Gamma(\tfrac{1}{2}-\alpha)}
  {2^{3/2-\alpha} \Gamma(\tfrac{3}{4}-\tfrac{\alpha}{2})}\\
  &\leqslant \tfrac{C_0 b^{2m}}{m^{3/4-\alpha}} \alpha,
\end{align}
with $C_0$ an absolute constant. On the other hand, by
applying the right-hand side inequality in  \eqref{frac-gamm-imp} we infer
\begin{align}\label{Est-alpham}
  \forall m\geqslant1, \forall \alpha\in (0,1),\quad \mathtt{W}_\alpha(m) \leqslant 2 m^{\alpha-1}.
\end{align}
Combining this inequality with  \eqref{difference} we deduce that
\begin{align}\label{difference-N10}
  \nonumber \forall m\geqslant 1,\forall \alpha\in(0,1),\quad\mathtt{W}_\alpha(m+1)-\mathtt{W}_\alpha(m)&\leqslant -\frac{1-\alpha}{2(1+m-\frac\alpha2)m^{1-\alpha}}\\
  &\leqslant -\frac{1}{8m^{2-\alpha}}\cdot
\end{align}
Putting together \eqref{decompo-V1} with \eqref{Estalpha23}, \eqref{Est-alpham} and \eqref{difference-N10},
allows us to get for any $m\geqslant1, \alpha\in (0,\tfrac{1}{3})$,
\begin{equation*}
  \alpha_{m+1,b}-\alpha_{m,b}  \leqslant  -C_1  m^{\alpha-2}+C_2 b^{2m},
\end{equation*}
for some absolute constants $C_1,C_2>0$. Therefore, for fixed $b\in (0,1)$ we can find $\overline{m}=\overline{m}(b)\in\NN$ such that
\begin{equation}\label{eq:alpham-conv}
  \forall\,\alpha\in (0,\tfrac{1}{3}),\,\forall m\geqslant \overline{m},\quad \alpha_{m+1,b}-\alpha_{m,b}  <  0.
\end{equation}
By virtue of \eqref{decompo-V1} we may write 
\begin{align*}
  \alpha_{m,b} =
  \mathtt{V}_m(\alpha)+ \alpha_{m,b}^{(22)},\qquad \mathtt{V}_m(\alpha)
  \triangleq  \frac{2^{\alpha-1}\Gamma(1-\alpha)}{\Gamma^2(1-\frac{\alpha}{2})}
  \Big(\frac{1}{b^\alpha} -b^{2m} \Big) \mathtt{W}_\alpha(m),
\end{align*}
which can be decomposed as follows
\begin{align}\label{decompo-V01}
 \nonumber  \alpha_{m,b} &= \mathtt{V}_m(0)+
\big( \mathtt{V}_m(\alpha) - \mathtt{V}_m(0)\big)+ \alpha_{m,b}^{(22)}\\
&\triangleq \mathtt{V}_m(0)+\varrho_m(\alpha).
\end{align}
Using the mean value theorem together with \eqref{Estalpha23} yield
\begin{align}\label{alpha-bound}
  \forall \alpha\in[0,\tfrac13],\,\forall m\in[1,\overline{m}+1],\quad |\varrho_m(\alpha)|\leqslant C_3 \alpha,
\end{align}
for some constant $C_3=C_3(b)$ depending only in $b$.
It is obvious that
\begin{align*}
  \mathtt{V}_m(0)=\frac{1-b^{2m}}{2m}.
\end{align*}
According to \cite[Proposition 15]{DHHM}, the sequence $\{\mathtt{V}_m(0)\}_{m\geq 1}$ is strictly decreasing in $m$. Then we can find $\varepsilon=\varepsilon(b)>0$ such that
\begin{align*}
  \forall m\in [1,\overline{m}],\quad \mathtt{V}_{m+1}(0) - \mathtt{V}_m(0)<-\varepsilon.
\end{align*}
Combining this inequality with  \eqref{decompo-V01} and \eqref{alpha-bound} implies
\begin{align*}
  \forall m\in [1,\overline{m}],\quad \alpha_{m+1,b}-\alpha_{m,b}<-\varepsilon+2C_3\alpha.
\end{align*}
By taking $\alpha^*\triangleq \frac{\varepsilon}{2C_3}$, which depends only on $b$,  we get
\begin{align*}
  \forall \alpha\in  (0,\alpha^*),\,\forall m\in [1,\overline{m}],\quad \alpha_{m+1,b}-\alpha_{m,b}<0.
\end{align*}
It follows from \eqref{eq:alpham-conv} that
\begin{equation*}
  \forall \alpha\in (0,\alpha^*),\,\forall m\geqslant 1,\quad \alpha_{m+1,b}-\alpha_{m,b}  <  0.
\end{equation*}
This concludes the proof of the monotonicity.\\[1mm]
Let us move to the computation of $\lim\limits_{\alpha\to0} \Omega_{m,b}^\alpha$. Putting together \eqref{decompo-V1}, \eqref{Estalpha23} yields
\begin{align*}
  \lim_{\alpha\to0}\alpha_{m,b} = \tfrac12
  \Big(1 -b^{2m} \Big) \mathtt{W}_0(m)=\frac{1-b^{2m}}{2m}\cdot
\end{align*}
For $-V_1(0)$ given by \eqref{eq:Omg-al-m}, by virtue of \eqref{V-1-transfer}, one can expect that
\begin{align*}
  \lim_{\alpha\rightarrow 0} \big(-V_1(0)\big)
  = \frac{1}{2} + \frac{2}{\pi}\lim_{\alpha\rightarrow 0} \bigg(\sin (\tfrac{\alpha\pi}{2})
  \int_0^{\infty}\rho^{\alpha-1}\frac{I^2_1(b\rho)K_{0}(\rho)}{I_0(\rho)}\dd\rho\bigg).
\end{align*}
In light of \eqref{eq:Im}, we observe that
\begin{align*}
  0\leqslant \frac{I_1(bx)}{I_0(x)} \leqslant \tfrac{1}{2} b x,\quad \forall x\geqslant 0.
\end{align*}
Then we use Remark \ref{rem:sned-form} and \eqref{eq:Gam-prop} to get
\begin{equation}\label{eq:V10-rem}
\begin{split}
  \Big| \int_0^{\infty}\rho^{\alpha-1}\frac{I^2_1(b\rho)K_{0}(\rho)}{I_0(\rho)}\dd\rho \Big|
  & \leqslant \frac{b}{2} \int_0^\infty \rho^\alpha I_1(b\rho) K_0(\rho) \dd \rho  \\
  & = \frac{b^2 \Gamma^2(1+\tfrac{\alpha}{2})}{2^{2-\alpha} }
  F\big(1+ \tfrac{\alpha}{2},1+\tfrac{\alpha}{2};2;b^2\big).
\end{split}
\end{equation}
Note that the hypergeometric function $F(1+\tfrac{\alpha}{2},1+\tfrac{\alpha}{2};2; b^2)$ is a convergent series for every $\alpha\in (0,1)$
and $b\in (0,1)$, and it converges to $F(1,1;2;b^2)= -\frac{\log(1-b^2)}{b^2} $
as $\alpha\rightarrow 0$ (e.g. see 9.12 of \cite{GR15}). Thus we find
\begin{align*}
  \lim_{\alpha\rightarrow 0} \big(-V_1(0)\big)= \tfrac{1}{2}.
\end{align*}
Hence, we have
$\lim\limits_{\alpha\rightarrow 0}\Omega^\alpha_{m,b}  = \lim\limits_{\alpha\rightarrow 0} \big(-V_1(0) -\alpha_{m,b} \big) = \frac{m-1+b^{2m}}{2m}$ as desired.

Finally, we consider $\lim\limits_{\alpha\rightarrow 1} \Omega_{m,b}^\alpha$.
In view of \eqref{V-1-transfer} and \eqref{eq:alp-m-exp}, and using the monotone convergence theorem, we have
\begin{align*}
  \lim_{\alpha\to 1}\Omega^{\alpha}_{m,b}=&\frac{1}{\pi b}
  \lim_{\alpha\to 1}\Gamma(1-\alpha)\left(\frac{\Gamma(1+\frac{\alpha}{2})}{\Gamma(2-\frac{\alpha}{2})}
  -\frac{\Gamma(m+\frac{\alpha}{2})}{\Gamma(m+1-\frac{\alpha}{2})}\right) \\
  &+\frac{2}{\pi}
  \int_0^\infty \left(\frac{I^2_1(b\rho)K_0(\rho)}{I_0(\rho)}+\frac{I^2_m(b\rho)K_m(\rho)}{I_m(\rho)}\right)\dd\rho.
\end{align*}
As proved by Lemma 3-(1) in \cite{HH15}, we obtain
\begin{equation*}
  \frac{1}{\pi b}\lim_{\alpha\to 1}\Gamma(1-\alpha)
  \left(\frac{\Gamma(1+\frac{\alpha}{2})}{\Gamma(2-\frac{\alpha}{2})}-\frac{\Gamma(m+\frac{\alpha}{2})}{\Gamma(m+1-\frac{\alpha}{2})}\right)
  =\frac{2}{\pi b}\sum_{k=1}^{m-1}\frac{1}{2k+1}.
\end{equation*}
In addition, using Remark \ref{rem:sned-form}, \eqref{eq:Im_b} and \eqref{eq:V10-rem}, we infer that
\begin{align*}
  \int_0^\infty \frac{I^2_1(b\rho)K_0(\rho)}{I_0(\rho)} \dd\rho
  & \leqslant \int_0^\infty \frac{b}{2}\rho I_1(b\rho)K_0(\rho) \dd\rho \\
  & = \frac{b^2\pi }{8}
  F\big(\tfrac{3}{2},\tfrac{3}{2};2;b^2\big) <+\infty,
\end{align*}
and
\begin{align*}
  \int_0^\infty \frac{I^2_m(b\rho)K_m(\rho)}{I_m(\rho)} \dd\rho
  & \leqslant \int_0^\infty b^m I_m(b\rho) K_m(\rho) \dd\rho \\
  & = b^{2m}\frac{\Gamma(\frac{1}{2})\Gamma(m+\frac{1}{2})}{\Gamma(m+1)} F(m+\tfrac{1}{2},\tfrac{1}{2};m+1;b^2) \\
  & \leqslant \tfrac{\sqrt{\pi}}{2} F(1,\tfrac{1}{2};1;b^2) b^{2m} < +\infty.
\end{align*}
Hence, it follows that
\begin{equation*}
  \lim_{\alpha\to 1}\Omega^\alpha_{m,b}=
  \frac{2}{\pi b}\sum_{k=1}^{m-1}\frac{1}{2k+1}+\frac{2}{\pi}
  \int_0^\infty \left(\frac{I^2_1(b\rho)K_0(\rho)}{I_0(\rho)}+\frac{I^2_m(b\rho)K_m(\rho)}{I_m(\rho)}\right)\dd\rho.
\end{equation*}
This ends the proof the first point \textbf{(i)}.
\vskip1mm

\textbf{(ii)}
By virtue of Lemma A.1 in \cite{HHM20} we get
\begin{align*}
\mathtt{W}_m(\alpha)=\tfrac{1}{m^{1-\alpha}}+O\left(\tfrac{1}{m^{3-\alpha}}\right).
\end{align*}
Then we deduce from \eqref{eq:alp-m-exp}, \eqref{eq:alp-m1} and \eqref{eq:alpha-m-21} that for any $\alpha, b\in(0,1),$
\begin{align*}
  \alpha_{m,b} = \frac{2^{\alpha-1}\Gamma(1-\alpha)}{b^{\alpha}\Gamma^2(1-\frac{\alpha}{2})}m^{\alpha-1}+O\left(\tfrac{1}{m^{3-\alpha}}\right)+O(b^{2m}).
\end{align*}
This concludes the proof of  \eqref{eq:alpha_m^1_bound}.
\end{proof}

\subsection{Proof of Proposition \ref{prop:other-condition}}


\textbf{(i)} Consider 
\begin{align}\label{eq:h-exp}
  \theta\in\RR\mapsto h(\theta)=\sum\limits_{n=1}^{\infty}a_n\cos (nm\theta)\in X_m,\quad \textrm{with} \quad a_n\in \RR,
\end{align} 
and let us check that
\begin{align}\label{eq:F-h-general}
  \partial_{r}F(\Omega,0) h(\theta)
  = - \sum_{n=1}^{\infty}a_n \big(\Omega-\Omega^{\alpha}_{nm,b}\big)nm \sin(nm\theta).
\end{align}
Then the result of \textbf{(i)} about the kernel structure follows immediately from this description. To proceed with, we apply first  \eqref{F-Lin-r} with $r=0$ in \eqref{F-Lin-r}, leading to
\begin{align}\label{F-Lin-0}
  \partial_r F(0) h(\theta)& =\big[\Omega+V_1(0)(\theta)\big]
  h'(\theta) + V_2(0)(\theta)h(\theta)+ V_3(0,h)(\theta)+ V_4(0,h)(\theta).
\end{align}	
Below we shall analyse the right-hand side terms of \eqref{F-Lin-0} one by one.
From \eqref{Veq-F-1}, we have
\begin{align}\label{Veq-FL}
  V_1(0)(\theta)& = b^{-1}\int_0^{2\pi} \int_0^b  \nabla_x K^\alpha \big(be^{i\theta},\rho e^{i\eta}\big)
  \cdot e^{i\theta} \rho \,\dd\rho \dd\eta.
\end{align}
Noting that (owing to \eqref{eq:K-der})
\begin{align*}
  \nabla_x K^\alpha \big(be^{i\theta},\rho_2 e^{i\eta}\big)\cdot e^{i\theta}
  = \partial_{\rho_1} G \big(b,\theta,\rho_2,\eta\big),
\end{align*}
which yields in view of \eqref{eq:G-exp} and Fubini's theorem
\begin{align*}
  V_1(0)(\theta)& =b^{-1}\int_0^{2\pi}\int_0^b(\partial_{\rho_1} G)\big(b,\theta,\rho,\eta\big)\rho \,\dd\rho \dd\eta \\
  &=2\pi b^{-1}\sum_{k\geqslant 1}x_{0,k}^{\alpha-1}A_{0,k}^2J_0^\prime\big(b x_{0,k} \big)
  \int_0^b J_0\big(x_{0,k}\rho \big) \rho \dd\rho \\
  &=-2\pi b^{-1}\sum_{k\geqslant 1} x_{0,k}^{\alpha-1}A_{0,k}^2J_1\big(b x_{0,k} \big)
  \int_0^b J_0\big(x_{0,k}\rho \big) \rho \dd\rho.
\end{align*}
In addition, thanks to the identity $\displaystyle{\int_0^a t J_0(t)\dd t=a J_1(a)}$ (see e.g. 6.561 of \cite{GR15}), it follows that
\begin{align*}
  \int_0^b J_0\big(x_{0,k}\rho \big) \rho\, \dd\rho & = \frac{b}{x_{0,k}}J_1\big(x_{0,k}b\big).
\end{align*}
Hence we find that $V_{1}(0)(\theta)$ is independent of $\theta$ and by virtue of \eqref{eq:Ank},
\begin{align}\label{Veq-NF-1}
  V_1(0)&=-2\sum_{ k\geqslant 1}x_{0,k}^{\alpha-2}\frac{J_1^2\big(x_{0,k} b \big)}{J_1^2(x_{0,k} )}\cdot
\end{align}
For $V_2(r)(\theta)$ given by \eqref{Veq-F-2}, we use
$\partial_\theta(R(\theta)e^{i\theta})|_{r=0} = ibe^{i\theta}$, together with \eqref{eq:F(0)}-\eqref{eq:K-itheta}
and \eqref{eq:G-exp} to find
\begin{align*}
  V_2(0)(\theta)= &\, b^{-1} \int_0^{2\pi}\int_0^b \left(\nabla^2_xK_1^\alpha(be^{i\theta},\rho e^{i\eta})
  \cdot e^{i\theta}\right)\cdot (ibe^{i\theta})\rho \,\dd\rho \dd\eta \\
  &+b^{-1}\int_0^{2\pi} \int_{0}^{b}\nabla_xK^\alpha\left(be^{i\theta},\rho e^{i\eta}\right)
  \cdot (e^{i\theta}i) \rho \,\dd\rho \dd\eta \\
  =&\int_0^{2\pi}\int_0^{b}\left(\nabla^2_x K_1^\alpha(be^{i\theta},\rho e^{i\eta})\cdot e^{i\theta}\right)
  \cdot (ie^{i\theta})\rho \,\dd\rho \dd\eta
  + b^{-2}\int_0^{2\pi} \int_0^b \partial_{\theta}G(b,\theta,\rho,\eta) \rho \, \dd\rho \dd\eta \\
  =&\int_0^{2\pi} \int_0^b\left(\nabla^2_xK_1^\alpha(be^{i\theta},\rho e^{i\eta})\cdot e^{i\theta}\right)
  \cdot (ie^{i\theta})\rho \,\dd\rho \dd\eta.
\end{align*}
By virtue of the splitting  \eqref{eq:Kalp} we may write
\begin{align*}
  \nabla_x^2K_1^\alpha = \nabla_x(\nabla_x K^\alpha - \nabla_xK_0^\alpha) = \nabla_x^2 K^\alpha+\nabla_x\nabla_yK_0^\alpha.
\end{align*}
Applying Gauss-Green theorem  and using straightforward computations we deduce that
\begin{align*}
  & \int_0^{2\pi}\int_0^b \left(\nabla_y\nabla_xK_0^\alpha (be^{i\theta},\rho e^{i\eta})\cdot (ie^{i\theta})\right)
  \cdot e^{i\theta}\rho \dd \rho \dd\eta \\
  & = \iint_{b \DD} \left(\nabla_y\nabla_xK_0^\alpha (be^{i\theta},y)\cdot (ie^{i\theta})\right)
  \cdot e^{i\theta} \dd y  \\
  & =\int_0^{2\pi} \left(\nabla_x K_0^\alpha (be^{i\theta},b e^{i\eta})(be^{i\eta})\cdot (ie^{i\theta})\right)
  \cdot e^{i\theta} \dd\eta \\
  & =b\int_0^{2\pi}\nabla_xK_0^\alpha(be^{i\theta}-be^{i\eta})\cdot e^{i\theta} \sin (\eta-\theta)\dd\eta \\
  & =(-\alpha)c_\alpha b^{-\alpha} \int_0^{2\pi}\frac{1-\cos (\eta-\theta)}{|1-e^{i(\eta-\theta)}|^{\alpha+2}}
  \sin (\eta-\theta) \dd\eta	= 0.
\end{align*}
Next, we intend to compute the following integral
\begin{align*}
  \int_0^{2\pi} \int_0^b\left(\nabla^2_xK^\alpha\big(be^{i\theta},\rho e^{i\eta}\big)\cdot e^{i\theta}\right)
  \cdot (ie^{i\theta})\rho\, \dd\rho \dd\eta.
\end{align*}
Direct computations based on  \eqref{eq:K-der} give that
\begin{align}
  \nabla_y K^\alpha\big(be^{i\theta},\rho e^{i\eta}\big)\cdot (ie^{i\theta})
  & =\sin (\eta-\theta) \partial_{\rho_2}G + \rho^{-1}\cos (\eta-\theta) \partial_{\eta}G, \nonumber \\
  \nabla_x K^\alpha\big(be^{i\theta}, \rho e^{i\eta}\big)\cdot e^{i\eta}
  & = \cos (\eta-\theta) \partial_{\rho_1}G + \sin (\eta-\theta)b^{-1} \partial_{\theta}G, \nonumber \\ 
  \nabla_y K^\alpha\big(be^{i\theta},\rho e^{i\eta}\big)\cdot e^{i\eta}&=\partial_{\rho_2}G(b,\theta,\rho,\eta), \nonumber
\end{align}
and
\begin{equation*}
  \Big(\nabla^2_x K^\alpha (be^{i\theta},\rho e^{i\eta})\cdot
  e^{i\theta}\Big)\cdot(ie^{i\theta})=b^{-1}\partial_{\rho_1}\partial_{\theta}G(b,\theta,\rho,\eta)
  - b^{-2}\partial_{\theta}G(b,\theta,\rho,\eta).
\end{equation*}
Thus, we infer from  \eqref{eq:G-exp} and Fubini's theorem
\begin{align}\label{Veq-NF-2}
  & V_2(0)(\theta) = \int_0^{2\pi}\int_0^{b}\left(\nabla^2_xK^\alpha (be^{i\theta},\rho e^{i\eta})
  \cdot e^{i\theta}\right)\cdot (ie^{i\theta})\rho \dd\rho \dd\eta \nonumber \\	
  & = \int_0^{2\pi}\int_0^{b}\Big(\rho b^{-1}\partial_{\rho_1}
  \partial_{\theta}G(b,\theta,\rho,\eta)-\rho b^{-2}
  \partial_{\theta}G(b,\theta,\rho,\eta) \Big)  \dd\rho \dd\eta \nonumber \\	
  & = \int_0^{2\pi} \int_0^b \bigg(\sum_{n\in\NN\atop k\geqslant 1} x_{n,k}^{\alpha-2} A_{n,k}^2 \rho J_n(x_{n,k}\rho)
  \Big(b^{-1}x_{n,k} J_n'(x_{n,k} b) - b^{-2}J_n(x_{n,k}b) \Big)
  n\,\sin n(\eta-\theta)\bigg) \dd \rho \dd\eta \nonumber \\
  & =0.
\end{align}
The next task is to evaluate the contributions induced by  the nonlocal operators $V_3$ and $V_4$. The computation will be done in a formal way and can be justified by standard approximation arguments that we will omit here.
For $V_3(r,h)$ given by \eqref{Veq-F-3}, it is easy to check
\begin{align*}
  L_A(h)|_{r=0} 
  = -h'(\eta)\cos (\eta-\theta) + h(\eta)\sin (\eta-\theta)
  = - \partial_\eta\big(h(\eta) \cos(\eta-\theta) \big),
\end{align*}
leading to
\begin{align*}
  V_3(0,h)(\theta)= \int_0^{2\pi}\nabla_xK_1^\alpha (be^{i\theta},be^{i\eta})\cdot (ibe^{i\theta})h(\eta) \dd\eta
  -\int_0^{2\pi}K_0^\alpha(be^{i\theta} - be^{i\eta})L_A(h)|_{r=0}(\eta)\dd\eta.
\end{align*}
Due to that
\begin{align*}
  K_0^\alpha(be^{i\theta} - be^{i\eta})=K_0^\alpha(b - be^{i(\eta-\theta)}),\quad
\end{align*}
and
$|1-e^{i\eta}|= \big|2\sin \frac{\eta}{2}\big|$,
we use the integration by parts to infer that
\begin{align*}
  & \int_0^{2\pi}K_0^\alpha(be^{i\theta} - be^{i\eta})L_A(h)|_{r=0}(\eta)\dd\eta \nonumber\\
  &= - \sum_{n\geqslant1} a_n \int_0^{2\pi} K_0^\alpha(b - be^{i(\eta-\theta)})
  \partial_\eta\big(\cos (nm\eta) \cos (\eta-\theta)
  \big) \dd\eta \nonumber \\
  & = - \sum_{n\geqslant1} a_n \int_0^{2\pi} K_0^\alpha(b - be^{i\eta})
  \partial_\eta\big(\cos (nm \eta+ n m\theta) \cos \eta
  \big) \dd\eta \nonumber \\
  & = \sum_{n\geqslant1} a_n \int_0^{2\pi} \partial_\eta \Big(\frac{c_{\alpha}}{b^\alpha (2\sin \frac{\eta}{2})^\alpha} \Big)
  \cos (nm \eta+ n m\theta) \cos \eta \,\dd\eta \nonumber \\
  & = - \sum_{n\geqslant 1} a_n \frac{\alpha c_\alpha}{b^\alpha}\int_0^{2\pi}
  \frac{\cos(nm\theta) \cos(nm\eta) - \sin (nm\theta)\sin (nm\eta)}{|2\sin \frac{\eta}{2}|^{\alpha+2}} \cos \eta \sin \eta\,\dd\eta \\
  & = \sum_{n\geqslant 1} a_n \frac{\alpha c_\alpha}{b^\alpha} \sin (nm\theta)\int_0^{2\pi}
  \frac{ \sin (nm\eta) \cos \eta \sin \eta}{(2\sin \frac{\eta}{2})^{\alpha+2}} \dd\eta.
\end{align*}
Note that $\nabla_x K^\alpha_0(x-y) = -\alpha  c_\alpha \frac{x-y}{|x-y|^{\alpha+2}}$,
and for $e_{nm}(\theta)\triangleq e^{inm\theta}$, we have
\begin{align*}
  & \int_0^{2\pi}\nabla_xK_0^\alpha(be^{i\theta}-be^{i\eta})\cdot (ibe^{i\theta}) h(\eta) \dd\eta \nonumber \\
  = & \sum_{n\geqslant 1} a_n\,\mathrm{Re} \left( \int_0^{2\pi}\nabla_xK_0^\alpha (be^{i\theta} - be^{i\eta})\cdot (ibe^{i\theta})
  e_{nm}(\eta)\dd\eta \right) \nonumber \\
  = & \sum_{n\geqslant 1} a_n\,\mathrm{Re}\left( \frac{\alpha c_\alpha}{b^\alpha} \int_0^{2\pi} \frac{(e^{i\eta}-e^{i\theta} )\cdot
  (i e^{i\theta})}{|1-e^{i(\eta-\theta)}|^{\alpha+2}} e_{nm}(\eta) \dd \eta \right) \nonumber \\
  = & \sum_{n\geqslant 1} a_n\, \mathrm{Re}\left(\frac{\alpha c_\alpha}{b^\alpha} \int_0^{2\pi}
  \frac{\sin (\eta-\theta)}{|1-e^{i(\eta-\theta)}|^{\alpha+2}} e_{nm}(\eta) \dd\eta\right).
\end{align*}
By a change of variables and applying the orthogonality of trigonometric functions, we further deduce that
\begin{align*}
  & \quad \int_0^{2\pi}\nabla_xK_0^\alpha(be^{i\theta}-be^{i\eta})\cdot (ibe^{i\theta}) h(\eta) \dd\eta \nonumber \\
  &  =\sum_{n\geqslant 1} a_n\, \mathrm{Re}\left(\frac{\alpha c_\alpha}{b^\alpha} e_{nm}(\theta)
  \int_0^{2\pi} \frac{\sin \eta}{|1-e^{i\eta}|^{\alpha+2}}  e_{nm}(\eta)\dd\eta \right) \nonumber \\
  &  = \sum_{n\geqslant 1} a_n\, \mathrm{Re} \left(i \frac{\alpha c_\alpha}{b^\alpha} e_{nm}(\theta) \int_0^{2\pi}
  \frac{\sin \eta}{|1-e^{i\eta}|^{\alpha+2}}\sin (nm\eta) \dd\eta \right) \nonumber \\
  &  = -\sum_{n\geqslant 1} a_n\, \frac{\alpha c_\alpha}{b^\alpha} \sin (nm\theta)
  \int_0^{2\pi} \frac{\sin \eta\,\sin(nm\eta)}{|1-e^{i\eta}|^{\alpha+2}} \dd\eta.
\end{align*}
Using \eqref{eq:Ank}, \eqref{eq:G-exp} and \eqref{eq:K-itheta}, one can see that
\begin{align*}
  & \int_0^{2\pi}\nabla_xK^\alpha(be^{i\theta}, be^{i\eta})\cdot \big(ibe^{i\theta}\big) h(\eta) \dd\eta \nonumber\\
  = & \sum_{n\geqslant 1} a_n\,\mathrm{Re}\left(\int_0^{2\pi}\nabla_xK^\alpha(be^{i\theta},be^{i\eta})\cdot
  \big(ibe^{i\theta}\big) e_{nm}(\eta)\dd\eta \right) \nonumber \\
  = & \sum_{n\geqslant 1} a_n\, \mathrm{Re}\left( \int_0^{2\pi}\partial_{\theta}G(b,\theta,b,\eta)e_{nm}(\eta) \dd\eta \right) \nonumber \\
  =& \sum_{n\geqslant 1} a_n\, \mathrm{Re} \bigg( \sum_{\ell\in \mathbb{N}\atop k\geqslant  1} x_{\ell,k}^{\alpha-2} A_{\ell,k}^2 J_\ell^2(x_{\ell,k}b)
  \int_0^{2\pi} \ell \sin \ell(\eta-\theta) e^{inm\eta} \dd\eta \bigg).
\end{align*}
Consequently, we find
\begin{align*}
  & \int_0^{2\pi}\nabla_xK^\alpha(be^{i\theta}, be^{i\eta})\cdot \big(ibe^{i\theta}\big) h(\eta) \dd\eta \nonumber\\
  = & \sum_{n\geqslant 1} a_n\, \mathrm{Re} \bigg( e_{nm}(\theta)\sum_{\ell\in \mathbb{N}\atop k\geqslant 1} x_{\ell,k}^{\alpha-2} A_{\ell,k}^2 J_\ell^2(x_{\ell,k}b)
  \int_0^{2\pi} \ell  \sin (\ell\eta) e^{inm\eta} \dd\eta \bigg) \nonumber \\
  =&  - \sum_{n\geqslant 1} a_n\,n m \,\alpha_{nm,b}\, \sin (nm\theta) ,
\end{align*}
with
\begin{align}\label{def:alp-m}
  \alpha_{m,b} \triangleq 2\sum_{k\geqslant 1}x_{m,k}^{\alpha-2}\frac{J_m^2(x_{m,k}b)}{J_{m+1}^2(x_{m,k})}.
\end{align}
Hence, gathering the preceding  identities yields
\begin{align}\label{Veq-NF-3}
  V_3(0,h)(\theta) = - \sum_{n\geqslant 1} a_n\, \Big( nm\,\alpha_{nm,b} -
  \frac{\alpha c_\alpha}{b^\alpha} \int_0^{2\pi}\frac{\sin(nm\eta) (1-\cos \eta)
  \sin \eta}{|1-e^{i\eta}|^{\alpha+2}}\dd\eta\Big) \sin (nm\theta).
\end{align}
For $V_4(r,h)$ given by \eqref{Veq-F-4}, when $r=0$, one has
\begin{align*}
  \nabla_xK_0^\alpha (be^{i\theta}- be^{i\eta})\cdot e^{i\theta}
  = - \frac{c_{\alpha} \alpha}{b^{\alpha+1}} \frac{1-\cos (\eta-\theta)}{|1-e^{i(\eta-\theta)}|^{\alpha+2}}
  = -\nabla_x K_0^\alpha(be^{i\theta} - be^{i\eta})\cdot e^{i\eta},
\end{align*}
leading to
\begin{align*}
  V_4(0,h) = &-b\int_0^{2\pi}\left(\nabla_xK_0^\alpha(be^{i\theta},be^{i\eta})\cdot
  \big(e^{i\theta} h(\theta) - e^{i\eta} h(\eta)\big) \right)\sin (\eta-\theta)\dd\eta \\
  =& \frac{c_\alpha\alpha}{b^\alpha} \int_0^{2\pi}\left(\frac{1-\cos (\eta-\theta)}{|1-e^{i(\eta-\theta)}|^{\alpha+2}}
  \big(h(\theta) + h(\eta)\big)\right)\sin (\eta-\theta)\dd\eta.
\end{align*}
Recalling the expression of $h$ in \eqref{eq:h-exp}, we infer that
\begin{align}\label{Veq-NF-4}
  V_4(0,h) & = \sum_{n\geqslant 1} a_n \,\mathrm{Re} \bigg( \frac{c_\alpha\alpha}{b^\alpha}
  \int_0^{2\pi}\left(\frac{1-\cos (\eta-\theta)}{|1-e^{i(\eta-\theta)}|^{\alpha+2}}
  (e_{nm}(\theta) + e_{nm}(\eta))\right)\sin (\eta-\theta) \dd\eta \bigg) \nonumber \\
  & = \sum_{n\geqslant 1} a_n \,\mathrm{Re}\left( \frac{c_\alpha \alpha}{b^\alpha} e_{nm}(\theta) \int_0^{2\pi}
  \left( \frac{1-\cos \eta}{|1-e^{i\eta}|^{\alpha+2}} \big(1+e_{nm}(\eta)\big)\right)\sin \eta\, \dd\eta \right) \nonumber \\
  & = \sum_{n\geqslant 1} a_n \,\mathrm{Re} \left(i \frac{c_\alpha\alpha}{b^\alpha} e_{nm}(\theta)
  \int_0^{2\pi} \frac{1-\cos \eta}{|1-e^{i\eta}|^{\alpha+2}}
  \sin (nm\eta)  \sin \eta \,\dd\eta\right) \nonumber \\
  & = - \sum_{n\geqslant 1} a_n \,\frac{\alpha c_\alpha}{b^\alpha} \sin (nm\theta)
  \int_0^{2\pi} \frac{1-\cos \eta}{|1-e^{i\eta}|^{\alpha+2}}
  \sin (nm\eta) \sin \eta \,\dd\eta .
\end{align}
Putting together \eqref{Veq-NF-3} and \eqref{Veq-NF-4} allows to get
\begin{align}\label{eq:V3+V4}
  V_3(0,h) + V_4(0,h) = - \sum_{n\geqslant 1} a_n\, \alpha_{nm,b} \,nm \sin(nm\theta).
\end{align}
Consequently, collecting equalities \eqref{F-Lin-0}, \eqref{Veq-NF-1}, \eqref{Veq-NF-2}, \eqref{eq:V3+V4}, we  obtain
\begin{equation}\label{eq:F-lin}
  \partial_r F(\Omega,0)h(\theta) = - \sum_{n=1}^\infty a_n \big(\Omega+V_1(0)+\alpha_{nm,b}\big) nm \sin(nm\theta).
\end{equation}
In light of Lemma \ref{lem:Omg-m}-(i), the map $m\mapsto \Omega_{m,b}^\alpha = -V_1(0)-\alpha_{m,b}$ is strictly increasing in the considered cases.
Hence, the kernel of $\partial_r F(\Omega,0)$ is nontrivial if and only if there exists $\ell\in \NN^+$ such that
\begin{align*}
  \Omega=-V_1(0)-\alpha_{\ell m,b} = \Omega_{\ell m,b}^{\alpha}.
\end{align*}
Moreover, the kernel of $\partial_r F(\Omega_{\ell m,b}^\alpha,0)$  is one-dimensional vector space generated by the function $\theta\mapsto \cos(\ell m \theta)$,
as desired.

\textbf{(ii)}
Now we intend to show that for any $m,\ell \geqslant  1$ the range $R(\partial_rF(\Omega_{\ell m,b}^{\alpha},0))$
coincides with the subspace
\begin{align}\label{range-diff}
  Z_{\ell m}\triangleq \Big\{f\in C^{1-\alpha}(\mathbb{T}):\, f(\theta)=\sum_{n\geqslant 1\atop n\ne \ell}b_n\sin (nm\theta),\, b_n\in \RR,
  \, \theta\in\mathbb{T}\Big\}.
\end{align}
Note that this sub-space is closed and of co-dimension one in the ambient space $Y_m$.
In addition, one can easily deduce from \eqref{eq:F-h-general} the trivial inclusion
$R(\partial_rF(\Omega_{\ell m,b}^\alpha,0))\subset Z_{\ell m}$,
and therefore it remains to show the converse.
For this purpose, let $f\in Z_{\ell m}$, we shall try to find a pre-image $h\in X_m$ satisfying
$\partial_r F(\Omega^\alpha_{\ell m,b},0)(h) = f$.
From the relation \eqref{eq:F-lin}, it reduces to
\begin{align*}
  a_n \big(\Omega_{\ell m,b}^\alpha - \Omega_{nm,b}^\alpha \big)n m = b_n,\quad \forall n\geqslant 1, n\neq \ell.
\end{align*}
This uniquely determines the sequence $\{a_n\}_{n\geqslant 1,n\neq \ell}$ with
\begin{align*}
  a_n = \frac{b_n}{nm(\Omega_{\ell m,b}^\alpha - \Omega_{n m,b}^\alpha)},\quad \forall n\geqslant 1, n\neq \ell.
\end{align*}
However, the coefficient $a_\ell$ is free and we can take it to be zero.
Then, for $\theta\mapsto f(\theta) = \sum\limits_{n=1\atop n\ne \ell}^\infty b_n \sin (nm\theta) \in Y_m$, in order to show $h\in X_m$, %
we only need to prove that
\begin{equation*}
  \theta\mapsto \sum_{n=1\atop n\ne \ell}^{\infty}\frac{b_n}{nm(\alpha_{\ell m,b} -\alpha_{nm,b})} \cos(nm\theta) \in C^{2-\alpha}(\TT),
\end{equation*}
or  equivalently
\begin{align*}
  \theta\mapsto H(\theta)\triangleq \sum_{n\geqslant \ell +1}^{\infty}\frac{b_n}{n(\alpha_{\ell m,b} -\alpha_{nm,b})} \cos(nm\theta) \in C^{2-\alpha}(\TT).
\end{align*}
We shall skip the proof of this point because it  is quite similar to  that of Proposition 8-(2) in \cite{HH15}. We use essentially the same arguments together with the asymptotic structure \eqref{eq:alpha_m^1_bound}.

\textbf{(iii)}
According to the continuity	property of the second derivative $\partial_{\Omega}\partial_{r}F$,
the transversality assumption reduces to
\begin{align*}
  \partial_{\Omega}\partial_{r}F(\Omega,0)(h)\left|_{\Omega=\Omega_{\ell m,b}^{\alpha},h=\cos(\ell m\theta)}\right.
  \notin R(\partial_rF(\Omega_{\ell m,b}^{\alpha},0)).
\end{align*}
This is indeed obvious by virtue of \eqref{range-diff}, due to that
\begin{align*}
  \partial_{\Omega}\partial_rF(\Omega_{\ell m,b}^\alpha,0)\cos(\ell m\theta) = -\ell m\sin(\ell m\theta)
  \notin R(\partial_rF(\Omega_{\ell m,b}^{\alpha},0))=Z_{\ell m}.
\end{align*}
This achieves the proof of Proposition \ref{prop:other-condition}.


\section{Appendix}\label{sec:appendix}
We intend to recall  some tools used in the paper and discuss the proofs of some results established before. The first result concerns the classical Crandall-Rabinowitz theorem on the bifurcation from simple eigenvalues which can be stated as follows, see \cite{C-R71}.
\begin{theorem}[Crandall-Rabinowitz theorem]\label{thm:C-R}
Let $X$ and $Y$ be two Banach spaces, $V$ a neighborhood of $0$ in $X$ and let
$F : \RR \times V \to Y$ be with the following  properties:
\begin{enumerate}
\item $F (\lambda, 0) = 0$ for any $\lambda\in \RR$.
\item The partial derivatives $F_\lambda$, $F_x$ and $F_{\lambda x}$ exist and are continuous.
\item $N(\mathcal{L}_0)$ and $Y/R(\mathcal{L}_0)$ are one-dimensional.
\item {\it Transversality assumption}: $F_{tx}(0, 0)x_0 \not\in R(\mathcal{L}_0)$, where
\begin{align*}
  N(\mathcal{L}_0) = span\{x_0\}, \quad \mathcal{L}_0\triangleq \partial_x F(0,0).
\end{align*}
\end{enumerate}
If $Z$ is any complement of $N(\mathcal{L}_0)$ in $X$, then there is a neighborhood $U$ of $(0,0)$ in $\RR \times X$,
an interval $(-a,a)$, and continuous functions $\varphi: (-a,a) \to \RR$, $\psi: (-a,a) \to Z$ such that $\varphi(0) = 0$, $\psi(0) = 0$ and
\begin{align*}
  F^{-1}(0)\cap U=\Big\{\big(\varphi(\xi), \xi x_0+\xi\psi(\xi)\big)\,:\,\vert \xi\vert<a\Big\}
  \cup\Big\{(\lambda,0)\,:\, (\lambda,0)\in U\Big\}.
\end{align*}
\end{theorem}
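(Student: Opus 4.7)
The plan is to perform a Lyapunov--Schmidt reduction followed by two applications of the implicit function theorem. First I would pick a closed complement $Z$ of $N(\mathcal L_0)=\mathrm{span}\{x_0\}$ in $X$ (available because $N(\mathcal L_0)$ is one-dimensional), and a one-dimensional complement $W$ of $R(\mathcal L_0)$ in $Y$; since $R(\mathcal L_0)$ has finite codimension it is closed, and the projections $P:Y\to R(\mathcal L_0)$ along $W$ and $Q=I-P:Y\to W$ are continuous. The key device for handling the limited regularity of $F$ (only $F_\lambda$, $F_x$ and $F_{\lambda x}$ are continuous, $F_{xx}$ is not assumed to exist) is to parametrize candidate nontrivial solutions as $x=\xi(x_0+w)$ with $w\in Z$ and to rescale the nonlinearity through
\begin{equation*}
H(\lambda,\xi,w):=\int_0^1 F_x\bigl(\lambda,t\xi(x_0+w)\bigr)(x_0+w)\,\dd t,
\end{equation*}
so that $F(\lambda,\xi(x_0+w))=\xi\,H(\lambda,\xi,w)$ by applying the fundamental theorem of calculus to $t\mapsto F(\lambda,t\xi(x_0+w))$ and using $F(\lambda,0)\equiv 0$. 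Thus the nontrivial zeros of $F$ near $(0,0)$ correspond to zeros of $H$, and $H$ is continuous in $(\lambda,\xi,w)$ with continuous $\partial_\lambda H$ (this is exactly where continuity of $F_{\lambda x}$ enters) and continuous $\partial_w H(\lambda,\xi,w)h=F_x(\lambda,\xi(x_0+w))h$ for $h\in Z$.

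Next I would split $H(\lambda,\xi,w)=0$ into the pair $PH=0$ and $QH=0$. At the base point $H(0,0,0)=\mathcal L_0 x_0=0$, and $\partial_w(PH)(0,0,0)=\mathcal L_0|_Z:Z\to R(\mathcal L_0)$ is an isomorphism (injective because $Z\cap N(\mathcal L_0)=\{0\}$, surjective by the definition of $R(\mathcal L_0)$). An implicit function theorem in the form that only requires continuity of $H$ together with continuity and invertibility of $\partial_w H$ then yields a continuous map $(\lambda,\xi)\mapsto w(\lambda,\xi)\in Z$ defined near $(0,0)$ with $w(0,0)=0$; the continuity of $\partial_\lambda H$ further supplies a continuous $\partial_\lambda w=-(\partial_w PH)^{-1}\partial_\lambda PH$.

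Substituting $w$ into the $W$-valued equation produces a scalar equation $\Psi(\lambda,\xi):=QH(\lambda,\xi,w(\lambda,\xi))=0$. A short chain-rule computation using $w(0,0)=0$, $\partial_\lambda H(0,0,0)=F_{\lambda x}(0,0)x_0$ and $Q\mathcal L_0=0$ shows
\begin{equation*}
\Psi(0,0)=0,\qquad \partial_\lambda\Psi(0,0)=Q\,F_{\lambda x}(0,0)\,x_0,
\end{equation*}
and the latter is nonzero in the one-dimensional space $W$ precisely by the transversality assumption. A second, scalar, application of the implicit function theorem then produces a continuous $\varphi:(-a,a)\to\mathbb R$ with $\varphi(0)=0$ solving $\Psi(\varphi(\xi),\xi)=0$ uniquely near $(0,0)$. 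Setting $\psi(\xi):=w(\varphi(\xi),\xi)\in Z$ gives a continuous curve with $\psi(0)=0$ such that $F\bigl(\varphi(\xi),\xi x_0+\xi\psi(\xi)\bigr)=0$ on $(-a,a)$; combined with the trivial branch $\{(\lambda,0)\}$ and with the local uniqueness supplied by the two IFT steps, this produces the claimed description of $F^{-1}(0)\cap U$.

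The main delicate point will be the low-regularity bookkeeping. Because $F_{xx}$ is not assumed to exist, one cannot simply apply IFT to the unscaled map $G(\lambda,\xi,z)=PF(\lambda,\xi x_0+z)$ and then factor out a power of $\xi$ from the leftover $Q$-equation, since producing a continuous $\partial_\lambda$ on the factored scalar equation would normally require differentiating $F$ twice in $x$. The integral representation of $H$ above is precisely what converts the problematic quotient $\xi^{-1}F(\lambda,\xi(x_0+w))$ into a function whose relevant partial derivatives involve only $F_x$ and $F_{\lambda x}$, so both implicit function theorem applications genuinely run under the stated hypotheses; verifying this rigorously, and in particular checking that the partial IFT used in the first step delivers enough regularity of $w$ to make $\partial_\lambda\Psi$ continuous at the origin, will be the technical heart of the argument.
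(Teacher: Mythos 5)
The paper does not prove this theorem; it is stated in the Appendix as a recalled tool and attributed to Crandall and Rabinowitz \cite{C-R71}, so there is no internal argument to compare against. That said, your sketch reproduces, essentially verbatim, the original Lyapunov--Schmidt proof from \cite{C-R71}: rescaling the unknown as $x=\xi(x_0+w)$, replacing $\xi^{-1}F(\lambda,\xi(x_0+w))$ by the integral representation $H(\lambda,\xi,w)=\int_0^1 F_x(\lambda,t\xi(x_0+w))(x_0+w)\,\dd t$ so that only $F_x$ and $F_{\lambda x}$ are differentiated, applying the implicit function theorem to $PH=0$ to eliminate $w$, and then applying a scalar implicit function theorem to $QH=0$ using the transversality assumption to produce $\varphi(\xi)$. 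Your computation $\partial_w H(\lambda,\xi,w)h=F_x(\lambda,\xi(x_0+w))h$ is correct (it is cleanest to verify it from the quotient form for $\xi\ne0$ and pass to the limit), and the claim that $R(\mathcal{L}_0)$ is closed is also correct (a range of a bounded operator with finite-dimensional algebraic complement is automatically closed, by the open mapping theorem applied to $(X/\ker\mathcal L_0)\times W\to Y$).

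The one place you gloss over something that genuinely needs an argument is the final sentence asserting the full description of $F^{-1}(0)\cap U$. The parametrization $x=\xi(x_0+w)$ with $|\xi|$ and $\|w\|$ small does not a priori sweep out a full neighborhood of $0$ in $X$: any nonzero $x\in Z$ cannot be written this way with small $w$. To conclude that the two branches exhaust $F^{-1}(0)\cap U$ one must rule out zeros of $F$ that lie ``too close to $Z$''. The standard way to do this (and what \cite{C-R71} implicitly does) is to first solve the unscaled projected equation $PF(\lambda,sx_0+z)=0$ for $z=z(\lambda,s)\in Z$ by the implicit function theorem using only continuity of $F$ and $F_x$; uniqueness there shows every small zero with $x=sx_0+z$ has $z=z(\lambda,s)$, and since $z(\lambda,0)\equiv0$ and $z$ is $C^1$ in $s$ with $\partial_s z(0,0)=0$, one gets $z(\lambda,s)=s\,w(\lambda,s)$ with $w$ continuous and small. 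Only after this does the rescaled $H$ take over to resolve the $Q$-equation. Your sketch would be complete once this exhaustiveness step is inserted; everything else is sound and matches the reference the paper cites.
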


The next result deals with the asymptotic growth of the normalized eigenfunctions to the spectral Laplacian in bounded smooth domains.
The proof can be deduced from the standard elliptic estimates (see for instance Section 6.3 of \cite{Evans10})
and we here omit the details.
\begin{lemma}\label{lem:phi}
Let $\mathbf{D}$ be a smooth domain and $\{\phi_j,j\geqslant 1\}$ be the orthonormal basis of $L^2(\mathbf{D})$ of eigenfunctions of $-\Delta_{\mathbf{D}}$ satisfying the constraints  \eqref{def:phi-j}. Then we have
\begin{equation*}
  \lVert \phi_{j}\rVert_{H^{2n}(\mathbf{D})}\leqslant C_n(1+\lambda_j)^n,
  \quad\forall n\in \mathbb{N},
\end{equation*}
where $C_n>0$ is a constant independent of $j$ but may depend on $n$.
\end{lemma}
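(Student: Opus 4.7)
The plan is to prove the estimate by induction on $n$, leveraging the classical $L^2$-based elliptic regularity theory for the Dirichlet Laplacian on the smooth bounded domain $\mathbf{D}$. The base case $n=0$ is nothing but the normalization condition $\|\phi_j\|_{L^2(\mathbf{D})}=1$ built into \eqref{def:phi-j}, so there is nothing to prove.

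For the inductive step, the central tool I will invoke is the following standard a priori estimate for the Dirichlet problem on smooth bounded domains (see e.g.\ Theorem~5 of Section~6.3 in \cite{Evans10}): if $u\in H^1_0(\mathbf{D})$ is a weak solution of $-\Delta u = f$ with $f\in H^k(\mathbf{D})$, $k\in \NN$, then $u\in H^{k+2}(\mathbf{D})$ and
\begin{equation*}
  \|u\|_{H^{k+2}(\mathbf{D})} \leqslant C_k\bigl(\|f\|_{H^{k}(\mathbf{D})} + \|u\|_{L^2(\mathbf{D})}\bigr),
\end{equation*}
where $C_k>0$ depends only on $k$ and $\mathbf{D}$. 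Applying this to $u=\phi_j$, which solves $-\Delta \phi_j = \lambda_j \phi_j$ with $\phi_j\in H^1_0(\mathbf{D})$, and choosing $k=2n$, I will combine it with the inductive hypothesis $\|\phi_j\|_{H^{2n}}\leqslant C_n(1+\lambda_j)^n$ and $\|\phi_j\|_{L^2}=1$ to obtain
\begin{equation*}
  \|\phi_j\|_{H^{2n+2}} \leqslant C\bigl(\lambda_j \|\phi_j\|_{H^{2n}} + 1\bigr)
  \leqslant C\,C_n\,\lambda_j(1+\lambda_j)^n + C \leqslant C_{n+1}(1+\lambda_j)^{n+1},
\end{equation*}
which closes the induction and yields the announced bound with a constant depending on $n$ but uniform in $j$.

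The only point that requires a touch of care is the compatibility between the iterative use of elliptic regularity and the boundary behavior of $\phi_j$: the estimate used above propagates to arbitrarily high Sobolev orders precisely because $\partial\mathbf{D}$ is $C^\infty$, so no extra boundary compatibility conditions on $\Delta^k \phi_j$ need be imposed along the way. Apart from this standard remark, the argument is essentially automatic, and the explicit linear dependence on $\lambda_j$ at each step is exactly what produces the polynomial growth $(1+\lambda_j)^n$ on the right-hand side. Hence I do not anticipate a genuine obstacle; the content of the lemma lies entirely in packaging the iterated elliptic estimate with the correct tracking of the spectral parameter $\lambda_j$.
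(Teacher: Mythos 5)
Your proof is correct and follows exactly the route the paper indicates: the authors state that the lemma "can be deduced from the standard elliptic estimates (see for instance Section 6.3 of \cite{Evans10})" and omit the details, and your induction on $n$ via the Dirichlet a priori estimate from Evans, Theorem 5 of Section 6.3, is precisely that argument with the spectral parameter tracked carefully.
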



The current purpose is to prove Lemma $\ref{lem:estimate_H}$ following  the ideas developed in estimating (36) of
Constantin and Ignatova \cite{CI16a}.
\begin{proof}[Proof of Lemma $\ref{lem:estimate_H}$]
We take two points $(\bar{x},y) \in \mathbf{D} \times \mathbf{D}$,
and we consider $x\in B(\bar{x},\tfrac{\delta}{2})$, where $\delta>0$
is defined as
\begin{align*}
  \delta : =
  \begin{cases}
    \frac{d(\bar{x})}{8},\quad & \textrm{if}\;\; |\bar{x}-y|> \frac{d(\bar{x})}{4}, \\
    \frac{d(\bar{x})}{2}, \quad & \textrm{if}\;\; |\bar{x}-y|\leqslant  \frac{d(\bar{x})}{4}.
  \end{cases}
\end{align*}
 Fix $(\bar{x},y)$ and take the function $(t,z)\mapsto h(t,z) = H_\mathbf{D}(t,z,y)$, with $H_{\mathbf{D}}$ the heat kernel defined \mbox{by \eqref{Heat-Str1}.}
Now, we apply Green's formula on the domain $U = B(\bar{x},\delta)\times (0,t)$ to obtain (denote $\mathbf{n}$ as the outer normal vector of $\partial B(\bar{x},\delta)$)
\begin{align*}
  0 & = \int_U \Big[\left((\partial_s-\Delta_z)h(s ,z)\right)G_{t-s}(x-z) + h(s,z)(\partial_s + \Delta_z)G_{t-s}(x-z) \Big] \dd z \dd s \\
  &= h(t,x)-G_t(x-y) + \int_0^t\int_{\partial B(\bar{x},\delta)}\left[\frac{\partial G_{t-s}(x-z)}{\partial \mathbf{n}}h(s,z)-
  \frac{\partial h(s,z)}{\partial \mathbf{n}}G_{t-s}(x-z)\right] \dd \sigma(z) \dd s,
\end{align*}
which leads to
\begin{align*}
  H_\mathbf{D}(t,x,y) = G_t(x-y) - \int_0^t\int_{\partial B(\bar{x},\delta)}
  \left[\frac{\partial G_{t-s}(x-z)}{\partial \mathbf{n}}h(s,z) - \frac{\partial h(s,z)}{\partial \mathbf{n}}G_{t-s}(x-z)\right]\dd \sigma(z) \dd s.
\end{align*}
By differentiating $n$-times in $x$ in the above formula, and using the upper bounds \eqref{eq:H_D-es0}-\eqref{eq:H_D-es1},
we have that for every $0<t\leqslant T_0$,
\begin{align*}
  & \quad |\nabla^n_x H_\mathbf{D}(t,x,y)- \nabla^n_x G_t(x-y)| \\
  &\leqslant  C\int_0^t\int_{\partial B(\bar{x},\delta)} (t-s)^{-\frac{d+n+1}{2}} p_{n+1}\Big(\tfrac{|x-z|}{\sqrt{t-s}}\Big)
  e^{-\frac{|x-z|^2}{4(t-s)}}s^{-\frac{d}{2}}e^{-\frac{|y-z|^2}{C s}}\dd z\dd s \\
  &\quad + C \int_0^{\min\{t, d(y)^2\}}\int_{\partial B(\bar{x},\delta)} (t-s)^{-\frac{d+n}{2}}p_n\Big(\tfrac{|x-z|}{\sqrt{t-s}}\Big)
  e^{-\frac{|x-y|^2}{4(t-s)}}s^{-\frac{d+1}{2}}p_1\Big(\tfrac{|y-z|}{\sqrt{s}}\Big) e^{-\frac{|y-z|^2}{C s}}\dd z\dd s \\
  & \quad + C \int_{\min\{t, d(y)^2\}}^t\int_{\partial B(\bar{x},\delta)}
  (t-s)^{-\frac{d+n}{2}}p_n\Big(\tfrac{|x-z|}{\sqrt{t-s}}\Big)
  e^{-\frac{|x-y|^2}{4(t-s)}}s^{-\frac{d}{2}}\tfrac{1}{d(y)}\tfrac{\phi_1(y)}{|y-z|}e^{-\frac{|y-z|^2}{C s}}\dd z \dd s,
\end{align*}
where $p_k(\xi)$ are  polynomials of degree $k$.
These integrals are all nonsingular.
Note that since in the first integral of the right hand side one has the restriction $|x-z|\ge \frac{\delta}{2}$, then by elementary arguments we deduce that
\begin{align*}
  (t-s)^{-\frac{k}{2}}p_l\big( \tfrac{|x-z|}{\sqrt{t-s}}\big)&\leqslant C |x-z|^{-k} e^{\frac{|x-z|^2}{8(t-s)}}\\
  & \leqslant C \delta^{-k} e^{\frac{|x-z|^2}{8(t-s)}}.
\end{align*}
Similarly, by noting that
\begin{align*}
  |y-z|\geqslant
  \begin{cases}
    |\bar{x}-y|-\delta \geqslant \delta,\quad & \textrm{if}\;\; |\bar{x}-y|> \frac{d(\bar{x})}{4}, \\
    |\bar{x}-z| - |\bar{x}-y| \geqslant \frac{\delta}{2}, \quad & \textrm{if}\;\; |\bar{x}-y|\leqslant \frac{d(\bar{x})}{4},
  \end{cases}
\end{align*}
we obtain
\begin{align*}
s^{-\frac{k}{2}} p_l\big(\tfrac{|y-z|}{\sqrt{s}}\big)&\leqslant C |y-z|^{-k} e^{\frac{|y-z|^2}{2Cs}}\\
&\leqslant C \delta^{-k} e^{\frac{|y-z|^2}{2Cs}}.
\end{align*}
On the other hand, it is known that the first eigenfunction  satisfies
\begin{align*}
  0\leqslant  \phi_1(y) \leqslant C_0 d(y),
\end{align*}
with $C_0>0$ a constant depending on $\mathbf{D}$. Combined with the inequality
\begin{align*}
  \tfrac{|x-y|^2}{t} \leqslant  2\big(\tfrac{|x-z|^2}{t-s} + \tfrac{|y-z|^2}{s}\big),
\end{align*}
it follows that
\begin{align*}
  \tfrac{\phi_1(y)}{|y-z|d(y)}\leqslant C\delta^{-1}.
\end{align*}
Putting together the above estimates and
\begin{align*}
  e^{-\frac{|x-z|^2}{8(t-s)}-\frac{|y-z|^2}{2Cs}}\leqslant e^{-\frac{|x-y|^2}{\widetilde{C}t}},\quad
  \widetilde{C}\triangleq  16 + 4C,
\end{align*}
yields for every $0< t\leqslant \min\{T_0,d(x)^2\}$,
\begin{align*}
  |\nabla_x^n H_\mathbf{D}(x,y,t)-\nabla_x^n G_{t}(x-y)|\leqslant Ce^{-\frac{|x-y|^2}{\widetilde{C}t}}t\delta^{-d-n-2}
  \leqslant Ct^{-\frac{d+n}{2}}e^{-\frac{|x-y|^2}{\widetilde{C} t}},
\end{align*}
where in the last inequality we have used the fact that $\delta \approx d(\bar{x}) \approx d(x)$. Thus the desired estimate follows from the estimate of the heat kernel in the full plane $G_{t},$ which is easy to check.
\end{proof}

The next goal is to establish the proof of Lemma  \ref{lem:es-kern}.
\begin{proof}[Proof of Lemma $\ref{lem:es-kern}$]
  We borrow the idea from the treatment  of (171) in \cite{CI16a}.
For $x$ and $y$ fixed, there exits an open domain $\mathbf{D}_0 \subset\overline{\mathbf{D}_0}\subset\mathbf{D}$ such that $x,y\in \mathbf{D}_0$.
Denote \mbox{by $d_0 \triangleq \min\{\sqrt{T_0},d({\mathbf{D}_0},\partial \mathbf{D})\}>0$.}
Let $\chi\in C^\infty$ be a cutoff function such that $\chi\equiv 1$ on
$\big\{z| d(z,{\mathbf{D}_0}) \leqslant \frac{1}{3} d_0\big\}$ and
$\chi \equiv 0$ on $\big\{z|d(z,{\mathbf{D}_0})  \geqslant \frac{1}{2} d_0\big\}$.
Observe that
$ \widetilde{h}(t,z) = \chi(z)G_t(z-y)$
solves the following equations
\begin{align*}
  (\partial_t-\Delta)\widetilde{h}(t,z) & = -\big[(\Delta\chi(z))G_t(z-y) +2(\nabla\chi(z))\cdot\nabla G_t(z-y)\big]
  \triangleq F(t,z,y), \\
  \widetilde{h}(0,z) & = \chi(z) \delta(z-y), \\
  \widetilde{h}(t,z)|_{\partial \mathbf{D}} & =0,
\end{align*}
with $\delta(\cdot)$ the Dirac mass  centered at $0$.
Thus Duhamel's formula gives
\begin{align*}
  \widetilde{h}(t,z) = e^{t\Delta} \widetilde{h}(0,z) + \int_0^t e^{(t-s)\Delta}F(s,z,y)\dd s,
\end{align*}
which, in combination with $(e^{t\Delta}f)(z) = \int_\mathbf{D} H_\mathbf{D}(t,z,w)f(w)\dd w$ yields
\begin{align*}
  \chi(z)G_t(z-y) = \chi(y)H_\mathbf{D}(t,z,y) + \int_0^t\int_\mathbf{D} H_\mathbf{D}(t-s,z,w) F(s,w,y) \dd w\dd s
\end{align*}
for all $z\in \mathbf{D}$. Noting that $\chi(x)=\chi(y) =1$, and setting $z=x$, we obtain
\begin{equation}\label{eq:H-G-eq-2}
  H_\mathbf{D}(t,x,y) - G_t(x-y) =  \int_0^t\int_\mathbf{D} H_\mathbf{D}(t-s,x,w)
  \big[\Delta\chi(w)G_s(w-y) +2\nabla\chi(w)\cdot\nabla G_s(w-y)\big]\dd w\dd s.
\end{equation}
The integral on the right-hand side is not singular and indeed $C^\infty$-smooth.
In fact, notice that the fact
$\mathrm{supp}\, \nabla\chi \subset \{w| \frac{1}{3}d_0 \leqslant d(w,{\mathbf{D}_0}) <\frac{1}{2}d_0\}$
ensures $\frac{1}{3}d_0\leqslant |x-w|,|y-w|\leqslant C$, and by using the Gaussian upper bounds combined with Lemma \ref{lem:estimate_H},
we have that for every $0\leqslant t\leqslant d_0^2$,
\begin{align*}
  & \quad \left|\nabla_x^k \nabla_y^l \Big( \int_0^t \int_\mathbf{D} H_\mathbf{D}(t-s,x,w) F(s,w,y)\,\dd w \dd s\Big)\right| \\
  & \leqslant  C \int_0^t \int_{\frac{d_0}{3}\leqslant d(w,\mathbf{D}_0)<\frac{d_0}{2}} (t-s)^{-\frac{k+d}{2}}
  e^{-\frac{|x-w|^2}{C(t-s)}}  \Big(|\nabla^l_y G_s(w-y)| + |\nabla^{l+1}_y G_s(w-y)|\Big)\dd w \dd s \leqslant C,
\end{align*}
as desired.
\end{proof}

\vskip5mm


\bibliographystyle{plain}

	\end{document}